\documentclass[12pt]{amsart}

\usepackage{amssymb, amscd, txfonts}
\usepackage{graphicx}
\usepackage[all]{xy} 


\numberwithin{equation}{section}

\sloppy

\newtheorem{theorem}{Theorem}[section]
\newtheorem{proposition}[theorem]{Proposition}
\newtheorem{lemma}[theorem]{Lemma}

\theoremstyle{definition}
\newtheorem{definition}[theorem]{Definition}
\newtheorem{example}[theorem]{Example}

\theoremstyle{remark}
\newtheorem{remark}[theorem]{Remark}


\newcommand{\Z}{\mathbb{Z}}
\newcommand{\Q}{\mathbb{Q}}
\newcommand{\R}{\mathbb{R}}
\newcommand{\C}{\mathbb{C}}

\newcommand{\proj}{{\mathbb P}}
\newcommand{\cube}{\square}

\newcommand{\D}{\mathcal{D}}
\newcommand{\G}{\Gamma}
\newcommand{\LL}{\mathcal{L}}
\newcommand{\E}{\mathcal{E}}
\newcommand{\El}{\mathcal{E}_{\lambda}}
\newcommand{\Sym}{{\rm Sym}}
\newcommand{\prim}{{\rm prim}}
\newcommand{\AG}{\mathcal{A}_{\Gamma}}
\newcommand{\AGast}{\mathcal{A}_{\Gamma}^{\ast}}
\newcommand{\AGtor}{\mathcal{A}_{\Gamma}'}
\newcommand{\AI}{\mathcal{A}_{I}}
\newcommand{\AItilde}{\widetilde{\mathcal{A}}_{I}}
\newcommand{\VI}{\mathcal{V}_{I}}
\newcommand{\XG}{\mathcal{X}_{\Gamma}}
\newcommand{\XGtor}{\mathcal{X}_{\Gamma}'}
\newcommand{\St}{{\rm St}}

\begin{document}

\title[]{Siegel modular forms arising from higher Chow cycles}
\author[]{Shouhei Ma}
\thanks{Supported by KAKENHI 21H00971} 
\address{Department~of~Mathematics, Science~Institute~of~Tokyo, Tokyo 152-8551, Japan}
\email{ma@math.titech.ac.jp}
\keywords{} 

\begin{abstract}
We prove that the infinitesimal invariant of a higher Chow cycle of type $(2, 3-g)$ on 
a generic abelian variety of dimension $g\leq 3$ gives rise to a meromorphic Siegel modular form 
of (virtual) weight ${\Sym}^4\otimes \det^{-1}$ with bounded singularity, 
and that this construction is functorial with respect to rank $1$ degeneration, 
namely the K-theory elevator for the cycle corresponds to the Siegel operator for the modular form. 
\end{abstract}

\maketitle

\section{Introduction}\label{sec: intro}

The theory of modular forms and that of algebraic cycles 
have a common origin in the study of Gauss, Abel and Jacobi on the Lemniscate. 
Both theories have developed vastly in this two hundred years, 
and accordingly, they have gradually separated, as is usual with the history of Mathematics. 
In the modular form side, one direction of development was 
\begin{equation*}
\textrm{one variable} \: \rightsquigarrow \: \textrm{several variables} \: \rightsquigarrow \: \textrm{vector-valued}
\end{equation*}  
The most well-understood case in this direction is vector-valued Siegel modular forms;  
see \cite{vdG} for a survey. 
On the other hand, in the cycle side, the object of study has grown as  
\begin{equation*}
\textrm{divisors} \: \rightsquigarrow \: \textrm{algebraic cycles} \: \rightsquigarrow \: \textrm{higher Chow cycles} 
\end{equation*}  
The last one, higher Chow cycles, were introduced by Bloch \cite{Bl1} 
as the cycle-theoretic incarnation of higher algebraic K-theory. 

Our purpose in this paper is to show that even at this stage, the two branches are still connected in low dimension. 
In order to ease the exposition, let us work in an (over)simplified setting, 
referring the reader to later sections for the full version. 
Let ${\AG}={\D}/{\G}$ be a Siegel modular variety of genus $g\leq 3$, where the arithmetic group ${\G}$ does not contain $-1$. 
Let $\pi\colon {\XG}\to {\AG}$ be the universal family of abelian varieties over ${\AG}$. 
Our starting point is the following observation. 

\begin{theorem}[Theorem \ref{thm: main construction}]\label{thm: intro main construction}
Let $U$ be a Zariski open set of ${\AG}$ and 
$Z$ be a family of higher Chow cycles of type $(2, 3-g)$ on $\pi^{-1}(U)\to U$,  
nullhomologous when $g=3$. 
Then the primitive infinitesimal invariant of $Z$ 
gives a meromorphic Siegel modular form $f_{Z}$ of weight ${\Sym}^4\otimes \det^{-1}$ 
which is holomorphic over $U$ and has at most pole of order $1$ along the complement of $U$. 
When $g=1$, the weight $3$ modular form $f_{Z}$ is holomorphic at the cusps. 
\end{theorem}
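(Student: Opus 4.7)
The plan is to pass from the family $Z$ to a higher normal function, extract its infinitesimal invariant as a section of an explicit automorphic bundle on $\AG$, and then control the boundary behaviour along $\AG \setminus U$.

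First, I would associate a normal function to $Z$. Applying Beilinson's regulator gives a class
\[
\mathrm{reg}(Z) \in H^{g+1}_{\mathcal{D}}(\pi^{-1}(U), \Q(2)),
\]
using that $2p-n = g+1$ for $(p, n) = (2, 3-g)$. The Leray spectral sequence for $\pi$ combined with the Hodge-theoretic splittings of $R^{\bullet} \pi_{\ast}$ then encodes $\mathrm{reg}(Z)$ as a higher normal function $\nu_Z$ on $U$ with values in the family of generalized intermediate Jacobians attached to $R^g \pi_{\ast}$. The nullhomology hypothesis for $g=3$ is needed precisely to ensure that $\mathrm{reg}(Z)$ descends to such a normal function rather than leaving a nonzero residue in a lower step of the Leray filtration.

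Second, I apply the Griffiths infinitesimal invariant construction. The Gauss--Manin connection $\nabla$ produces
\[
\delta \nu_Z \in \frac{(F^1/F^2) R^g \pi_{\ast} \C \otimes_{\mathcal{O}_U} \Omega^1_U}{\nabla(F^2 R^g \pi_{\ast} \C)},
\]
and I then take its image in the primitive part. Using the Torelli identification $\Omega^1_{\AG} \simeq \Sym^2 \mathbb{E}$, the formula $\mathrm{gr}^p_F R^g \pi_{\ast} \C = \wedge^p \mathbb{E} \otimes \wedge^{g-p} \mathbb{E}^{\vee}$, and the Lefschetz decomposition, $\delta^{\prim} \nu_Z$ becomes a section of a subquotient of
\[
\mathbb{E} \otimes (\wedge^{g-1} \mathbb{E}^{\vee})_{\prim} \otimes \Sym^2 \mathbb{E}.
\]
Decomposing this bundle via Pieri/Schur--Weyl and exploiting the fact that the Griffiths derivative is symmetric in the Kodaira--Spencer directions, I would isolate the $\Sym^4 \mathbb{E} \otimes (\det \mathbb{E})^{-1}$ isotypic component. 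A section of this bundle is by definition a meromorphic Siegel modular form of weight $\Sym^4 \otimes \det^{-1}$; for $g=1$ it reduces to $\mathbb{E}^{\otimes 3}$, i.e.\ classical weight $3$.

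Third, holomorphy on $U$ is automatic since every step above is algebraic in $U$. Along a codimension-one component of $\AG \setminus U$, admissibility of $\nu_Z$ (Steenbrink--Zucker, M.~Saito) extends the Hodge bundles to their Deligne canonical extensions and $\nabla$ to its logarithmic version; tracking the local boundary equation through the identifications above then bounds the pole order of $f_Z$ by $1$. For $g=1$ the cusps lie outside the interior of $\AG$ and require a separate argument: the $q$-expansion of $f_Z$ is controlled by the local monodromy around a cusp (a nodal degeneration of the elliptic fibre), and the resulting bound forces $f_Z$ to be holomorphic there.

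The step I expect to be most delicate is the representation-theoretic isolation of $\Sym^4 \mathbb{E} \otimes (\det \mathbb{E})^{-1}$: one has to check that every other Schur summand of $\mathbb{E} \otimes (\wedge^{g-1} \mathbb{E}^{\vee})_{\prim} \otimes \Sym^2 \mathbb{E}$ is killed either by the $\nabla F^2$ quotient, by the primitivity constraint, or by the symmetry built into $\delta$. A secondary technical point is getting the pole order exactly at $1$, rather than at some looser Hodge-theoretic bound; this should come down to careful bookkeeping of Tate twists and of the canonical extensions.
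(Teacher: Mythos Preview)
Your overall strategy---normal function, infinitesimal invariant as a section of the Koszul $H^1$, then admissibility to control the boundary---matches the paper's. Two points deserve sharpening.

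First, the representation-theoretic step is simpler than you fear. You frame it as isolating a $\Sym^4\mathcal{E}\otimes\mathcal{L}^{-1}$ summand inside a larger bundle and then arguing that the other summands die. In fact the primitive Koszul cohomology $H^1\mathcal{K}_g$ is \emph{equal} to $\Sym^4\mathcal{E}\otimes\mathcal{L}^{-1}$ as an automorphic bundle, not merely contains it. The paper checks this case by case for $g=1,2,3$ (Lemma~\ref{lem: H1Kg}): after twisting by $\mathcal{L}$ the complex becomes $(\cdots)\to\Sym^2\mathcal{E}\otimes\Sym^2\mathcal{E}\stackrel{\wedge}{\to}\wedge^2\Sym^2\mathcal{E}$, and the plethysm $\Sym^2\otimes\Sym^2=\Sym^4\oplus(\cdots)\oplus\wedge^2\Sym^2$ (with the $(\cdots)$ matching the incoming term) leaves exactly $\Sym^4$ in the middle. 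No projection or symmetry argument on $\delta$ is required.

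Second, the pole-order bound and the $g=1$ cusp holomorphy both rest on a mechanism you have not named. For the interior boundary $D_U=\AG\setminus U$, the logarithmic Koszul complex sits inside the ordinary one twisted by $\mathcal{O}(D_U)$, since $\Omega^1(\log D_U)\hookrightarrow\Omega^1\otimes\mathcal{O}(D_U)$; this is exactly where the pole of order $1$ comes from. For the cusps in $g=1$, your $q$-expansion/monodromy sketch is not the right lever. The point is that on the compactified modular curve one has $\Omega^1_{\mathcal{A}^\ast}(\log\Delta_{\mathcal{A}})\simeq\mathcal{L}^{\otimes 2}$, so the logarithmic Koszul $H^1$ over the cusps is already $\mathcal{L}^{\otimes 3}$ with \emph{no} extra twist by $\mathcal{O}(\Delta_{\mathcal{A}})$. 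Admissibility of $\nu_Z$ along the cusps then gives holomorphy of $\delta\nu_Z$ as a section of $\mathcal{L}^{\otimes 3}$ directly---the log pole has been absorbed into the modular-form line bundle, which is what distinguishes cusps from interior boundary components.
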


We say that this setting is oversimplified because 
usually a family of (higher) cycles can be defined only after taking an etale base change of 
the given family of varieties. 
Then we need to take symmetric polynomials along the fibers of the base change for obtaining Siegel modular forms. 
This produces Siegel modular forms of weight ${\Sym}^{4i}\otimes \det^{-i}$ for $i$ up to the degree of the base change. 
The full version (Theorem \ref{thm: main construction}) is formulated in this setting. 

In the case $g=1$, if the $(2, 2)$-cycles are defined on every elliptic curve, 
we obtain a holomorphic elliptic modular form of weight $3$. 
This weight is conveniently small but not very small. 
On the other hand, when $g\geq 2$, the basic theory of Siegel modular forms tells us that 
there is no holomorphic modular form of weight ${\Sym}^4\otimes \det^{-1}$. 
Thus, if $f_{Z}\not\equiv 0$, it must have a pole. 
In other words, if $Z$ has nontrivial primitive infinitesimal invariant, it cannot be extended over a divisor of ${\AG}$. 
This is an easy implication from modular forms to algebraic cycles. 

One might think, like myself when started this project, that
the same construction is possible for other types of cycles and arbitrary $g$ 
to produce plenty Siegel modular forms. 
Indeed, we have the infinitesimal invariant for any type of cycles. 
However, it turns out from the calculation of Nori (\cite{No} p.372) 
that the Koszul cohomology sheaf where the primitive infinitesimal invariant lives 
is nonvanishing only in the above cases, 
beyond the classical case of $0$-cycles (and divisors). 
I now understand that this is a sort of exceptional phenomenon in low dimension, 
like the surjectivity of the Torelli map in $g \leq 3$. 
Moreover, by some rigidity theorems in the theory of algebraic cycles, 
we find that only countably many modular forms can be obtained in this way (\S \ref{ssec: rigidity}). 
Thus these modular forms should be precious. 

Theorem \ref{thm: intro main construction} is not surprising 
if we 
view the Koszul complex as a complex of automorphic vector bundles. 
The boundary behavior of $f_{Z}$ is a consequence of 
the admissibility of the normal function of $Z$ (\cite{GG}, \cite{Sa}). 
Nevertheless, in view of the current distance of the two branches, 
it would not be entirely trivial to make this link explicit, 
which will serve as the basis of further investigation. 
In the case $g=3$, a similar observation was also made recently by Hain \cite{Hain3} 
in the context of Teichm\"uller modular forms. 
In a more primitive form, this type of observation was done for $K3$ surfaces in \cite{Ma1} \S 3.8. 

Our more practical effort in this paper is devoted to showing that 
the correspondence $Z\rightsquigarrow f_{Z}$ is functorial with respect to degeneration. 
Let $g\geq 2$. 
We choose a corank $1$ (i.e., maximal) cusp ${\AI}$ of ${\AG}$, 
which itself is a Siegel modular variety of genus $g-1$. 
In the modular form side, we have the \textit{Siegel operator} $\Phi_{I}$ 
which is a sort of restriction operator to ${\AI}$ (see \cite{We}, \cite{vdG}). 
This produces vector-valued Siegel modular forms on ${\AI}$ from those on ${\AG}$. 
We extend this operation to meromorphic modular forms (\S \ref{sec: Siegel operator}). 
On the other hand, we have the rank $1$ degeneration of the universal abelian variety 
over the toroidal boundary divisor over ${\AI}$. 
For a cycle-theoretic technical reason, 
we restrict ourselves to the case of irreducible degeneration (cf.~Remark \ref{remark: elevator with several components}). 
Then, for a cycle family $Z$ as above, 
we have a degeneration operation known as the \textit{K-theory elevator} (\cite{Co}, \cite{elevator}). 
In an (over)simplified setting, this produces a family $Z_{I}$ of higher Chow cycles on 
the $(g-1)$-dimensional universal family over ${\AI}$ 
whose K-theoretic degree increases by $1$, 
namely if $Z$ has type $(2, n)$ then $Z_{I}$ has type $(2, n+1)$. 

Our main result is that the K-theory elevator corresponds to the Siegel operator: 

\begin{theorem}[Theorem \ref{thm: elevator Siegel}]\label{thm: degeneration intro}
We have $\Phi_{I}f_{Z}=f_{Z_{I}}$ up to constant. 
\end{theorem}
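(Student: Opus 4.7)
The plan is to compute both sides of $\Phi_I f_Z = f_{Z_I}$ in explicit coordinates near the chosen corank $1$ cusp, and verify that they agree up to a universal constant. Fix a symplectic basis adapted to the isotropic flag defining the cusp, so points of $\D$ near the cusp are given by period matrices $\tau = \bigl(\begin{smallmatrix} \tau' & z \\ z^{t} & w \end{smallmatrix}\bigr)$ with $\tau'$ in the genus $g-1$ Siegel domain, $z \in \C^{g-1}$, and $w$ a single complex coordinate satisfying $\mathrm{Im}\, w \to \infty$ at the cusp. In these coordinates, $\Phi_I f(\tau')$ is extracted from $\lim_{\mathrm{Im}\, w \to \infty} f(\tau)$ by projecting onto the $GL_{g-1}$-subrepresentation of $\Sym^{4}\otimes \det^{-1}$ of appropriate weight; for a meromorphic form of the type extended in Section \ref{sec: Siegel operator}, this limit is extracted from the Fourier--Jacobi expansion of $f$ in $q = e^{2\pi i w}$.

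First I would rewrite $f_Z$ in this trivialization. Using Theorem \ref{thm: intro main construction}, $f_Z(\tau)$ equals the primitive infinitesimal invariant of $Z$ evaluated as a Koszul cocycle in the automorphic vector bundle associated to $\Sym^{4}\otimes\det^{-1}$, with the $H^{1}$ of the abelian fibre trivialized by the chosen symplectic frame. The admissibility of the normal function of $Z$ (\cite{GG}, \cite{Sa}), already invoked for the boundary bound in Theorem \ref{thm: intro main construction}, implies that $f_Z$ has a Fourier expansion in $q$ with at most a simple pole; consequently $\Phi_I f_Z$ is literally the $q^{0}$-coefficient of $f_Z$, further projected onto the appropriate isotypic piece under the branching to the Levi of the parabolic.

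Second I would identify this $q^{0}$-coefficient with $f_{Z_I}$. In the rank $1$ irreducible degeneration, the central fibre of the universal family is a $\mathbb{G}_m$-extension of the genus $g-1$ abelian variety, and the monodromy logarithm $N$ acts on $H^{1}$ with a single non-trivial Jordan block killing the $w$-frame vector. The K-theory elevator is constructed so that its de~Rham realization agrees, on the graded pieces of the weight filtration, with the result of contracting the Koszul representative of the infinitesimal invariant of $Z$ against $N$ and restricting the remaining tensor factors to $H^{1}$ of the genus $g-1$ fibre. A direct comparison between this contraction and the representation-theoretic Siegel projection on $\Sym^{4}\otimes\det^{-1}$ then yields $\Phi_I f_Z = c\cdot f_{Z_I}$ for a constant $c$ independent of $Z$.

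The main obstacle is this second step, namely the compatibility of two a priori different operations: the Siegel operator, defined representation-theoretically as a limit/projection on the ambient modular form, and the K-theory elevator, defined geometrically via the vanishing cycle of the degenerating abelian family. Establishing their compatibility requires pinning down the branching of $\Sym^{4}\otimes\det^{-1}$ under the Levi $GL_{g-1}\times GL_{1}$, matching the irreducible piece picked out by $\Phi_I$ with the target weight of $f_{Z_I}$, and carefully tracking signs and combinatorial constants coming from the Koszul differential and the normalization of $N$. Once the matching is made at the level of period coordinates on a dense open subset of $\AI$, the global identity follows by continuity combined with the uniqueness of the meromorphic extension guaranteed by admissibility.
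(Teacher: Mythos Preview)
Your overall architecture is the same as the paper's: work near the corank~$1$ boundary, show that the boundary value of $\delta\nu_Z^{+}$ lands in the correct sub-bundle, and identify it with $\delta\nu_{Z_I}^{+}$. The paper does this in \S\ref{sec: proof}, after preparing in \S\ref{sec: Siegel Koszul} the explicit Siegel operator $\Phi_I$ on each term of the Koszul complex $\mathcal{K}_g$ (Propositions \ref{prop: Siegel 3 to 2} and \ref{prop: Sigel Koszul g=2}), which gives the geometric identification $(\mathcal{H}^{p,q}_{\prim})^{I}\simeq H^{p,q-1}_{\prim}(B)\wedge(d\bar z/\bar z)$ over the product locus. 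Your branching discussion is a gesture toward this, but the paper needs the geometric form of that identification (Lemma \ref{prop: Siegel Hodge geometric II}), not just the representation-theoretic one, because the comparison with the elevator is carried out by pairing against explicit test forms.

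The genuine gap is your second step. You write that ``the K-theory elevator is constructed so that its de~Rham realization agrees \dots\ with the result of contracting the Koszul representative of the infinitesimal invariant of $Z$ against $N$.'' This is not how the elevator is defined: $Z_I$ is obtained purely cycle-theoretically by restricting $Z'$ to the singular fibre $B\times Q_0$ and pulling back along $B\times\cube\hookrightarrow B\times\proj^1\to B\times Q_0$. Nothing in that construction tells you a priori what $\nu(Z_I)$ or $\delta\nu_{Z_I}$ is in terms of the limiting behaviour of $\nu(Z_t)$. That compatibility is exactly the content of the limit formula \eqref{eqn: limit formula} from \cite{elevator}, which says $\lim_{t\to 0}\langle\nu(Z_t),\omega_t\rangle=\langle\nu(Z_0),\eta_0\rangle$ for test forms $\omega_t\to(2\pi i)^{-1}\eta_0\wedge dz/z$. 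The paper invokes this formula as the crucial external input and then differentiates in the $\tau$-direction to pass from normal functions to infinitesimal invariants (Proposition \ref{prop: equality SI}). Your phrase ``contracting against $N$'' is not an alternative proof of this; at best it is a restatement on the Hodge-theoretic side, and you would still owe the reader the link between that contraction and the Abel--Jacobi invariant of the \emph{cycle} $Z_I$, which is precisely the non-trivial analytic computation of \cite{elevator}.

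A secondary point: the limit formula is only available when the singular fibre has the product form $B\times Q_0$, which is why the paper restricts to the product locus $\AItilde\subset\Delta_I$ and only afterwards descends to $\AI$. Your sketch does not isolate this locus, and without it the test forms $\eta_0\wedge dz/z$ do not make sense on a general boundary fibre.
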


See \S \ref{sec: elevator} for the precise setting of Theorem \ref{thm: degeneration intro}. 
The full version (Theorem \ref{thm: elevator Siegel}) is again formulated for base-changed families. 

The proof of Theorem \ref{thm: elevator Siegel} is done by comparing the Siegel operator with 
the limit formula of \cite{elevator} for the normal functions. 
The partial toroidal compactification serves as a common place for both operations. 
A point of the proof is that the reduction of automorphic vector bundles which takes place in the process of Siegel operator 
agrees with the assumption made in \cite{elevator} on the Kunneth type of differential forms. 

When two branches intersect, this often occurs on the ``boundary'' of them; 
one is then invited to extend the scope of each side. 
In a modest and technical level, this seems to be the case also in the subject of this paper. 
The appearance of \textit{meromorphic} modular forms is already such a sign. 
In the direction of Theorem \ref{thm: degeneration intro}, we are naturally led to considering 
\begin{enumerate}
\item Siegel operator for meromorphic modular forms (\S \ref{ssec: Siegel meromorphic}) 
\item Structure of rank $1$ degeneration of universal abelian variety 
(see \cite{Na}, \cite{HW}, \cite{HKW} for some examples) 
\item K-theory elevator for chains of ${\proj}^1$-bundles (cf.~Remark \ref{remark: elevator with several components}) 
\end{enumerate}
(1) and (2) belong to the modular form side, while (3) belongs to the cycle side. 
Here we just prepare (1) in a minimal level, which is necessary for presenting Theorem \ref{thm: degeneration intro}. 
(2) and (3) are related, and will be necessary when extending Theorem \ref{thm: degeneration intro}. 
This will be addressed elsewhere. 

Beyond these rather technical problems, Theorem \ref{thm: degeneration intro} also raises the following question. 
From the viewpoint of modular forms, Siegel operator just singles out the $0$-th Fourier-Jacobi coefficient, 
while we also have Fourier-Jacobi coefficients of higher degree. 
They are no longer Siegel modular forms, but Jacobi forms. 
Then one might want to expect that 
at least the first Fourier-Jacobi coefficient of $f_{Z}$ has a cycle-theoretic counterpart,  
which could be interpreted as an operation of differentiating degenerating family of cycles. 

If we go back to Theorem \ref{thm: intro main construction}, it also raises the following problems: 
\begin{itemize}
\item Arithmetic properties of $f_{Z}$ when $Z$ is defined over ${\Q}$. 
\item Explicit expression of $f_{Z}$ when $Z$ is concretely given; 
for example, when $g=2$ and the exceptional locus is Humbert surfaces, 
then express $f_{Z}$ as the product of a Borcherds product and an explicit holomorphic vector-valued Siegel modular form. 
\item In the above situation, we may take the residue of $f_Z$ along the Humbert surface 
to obtain a vector-valued modular form on the Humbert surface. 
Does this operation have a cycle-theoretic interpretation? 
\end{itemize}

The motivation of this paper comes from the pioneering work of Collino and Pirolla (\cite{Co}, \cite{CP}). 
In \cite{CP}, they discovered that the infinitesimal invariant of the Ceresa cycle in genus $3$ 
is essentially the equation of the universal plane quartic. 
Much later, the latter was studied independently from the viewpoint of modular forms (\cite{CFvdG2}); 
the connection was recently clarified in \cite{Hain3}.  
In the subsequent paper \cite{Co}, 
Collino constructed higher Chow cycles in $g=2$ and $g=1$ 
by applying the K-theory elevator to the Ceresa cycle in $g=3$ successively, 
and expressed their infinitesimal invariants in terms of 
invariants of six points on ${\proj}^1$ in the case $g=2$, 
and theta series in the case $g=1$ respectively. 
It was observed, again later and independently, 
that invariants of binary sextics are vector-valued Siegel modular forms (\cite{CFvdG1}). 
In a sense, like the age of Lemniscate, algebraic cycles have been ahead of modular forms. 
The present paper grew out of an effort to understand the idea of Collino from the viewpoint of modular forms. 


I would like to thank Richard Hain for fruitful communication.

\section{Siegel modular forms}\label{sec: SMF}

In this section we recall Siegel modular varieties and Siegel modular forms following \cite{vdG}.

\subsection{Siegel modular varieties}\label{ssec: SMV}

Let $\Lambda$ be a symplectic lattice, namely 
a free ${\Z}$-module of rank $2g>0$ equipped with a nondegenerate symplectic form 
$( \cdot, \cdot) \colon \Lambda \times \Lambda \to {\Z}$. 
We denote by ${\rm Sp}(\Lambda)$ the symplectic group of $\Lambda$. 
Let ${\rm LG}(\Lambda)$ be the Lagrangian Grassmannian parametrizing 
maximal ($\Leftrightarrow$ $g$-dimensional) isotropic subspaces $V$ of $\Lambda_{{\C}}$. 
Let ${\D}\subset {\rm LG}(\Lambda)$ be the open set parametrizing those $V$ such that 
the Hermitian form $i( \cdot, \bar{\cdot})|_{V}$ on $V$ is positive-definite. 
This is the Hermitian symmetric domain associated to ${\rm Sp}(\Lambda_{{\R}})$. 

We choose a maximal rational isotropic subspace $I_{0}$ of $\Lambda_{{\Q}}$. 
Let 
\begin{equation*}
\frak{H}_{I_{0}} = \{ \: \tau \in {\Sym}^2(I_0)_{{\C}} \: | \: {\rm Im}(\tau) > 0 \: \} 
\end{equation*}
be the Siegel upper half space in ${\Sym}^2(I_0)_{{\C}}$, 
where $>0$ means positive-definite. 
If we choose another maximal rational isotropic subspace of $\Lambda_{{\Q}}$ complementary to $I_{0}$, 
the graph construction gives an isomorphism 
\begin{equation}\label{eqn: Siegel upper half space}
{\D} \stackrel{\simeq}{\longrightarrow} \frak{H}_{I_{0}} \; \subset {\Sym}^2(I_0)_{{\C}}. 
\end{equation}
This is a well-known realization of ${\D}$ as the Siegel upper half space. 
Let $P(I_{0})$ be the stabilizer of $I_{0}$ in ${\rm Sp}(\Lambda_{{\Q}})$ 
and $U(I_{0})$ be the kernel of the natural map 
$P(I_{0}) \to {\rm GL}(I_{0})$. 
Then $U(I_0)$ is the unipotent radical of $P(I_{0})$ and naturally isomorphic to ${\rm Sym}^2I_0$. 
The action of $U(I_{0})$ on ${\D}$ is identified via \eqref{eqn: Siegel upper half space} with 
the translation by ${\Sym}^2I_0$ on $\frak{H}_{I_{0}}\subset {\Sym}^2(I_0)_{{\C}}$. 

In general, a rational isotropic subspace $I$ of $\Lambda_{{\Q}}$ determines 
a rational boundary component ${\D}_{I}$ of ${\D}$, usually called a \textit{cusp}. 
This is the locus of those $[V]\in {\rm LG}(\Lambda)$ such that 
$I\subset V$ and $i( \cdot, \bar{\cdot})|_{V}$ is positive-semidefinite with kernel $I_{{\C}}$. 
The cusp ${\D}_{I}$ itself is the Hermitian symmetric domain attached to the symplectic space $I^{\perp}/I$. 
We denote by ${\D}^{\ast}$ the union of ${\D}$ and all cusps, equipped with the so-called Satake topology. 

Let ${\G}$ be a finite-index subgroup of ${\rm Sp}(\Lambda)$. 
We assume $-1\not\in {\G}$ throughout this paper. 
By the theory of Satake-Baily-Borel, 
the quotient ${\AGast}={\D}^{\ast}/{\G}$ has the structure of a normal projective variety 
and contains ${\AG}={\D}/{\G}$ as a Zariski open set. 
In particular, ${\AG}$ is a quasi-projective variety, known as a \textit{Siegel modular variety}. 
For each cusp ${\D}_{I}$ of ${\D}$, its image in ${\AGast}$ is denoted by ${\AI}$. 
This itself is a Siegel modular variety attached to $I^{\perp}/I$. 
We use the terminology ``cusp'' also for ${\AI}$. 

\subsection{Automorphic vector bundles}\label{ssec: automorphic VB}

Let ${\E}$ be the weight $1$ Hodge bundle on ${\D}$, 
namely the sub vector bundle of $\Lambda_{{\C}}\otimes \mathcal{O}_{{\D}}$ 
whose fiber over $[V]\in {\D}$ is $V\subset \Lambda_{{\C}}$ itself. 
Let $\lambda=(\lambda_1\geq \cdots \geq \lambda_{g} \geq 0)$ be a non-increasing sequence of nonnegative integers of length $g$.  
This corresponds to the highest weight of an irreducible polynomial representation 
$V_{\lambda}$ of ${\rm GL}_{g}({\C})$. 
Explicitly, $V_{\lambda}$ can be constructed by applying the Schur functor to the standard representation ${\rm St}={\C}^{g}$. 
We write $\lambda\geq 0$ for the condition $\lambda_{g}\geq 0$. 
By applying the Schur functor to ${\E}$, 
we obtain a ${\rm Sp}(\Lambda_{{\R}})$-equivariant vector bundle ${\El}$ on ${\D}$. 
We call ${\El}$ the \textit{automorphic vector bundle} of weight $\lambda$. 

\begin{example}
(1) The weight $\lambda=(1^{k}, 0^{g-k})$ corresponds to the $k$-th exterior tensor $\wedge^{k}{\rm St}$. 
We have ${\El}=\wedge^k{\E}$ in this case. 
When $\lambda=(1^g)$, we especially write $\lambda=\det$ and $\det {\E}={\LL}$. 

(2) The weight $(k, 0^{g-1})$ corresponds to the $k$-th symmetric tensor 
${\Sym}^{k}{\rm St}$. 
We have ${\El}={\Sym}^k{\E}$ in this case. 
The cotangent bundle $\Omega_{{\D}}^1$ is isomorphic to ${\Sym}^2{\E}$. 
\end{example}

More generally, if we drop the condition $\lambda_{g} \geq 0$, 
a non-increasing sequence $\lambda=(\lambda_1, \cdots , \lambda_{g})$ of integers of length $g$ 
corresponds to the highest weight of an irreducible rational representation $V_{\lambda}$ of ${\rm GL}(g, {\C})$. 
If we write $\lambda=\lambda^{+}-(k, \cdots, k)$ with $\lambda^{+}\geq 0$, 
we have $V_{\lambda}=V_{\lambda^{+}}\otimes \det^{-k}$. 
We often write $\lambda=\lambda^{+}\otimes \det^{-k}$ instead of $\lambda=\lambda^{+}-(k, \cdots, k)$. 
For automorphic vector bundles, we set 
${\El}={\E}_{\lambda^{+}}\otimes {\LL}^{\otimes -k}$ 
accordingly. 

For a maximal isotropic subspace $I_0$ of $\Lambda_{{\Q}}$, 
the symplectic pairing of $V\subset \Lambda_{{\C}}$ with $I_0$ 
defines an isomorphism ${\E}\to (I_{0}^{\vee})_{{\C}}\otimes \mathcal{O}_{{\D}}$. 
For a weight $\lambda\geq 0$, we denote by $V(I_0)_{{\lambda}}$ 
the linear space obtained by applying the Schur functor to $(I_{0}^{\vee})_{{\C}}$. 
Then we obtain an isomorphism 
${\El}\to V(I_0)_{{\lambda}}\otimes \mathcal{O}_{{\D}}$, 
which we call the \textit{$I_{0}$-trivialization} of ${\El}$. 
For general $\lambda$, we write $\lambda=\lambda^{+}\otimes \det^{-k}$ as above and put 
$V(I_0)_{{\lambda}}=V(I_0)_{{\lambda^{+}}}\otimes (\det I_{0})_{{\C}}^{\otimes k}$. 
Then we obtain a trivialization 
${\El}\to V(I_0)_{{\lambda}}\otimes \mathcal{O}_{{\D}}$ similarly.

\subsection{Siegel modular forms}\label{ssec: SMF}

Let $\lambda=(\lambda_1, \cdots , \lambda_{g})$ be a nontrivial highest weight for ${\rm GL}(g, {\C})$. 
When $g\geq 2$, a ${\G}$-invariant holomorphic section $f$ of ${\El}$ over ${\D}$ is called 
a holomorphic \textit{Siegel modular form} of weight $\lambda$ with respect to ${\G}$. 
A basic vanishing theorem says that such a section $f\not\equiv 0$ exists 
only when $\lambda_{g}>0$ (see \cite{vdG} Proposition 1). 
If $\lambda=\det^k$, $f$ is said to be scalar-valued of weight $k$. 
When ${\G}$ is torsion-free, ${\El}$ descends to a vector bundle on ${\AG}={\D}/{\G}$, 
again denoted by ${\El}$. 
Then holomorphic Siegel modular forms of weight $\lambda$ are 
the same as holomorphic sections of ${\El}$ over ${\AG}$. 
In the case $g=1$, modular forms are defined similarly except that 
holomorphicity at the cusps are required as usual. 
The cusp condition is automatically satisfied when $g\geq 2$ (the so-called Koecher principle). 

Let $I_{0}$ be a maximal isotropic subspace of ${\Lambda}_{{\Q}}$. 
Via the isomorphism \eqref{eqn: Siegel upper half space} and the $I_{0}$-trivialization of ${\El}$, 
we can regard $f$ as a $V(I_0)_{\lambda}$-valued holomorphic function on $\frak{H}_{I_{0}}$. 
This is invariant under translation by the lattice $U(I_{0})_{{\Z}}=U(I_0)\cap {\G}$ in ${\Sym}^2I_{0}$. 
Hence it admits a Fourier expansion of the form 
\begin{equation}\label{eqn: Fourier expansion}
f(\tau ) = \sum_{l\in U(I_{0})_{{\Z}}^{\vee}} a(l) q^l, 
\qquad a(l)\in V(I_0)_{\lambda}, \quad \tau \in \frak{H}_{I_0}, 
\end{equation}
where $q^l=\exp(2\pi i (l, \tau))$ 
and $U(I_{0})_{{\Z}}^{\vee}\subset {\Sym}^2I_{0}^{\vee}$ is the dual lattice of $U(I_{0})_{{\Z}}$. 
The holomorphicity at the $I_{0}$-cusp says that 
$a(l)\ne 0$ only when $l$ is contained in the cone 
$\mathcal{C}(I_{0})\subset {\Sym}^2(I_{0}^{\vee})_{{\R}}$ 
of positive-semidefinite forms. 

In this paper we will be mainly concerned with \textit{meromorphic} modular forms. 
This is defined as a ${\G}$-invariant meromorphic section $f$ of ${\El}$ over ${\D}$ 
(plus the cusp condition when $g=1$). 
Here $\lambda$ is a highest weight for ${\rm GL}(g, {\C})$, not necessarily $\lambda \geq 0$. 

\begin{lemma}\label{lem: mero SMF}
A meromorphic Siegel modular form of weight $\lambda$ can be written as $f=f_1/f_0$ 
where $f_1$ is a holomorphic Siegel modular form of weight $\lambda^{+}=\lambda\otimes \det^k$ for some $k$ 
and $f_0$ is a scalar-valued holomorphic Siegel modular form of weight $k$. 
\end{lemma}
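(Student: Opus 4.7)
The plan is to clear the pole divisor of $f$ by multiplying by a suitably chosen scalar-valued holomorphic modular form. View $f$ as a $\G$-invariant meromorphic section of $\El$ over $\D$ (with the cusp condition when $g=1$), which descends to a meromorphic section of the corresponding automorphic $\Q$-line bundle on $\AG$. Let $D=\sum_{i}m_{i}D_{i}$ with $m_{i}>0$ be its pole divisor on $\AG$, and let $\bar{D}$ denote the closure of $D$ in the Satake-Baily-Borel compactification $\AGast$. Because $\AGast$ is projective, $\bar{D}$ is a finite effective Weil divisor.

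The key input I would use is that the Hodge line bundle $\LL$ extends to $\AGast$ as an ample $\Q$-line bundle --- this is the substance of the Baily-Borel projective embedding, in which scalar-valued holomorphic modular forms of sufficiently divisible weight $k$ embed $\AGast$ into $\proj^{N}$. By ampleness, for all sufficiently large and divisible $k$, the sheaf $\LL^{\otimes k}(-\bar{D})$ has a nonzero global section $f_{0}$, which is precisely a scalar-valued holomorphic modular form of weight $k$ satisfying $\mathrm{div}(f_{0})\geq D$ on $\AG$. Enlarging $k$ further if necessary, I also arrange $k+\lambda_{g}\geq 0$, so that $\lambda^{+}:=\lambda\otimes \det^{k}$ is a genuine non-negative highest weight. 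Then $f_{1}:=f\cdot f_{0}$ is a $\G$-invariant section of $\El \otimes \LL^{\otimes k} = \mathcal{E}_{\lambda^{+}}$ which is holomorphic on $\AG$; for $g\geq 2$, the Koecher principle then supplies holomorphicity at the cusps automatically, giving the desired holomorphic Siegel modular form of weight $\lambda^{+}$.

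For $g=1$ the argument is the same, except that before invoking ampleness I would also adjoin to $\bar{D}$ an effective divisor $\sum_{j}n_{j}c_{j}$ supported at the cusps of $\AGast$, with $n_{j}$ chosen to dominate the pole orders of $f$ at $c_{j}$; then the same ampleness argument on the projective curve $\AGast$ produces an $f_{0}$ vanishing to the required order at each cusp, ensuring that $f_{1}$ is holomorphic at the cusps as well. The main obstacle --- in the sense of the only nontrivial input --- is the ampleness of $\LL$ on $\AGast$, together with the standard descent subtlety when $\G$ has torsion (handled by passing to a power of $\LL$ that actually descends to a line bundle). Once these Baily-Borel inputs are granted, the statement reduces to the formal fact that a meromorphic section of any coherent sheaf on a projective variety becomes a holomorphic section after twisting by a sufficiently high power of an ample line bundle.
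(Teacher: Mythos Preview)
Your approach is essentially the paper's: clear the poles of $f$ by multiplying with a section of $\LL^{\otimes k}$ on $\AGast$, using Baily--Borel ampleness. The paper makes explicit one step you leave implicit---it first extends $f$ to a rational section of the reflexive coherent sheaf $j_{*}\El$ on $\AGast$ via the Levi extension theorem (using that the boundary has codimension $\geq 2$ when $g\geq 2$) and GAGA, which is what justifies treating the pole locus as an algebraic divisor---and it handles non-neat $\G$ by passing to a neat normal subgroup and taking an average product rather than by $\Q$-bundle bookkeeping; incidentally, $\El$ is a vector bundle, not a $\Q$-line bundle as you wrote.
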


\begin{proof}
This is standard, but we supply a proof in the case $g\geq 2$ due to lack of explicit reference. 
We first assume ${\G}$ neat. 
Let $j\colon {\AG}\hookrightarrow {\AGast}$ be the inclusion map. 
Since ${\AGast}$ is normal and the boundary ${\AGast}-{\AG}$ has codimension $>1$, 
the direct image $j_{\ast}{\El}$ is a (reflexive) coherent sheaf on ${\AGast}$. 
By the Levi extension theorem, $f$ extends to a meromorphic section of $j_{\ast}{\El}$ over ${\AGast}$. 
By GAGA, it is a rational section. 
Since ${\LL}$ extends to an ample line bundle on ${\AGast}$ by the Baily-Borel theory (again denoted by ${\LL}$), 
there exists a holomorphic section $f_0$ of ${\LL}^{\otimes k}$ over ${\AGast}$ for some $k\gg 0$ such that 
$f_0\otimes f$ is a holomorphic section of $j_{\ast}{\El}\otimes {\LL}^{\otimes k}$. 
Thus $f_0\otimes f$ is a holomorphic Siegel modular form of weight $\lambda\otimes \det^{k}$. 

For general ${\G}$, we choose a neat normal subgroup ${\G}'\lhd {\G}$ of finite index 
and run the above argument on $\mathcal{A}_{{\G}'}$, choosing $f_0$ to be ${\G}/{\G}'$-invariant 
(e.g., by taking the average product).  
\end{proof}

In some literatures, the conclusion of Lemma \ref{lem: mero SMF} is adopted as 
the definition of meromorphic Siegel modular forms. 
For our purpose, it is more convenient to define them just as meromorphic sections. 
By Lemma \ref{lem: mero SMF}, 
a meromorphic Siegel modular form has a Fourier expansion 
$f=\sum_{l}a(l)q^l$ of the same form as \eqref{eqn: Fourier expansion}, 
where now the vectors $l$ range over the intersection of $U(I_0)_{{\Z}}^{\vee}$ with a translation of $\mathcal{C}(I_{0})$.

\section{Higher Chow cycles}\label{sec: Chow}

In this section we recall higher Chow cycles and normal functions. 
Our main references are \cite{KLM} and \cite{GG}. 
Note that we would not need to define higher Chow \textit{groups}, 
and hence avoid technicalities regarding the moving lemma in our presentation. 
(The exception is \S \ref{ssec: decomposable} and \S \ref{ssec: rigidity}, 
but there higher Chow groups play only auxiliary roles.) 
We use the cubical presentation of higher Chow cycles.

\subsection{Higher Chow cycles}\label{ssec: Chow}

Let $X$ be an equidimensional smooth quasi-projective variety. 
Let $p \geq 0$ and $0\leq n \leq 2$. (This covers the cases we are interested in.) 
We write ${\cube}^n=({\proj}^1-\{ 1 \})^n$ for the algebraic $n$-cube with coordinates $(z_1, \cdots, z_n)$. 
The divisors of $X\times {\cube}^n$ defined by $z_i=0, \infty$ are called the codimension $1$ faces of $X\times {\cube}^n$. 
They are naturally identified with $X\times {\cube}^{n-1}$. 

By a \textit{higher Chow cycle} of type $(p, n)$ on $X$, 
we mean a codimension $p$ cycle $Z$ on $X\times {\cube}^n$ which meets 
every codimension $1$ face of $X\times {\cube}^n$ properly and satisfies the cocycle condition that 
$\sum_{i}(-1)^i(Z|_{(z_i=0)}-Z|_{(z_i=\infty)})$ is a degenerate cycle on $X\times {\cube}^{n-1}$, 
i.e., the sum of cycles pulled back from the codimension $1$ faces of $X\times {\cube}^{n-1}$.  
For example, when $n=0$, $Z$ is just a usual cycle on $X$; 
when $n=1$, $Z$ is a cycle on $X\times {\cube}$ satisfying $Z|_{X\times (0) } = Z|_{X\times (\infty) }$. 

Suppose that $X$ is irreducible and projective, 
and let $H^{2p-n}_{\mathcal{D}}(X, {\Z}(p))$ be the Deligne cohomology of $X$. 
This sits in the exact sequence 
\begin{equation*}
0 \to J^{p,n}(X) \to H^{2p-n}_{\mathcal{D}}(X, {\Z}(p)) \to F^{p}H^{2p-n}(X, {\Z}) \to 0, 
\end{equation*}
where 
\begin{equation*}
F^{p}H^{2p-n}(X, {\Z}) = H^{2p-n}(X, {\Z}) \cap F^pH^{2p-n}(X, {\C}) 
\end{equation*}
and $J^{p,n}(X)$ is the generalized complex torus defined by 
\begin{equation*}
J^{p,n}(X) = \frac{H^{2p-n-1}(X, {\C})}{F^{p}H^{2p-n-1}(X, {\C})+H^{2p-n-1}(X, {\Z})}. 
\end{equation*}
(We ignore the constants in the Tate twists.) 
The torus $J^{p,n}(X)$ is called the \textit{generalized Jacobian} of $X$. 
Note that $F^{p}H^{2p-n}(X, {\Z})$ coincides with the torsion part of $H^{2p-n}(X, {\Z})$ when $n>0$. 

According to Bloch \cite{Bl2}, the \textit{regulator} $\nu(Z)$ of 
a higher Chow cycle $Z$ of type $(p, n)$ is defined as an element of $H^{2p-n}_{\mathcal{D}}(X, {\Z}(p))$. 
When $\nu(Z)$ is sent to $0$ in $F^{p}H^{2p-n}(X, {\Z})$, the cycle $Z$ is said to be \textit{nullhomologous}. 
In particular, when $n>0$ and $H^{2p-n}(X, {\Z})$ is torsion-free (e.g., $X$ an abelian variety), 
then any higher cycle $Z$ is nullhomologous. 

For nullhomologous $Z$, the regulator $\nu(Z)\in J^{p,n}(X)$ is also called the \textit{Abel-Jacobi invariant} of $Z$. 
This is a generalization of the classical Abel-Jacobi invariant in the case $n=0$. 
An integral formula in the case $n=1$ has been classically known (going back to Bloch); 
a generalization to the case $n\geq 2$ was given by Kerr, Lewis and M\"uller-Stach \cite{KLM}. 
In this formula, a further proper intersection condition is imposed on $Z$ with regard to the real faces of ${\cube}^{n}$ 
(see \cite{KLM} \S 5.4). 
In \S \ref{sec: elevator} and later, where we implicitly use this formula, 
we tacitly assume that our cycles satisfy this condition.

\subsection{Normal functions}\label{ssec: NF}

Let $\pi \colon X\to S$ be a smooth projective morphism 
between smooth equidimensional quasi-projective varieties with irreducible fibers. 
We write $X_{s}=\pi^{-1}(s)$ for $s\in S$. 
By a family of higher Chow cycles of type $(p, n)$ on $X\to S$, 
we mean a higher Chow cycle $Z$ of type $(p, n)$ on the total space $X$ which meets every fiber 
$X_s\times {\cube}^{n}$ properly and the restriction 
$Z_{s}=Z|_{X_{s}\times {\cube}^n}$ satisfies the proper intersection condition in \S \ref{ssec: Chow} for every $s\in S$. 
Under this requirement, the naive restriction $Z_{s}$ defines a higher Chow cycle on $X_{s}$ 
without resorting to the moving lemma. 

Let $\mathcal{H}^{k}_{{\Z}}$ be the local system $R^{k}\pi_{\ast}{\Z}$ on $S$ 
and $\mathcal{H}^{k}=\mathcal{H}^{k}_{{\Z}}\otimes \mathcal{O}_{S}$.  
We denote by $(\mathcal{F}^{\bullet})$ the Hodge filtration on $\mathcal{H}^{k}$, 
and $\mathcal{H}^{p,q}=\mathcal{F}^{p}/\mathcal{F}^{p+1}$ the Hodge bundles where $p+q=k$.  
The generalized Jacobians of the $\pi$-fibers form the family of generalized complex tori 
\begin{equation*}
J^{p,n}(X/S) = \frac{\mathcal{H}^{2p-n-1}}{\mathcal{F}^{p}+\mathcal{H}^{2p-n-1}_{{\Z}}} 
\end{equation*}
over $S$. 
If $Z_s$ is nullhomologous for every $s\in S$, 
their Abel-Jacobi invariants define the holomorphic section 
\begin{equation*}
\nu_{Z} : S \to J^{p,n}(X/S), \qquad s \mapsto \nu(Z_{s}). 
\end{equation*}
This is called the \textit{normal function} of the cycle family $Z$. 

We consider the \textit{Koszul complex}  
\begin{equation*}
\mathcal{K}^{p,p-n-1} \; : \; 
\mathcal{H}^{p,p-n-1} \stackrel{\nabla}{\longrightarrow} 
\mathcal{H}^{p-1,p-n}\otimes \Omega_{S}^{1} \stackrel{\nabla}{\longrightarrow} 
\mathcal{H}^{p-2,p-n+1}\otimes \Omega_{S}^{2}. 
\end{equation*}
This is a complex of vector bundles on $S$, 
where the homomorphisms $\nabla$ are induced from the Gauss-Manin connection. 
We denote by $H^1\mathcal{K}^{p,p-n-1}$ the cohomology sheaf at the middle. 
For a cycle family $Z$ as above, 
we obtain a holomorphic section of $H^1\mathcal{K}^{p,p-n-1}$ 
by differentiating the normal function $\nu_{Z}$ with respect to the Gauss-Manin connection. 
This is called the \textit{infinitesimal invariant} of $Z$ and denoted by $\delta\nu_{Z}$. 

An important property of the normal function $\nu_{Z}$ is the \textit{admissibility} as explained in \cite{GG}. 
Suppose that we are given a smooth partial compactification $\overline{S}\supset S$ 
whose complement $D=\overline{S}-S$ is a normal crossing divisor. 
We assume that the local system $\mathcal{H}^{2p-n-1}_{{\Z}}$ has 
unipotent local monodromy around $D$. 
Then $(\mathcal{H}^{2p-n-1}, \mathcal{F}^{\bullet})$ has the canonical extension over $\overline{S}$; 
we use the same notation for the extended bundles. 
The Koszul complex $\mathcal{K}^{p,p-n-1}$ over $S$ extends to the logarithmic complex 
\begin{equation*}
\mathcal{K}^{p,p-n-1}(\log D) := (\mathcal{H}^{p-\bullet, p-n-1+\bullet}\otimes \Omega^{\bullet}_{\overline{S}}(\log D), \nabla) 
\end{equation*}
over $\overline{S}$. 
The admissibility of $\nu_{Z}$ in the sense of Green-Griffiths (\cite{GG} Definition 5.2.1) means that 
$\nu_{Z}$ has logarithmic growth around $D$ and satisfies a certain condition on the local monodromy. 
(We will not use the latter.) 
It was noticed in \cite{GG}, \cite{GGK} that 
this is equivalent to the admissibility defined earlier  by M.~Saito \cite{Sa} via mixed Hodge modules. 
When $n=0$, the admissibility of $\nu_{Z}$ is proved in \cite{SZ} Proposition 5.28 and \cite{GGK} Proposition III.B.4. 
The case $n>0$ seems to be more like a folklore: 
see \cite{BPS} p.658 for a brief argument. 

The property that $\nu_{Z}$ has logarithmic growth implies that 
the infinitesimal invariant $\delta\nu_{Z}$ extends to 
a holomorphic section of the vector bundle $H^1\mathcal{K}^{p,p-n-1}(\log D)$ over $\overline{S}$ (see \cite{GG} \S 5.3.1). 
This is the property used in the next \S \ref{sec: main construction}.

\section{The main construction}\label{sec: main construction}

In this section we present and prove the full version of Theorem \ref{thm: intro main construction}. 
In \S \ref{ssec: Koszul} we calculate the relevant Koszul complex as a complex of automorphic vector bundles. 
Our main construction is given in \S \ref{ssec: main construction}. 
The \S \ref{ssec: decomposable} and \S \ref{ssec: rigidity} are complements.

\subsection{Automorphic Koszul complexes}\label{ssec: Koszul}

Let $\Lambda$ be a symplectic lattice of rank $2g$ with $1\leq g \leq 3$. 
Recall that the Hodge bundle ${\E}$ is a sub vector bundle of $\Lambda_{{\C}}\otimes \mathcal{O}_{{\D}}$. 
We also have the trivial local system $\underline{\Lambda}=\Lambda\times {\D}$ inside $\Lambda_{{\C}}\otimes \mathcal{O}_{{\D}}$. 
The quotient 
\begin{equation*}
\mathcal{X}_{{\D}} = \frac{\Lambda_{{\C}}\otimes \mathcal{O}_{{\D}}}{{\E}+\underline{\Lambda}} 
\end{equation*}
is the universal family of abelian varieties over ${\D}$, 
naturally polarized by the symplectic form on $\Lambda$. 
We denote by $\pi \colon \mathcal{X}_{{\D}} \to {\D}$ the projection.  
Then $\mathcal{H}^{1,0}\simeq \pi_{\ast}\Omega_{\pi}^{1} \simeq {\E}$, 
where $\Omega_{\pi}^{1}$ is the relative cotangent bundle of $\pi$. 

More generally, for $(p, q)$ with $p+q\leq g$, we have 
\begin{equation*}
\mathcal{H}^{p,q}\simeq R^{q}\pi_{\ast}\Omega^{p}_{\pi} 
\simeq \wedge^{p}{\E}\otimes \wedge^{q}\mathcal{E}^{\vee}. 
\end{equation*}
This automorphic vector bundle corresponds to the ${\rm GL}(g, {\C})$-representation 
$\wedge^{p}{\St}\otimes \wedge^{q}{\St}^{\vee}$. 
This is reducible: we have the decomposition 
\begin{eqnarray}\label{eqn: Lef decomp GL}
\wedge^{p}{\St}\otimes \wedge^{q}{\St}^{\vee} 
& \simeq & 
V_{\{ 1^p; 1^q \}} \: \oplus \: (\wedge^{p-1}{\St}\otimes \wedge^{q-1}{\St}^{\vee}) \\ 
&\simeq & 
V_{\{ 1^p; 1^q \}} \: \oplus \: V_{\{ 1^{p-1}; 1^{q-1} \}} \: \oplus \: \cdots  \nonumber 
\end{eqnarray}
where 
$\{ 1^p; 1^q \}$ stands for the highest weight $(1^{p}, 0^{g-p-q}, (-1)^{q})$. 
Explicitly, $V_{\{ 1^p; 1^q \}}$ is the kernel of the contraction 
$\wedge^{p}{\St}\otimes \wedge^{q}{\St}^{\vee} \to \wedge^{p-1}{\St}\otimes \wedge^{q-1}{\St}^{\vee}$ 
with the $1$-dimensional trivial summand of ${\St}\otimes {\St}^{\vee}$, 
and $\wedge^{p-1}{\St}\otimes \wedge^{q-1}{\St}^{\vee}$ is embedded in 
$\wedge^{p}{\St}\otimes \wedge^{q}{\St}^{\vee}$ 
by the product with this component. 
The automorphic vector bundle corresponding to $V_{\{ 1^p; 1^q \}}$ 
is the primitive part $\mathcal{H}^{p,q}_{{\prim}}$ of $\mathcal{H}^{p,q}$, 
and \eqref{eqn: Lef decomp GL} corresponds to the Lefschetz decomposition 
\begin{equation}\label{eqn: Lef decomp}
\mathcal{H}^{p,q} \, \simeq \, \mathcal{H}^{p,q}_{{\prim}} \, \oplus \, \mathcal{H}^{p-1,q-1}. 
\end{equation}

The Koszul complex $\mathcal{K}^{p,q}$ for $\mathcal{X}_{{\D}}\to {\D}$, 
regarded as a complex of automorphic vector bundles, corresponds to the complex 
\begin{equation*}
\wedge^{p}{\St}\otimes \wedge^{q}{\St}^{\vee} \to 
\wedge^{p-1}{\St}\otimes \wedge^{q+1}{\St}^{\vee} \otimes {\Sym}^2{\St} \to 
\wedge^{p-2}{\St}\otimes \wedge^{q+2}{\St}^{\vee} \otimes \wedge^2{\Sym}^2{\St} 
\end{equation*}
of ${\rm GL}(g, {\C})$-representations. 
Here the homomorphisms are induced by the natural contractions and products. 
By the Lefschetz decomposition \eqref{eqn: Lef decomp}, 
$\mathcal{K}^{p,q}$ decomposes as 
\begin{equation}\label{eqn: Lef Koszul}
\mathcal{K}^{p,q} = \mathcal{K}^{p,q}_{{\prim}} \oplus \mathcal{K}^{p-1,q-1}, 
\end{equation}
where the primitive part $\mathcal{K}^{p,q}_{{\prim}}$ is  
\begin{equation*}
\mathcal{K}^{p,q}_{{\prim}} \; : \; 
\mathcal{H}^{p,q}_{{\prim}} \to 
\mathcal{H}^{p-1,q+1}_{{\prim}}\otimes \Omega_{{\D}}^1 \to 
\mathcal{H}^{p-2,q+2}_{{\prim}}\otimes \Omega_{{\D}}^2. 
\end{equation*}
This corresponds to the complex  
\begin{equation*}
V_{\{ 1^p; 1^q \}} \to 
V_{\{ 1^{p-1}; 1^{q+1} \}} \otimes {\Sym}^2 \to 
V_{\{ 1^{p-2}; 1^{q+2} \}} \otimes \wedge^2{\Sym}^2 
\end{equation*}
of ${\rm GL}(g, {\C})$-representations, where we abbreviate ${\Sym}^2={\Sym}^2{\St}$. 
By \eqref{eqn: Lef Koszul}, we have 
\begin{equation}\label{eqn: Lef Koszul H1}
H^1\mathcal{K}^{p,q} \, = \, H^1\mathcal{K}^{p,q}_{{\prim}} \oplus H^1\mathcal{K}^{p-1,q-1}. 
\end{equation}

In this paper we will be interested in $\mathcal{K}^{2,g-2}$.  
We denote by $\mathcal{K}_{g}=\mathcal{K}^{2,g-2}_{{\prim}}$ its primitive part. 
Explicitly, $\mathcal{K}_{g}$ is written as 
\begin{eqnarray*}
g=3 & : & \mathcal{H}^{2,1}_{{\prim}} \to 
\mathcal{H}^{1,2}_{{\prim}}\otimes \Omega_{{\D}}^1 \to 
\mathcal{H}^{0,3}\otimes \Omega_{{\D}}^2 \\ 
g=2 & : & \mathcal{H}^{2,0} \to 
\mathcal{H}^{1,1}_{{\prim}}\otimes \Omega_{{\D}}^1 \to 
\mathcal{H}^{0,2}\otimes \Omega_{{\D}}^2 \\ 
g=1 & : & 0 \to \mathcal{H}^{1,0}\otimes \Omega_{{\D}}^1 \to 0  
\end{eqnarray*}

\begin{lemma}\label{lem: H1Kg}
We have $H^1\mathcal{K}_{g}\simeq {\Sym}^4{\E}\otimes {\LL}^{-1}$ for any $1\leq g \leq 3$. 
\end{lemma}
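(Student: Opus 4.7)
My plan is to reduce everything to an elementary computation in ${\rm GL}(g, \C)$-representation theory. Since every term of $\mathcal{K}_g$ is an ${\rm Sp}(\Lambda_{\R})$-equivariant automorphic vector bundle and the Gauss--Manin differentials are equivariant, the Koszul complex corresponds to a complex of polynomial ${\rm GL}(g, \C)$-representations, and each differential is, by Schur's lemma, determined up to scalar from the irreducible decompositions of its source and target. The lemma will follow by decomposing the three terms, matching summands across the two differentials, and checking that the matching scalars are nonzero.

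For $g = 1$ the complex has only its middle term $\E \otimes \Omega_{\D}^1 = \E \otimes \Sym^2 \E = \E^{\otimes 3}$, which equals $\Sym^4 \E \otimes \LL^{-1}$ because $\LL = \E$. For $g = 2$, the three terms are $V_{(1,1)} = \det$, the tensor product $V_{(1,-1)} \otimes V_{(2,0)}$, and $\det^{-1} \otimes \wedge^2 \Sym^2 \St = \det^{-1} \otimes V_{(3,1)} = V_{(2,0)}$. Littlewood--Richardson gives $V_{(1,-1)} \otimes V_{(2,0)} = V_{(3,-1)} \oplus V_{(2,0)} \oplus V_{(1,1)}$. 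By Schur's lemma $d_0$ must land in the $V_{(1,1)}$ summand and $d_1$ must vanish on $V_{(3,-1)} \oplus V_{(1,1)}$ while mapping $V_{(2,0)}$ isomorphically onto the target; this leaves $H^1 \mathcal{K}_2 = V_{(3,-1)} = \Sym^4 \E \otimes \LL^{-1}$.

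For $g = 3$ the same bookkeeping applies. Writing $V_{(1,-1,-1)} = V_{(2,0,0)} \otimes \det^{-1}$, the middle term becomes $\Sym^2 \St \otimes \Sym^2 \St \otimes \det^{-1}$, which by the classical plethysm $\Sym^2 \St \otimes \Sym^2 \St = V_{(4,0,0)} \oplus V_{(3,1,0)} \oplus V_{(2,2,0)}$ equals $V_{(3,-1,-1)} \oplus V_{(2,0,-1)} \oplus V_{(1,1,-1)}$. The first and last terms are $V_{(1,1,-1)}$ and $\det^{-1} \otimes V_{(3,1,0)} = V_{(2,0,-1)}$ respectively, which pair off with the two extra summands, isolating $V_{(3,-1,-1)} = \Sym^4 \E \otimes \LL^{-1}$ as the unique surviving piece.

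The only nontrivial point, which I expect to be the main obstacle, is to confirm that the scalars Schur's lemma leaves undetermined are indeed nonzero. My plan is to realize each differential explicitly as the canonical natural transformation between Schur functors induced by the Kodaira--Spencer map of the universal family $\XG \to \D$, which is the tautological identification $T_{\D} \simeq \Sym^2 \E^{\vee}$; contraction-and-multiplication then gives an explicit formula on weight vectors, and evaluating on a highest-weight vector of each matched irreducible will verify non-vanishing directly.
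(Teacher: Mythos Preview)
Your approach is correct and essentially the same as the paper's: both reduce to the irreducible decomposition of $\Sym^2\St \otimes \Sym^2\St$ as a ${\rm GL}(g,\C)$-representation and match two of its three summands with the outer terms of the complex. The paper streamlines the bookkeeping by first twisting $\mathcal{K}_g$ by $\LL$, which turns the second differential into the literal wedge map $\Sym^2\E\otimes\Sym^2\E \to \wedge^2\Sym^2\E$ (so its nonvanishing is automatic) and identifies the first term with $\Sym^2(\wedge^2\E)$ (for $g=3$) or $\LL^{\otimes 2}$ (for $g=2$); this spares you half of the explicit Kodaira--Spencer check you plan.
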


\begin{proof}
When $g=1$, we have 
$\mathcal{H}^{1,0}={\E}={\LL}$ and $\Omega_{{\D}}^1\simeq {\LL}^{\otimes 2}$. 

Next let $g=2$. 
We have $\mathcal{H}^{2,0}= {\LL}$ and $\mathcal{H}^{0,2}\simeq {\LL}^{-1}$.  
Since $\mathcal{H}^{1,1}_{{\rm prim}}$ is the kernel of 
${\E}\otimes {\E}^{\vee}\to \mathcal{O}_{{\D}}$ and 
${\E}^{\vee}\otimes {\LL}\simeq {\E}$, 
we have 
$\mathcal{H}^{1,1}_{{\rm prim}}\otimes {\LL} \simeq {\Sym}^2{\E}$. 
The Koszul complex twisted by ${\LL}$ is written as 
\begin{equation*}
{\LL}^{\otimes 2} \longrightarrow {\Sym}^2{\E}\otimes {\Sym}^2{\E} 
\stackrel{\wedge}{\longrightarrow} \wedge^2{\Sym}^2{\E}. 
\end{equation*}
In the level of representations of ${\rm GL}(2, {\C})$, we have 
\begin{equation*}
{\Sym}^2 \otimes {\Sym}^2 = {\Sym}^4 \oplus {\rm det}^{2} \oplus (\wedge^2{\Sym}^2) 
\end{equation*}
(see \cite{FH} Exercise 6.16 or \S 11.2). 
This shows that the middle cohomology of the above complex is  ${\Sym}^4{\E}$. 

Finally, let $g=3$. 
Then $\mathcal{H}^{1,2}_{{\rm prim}}$ is the kernel of the contraction 
${\E}\otimes \wedge^2{\E}^{\vee}\to {\E}^{\vee}$. 
Since $\wedge^2{\E}^{\vee}\otimes {\LL} \simeq {\E}$ and 
${\E}^{\vee}\otimes {\LL} \simeq \wedge^2{\E}$, 
this contraction twisted by ${\LL}$ is identified with the wedge product ${\E}\otimes {\E}\to \wedge^{2}{\E}$. 
Hence $\mathcal{H}^{1,2}_{{\rm prim}} \otimes {\LL} \simeq {\Sym}^2{\E}$. 
Similarly, we have $\mathcal{H}^{2,1}_{{\rm prim}} \otimes {\LL}^{-1} \simeq {\Sym}^2{\E}^{\vee}$ 
and so $\mathcal{H}^{2,1}_{{\rm prim}} \otimes {\LL} \simeq {\Sym}^{2}(\wedge^{2}{\E})$. 
Thus the Koszul complex twisted by ${\LL}$ can be written as 
\begin{equation*}
{\Sym}^2(\wedge^2{\E}) \longrightarrow {\Sym}^2{\E}\otimes {\Sym}^2{\E} 
\stackrel{\wedge}{\longrightarrow} \wedge^2{\Sym}^2{\E}. 
\end{equation*}
In the level of representations of ${\rm GL}(3, {\C})$, we have 
\begin{equation*}
{\Sym}^2 \otimes {\Sym}^2 = 
{\Sym}^4 \: \oplus \: ({\Sym}^2\!\wedge^2) \: \oplus \: (\wedge^2{\Sym}^2) 
\end{equation*}
(see \cite{FH} Exercise 6.16 or p.189). 
Therefore the middle cohomology of the above complex is ${\Sym}^4{\E}$. 
\end{proof}

Similarly and more easily, 
the non-primitive part $H^1\mathcal{K}^{1, g-3}$ of $H^1\mathcal{K}^{2, g-2}$ is calculated as 
\begin{equation}\label{eqn: non-primitive H1}
H^1\mathcal{K}^{1, g-3} \: \simeq \: 
\begin{cases}
\: \wedge^2{\Sym}^{2}{\E}\otimes {\LL}^{-1}  & \; g=3 \\ 
\: {\Sym}^2{\E} & \; g=2 \\ 
\: 0 & \; g=1 
\end{cases}
\end{equation}

\subsection{The main construction}\label{ssec: main construction}

Let ${\G}$ be a finite-index subgroup of ${\rm Sp}(\Lambda)$ with $-1\not\in {\G}$. 
Over the Siegel modular variety ${\AG}={\D}/{\G}$ 
we have the universal family ${\XG}=\mathcal{X}_{{\D}}/{\G}$ of abelian varieties. 
If we restrict ${\XG}$ over the Zariski open set $\mathcal{A}_{{\G}}^{\circ}\subset {\AG}$ 
where ${\G}$ has trivial stabilizer, 
this is indeed a smooth family of abelian varieties. 

It is often the case that a cycle family can be defined only after taking an etale base change. 
Thus let $f\colon S\to U$ be an etale cover of degree $d$ 
of a Zariski open set $U$ of $\mathcal{A}_{{\G}}^{\circ}$. 
We take the base change $X={\XG}\times_{{\AG}}S$ and 
denote by $\pi \colon X\to S$ the resulting family of abelian varieties. 
From now on, we use subscript for indicating the base of Hodge bundles and Koszul complexes. 
Then $\mathcal{H}_{S}^{p,q} \simeq f^{\ast}\mathcal{H}_{U}^{p,q}$. 
Moreover, since $f$ is etale, we have 
$\mathcal{K}_{S}^{p,q} \simeq f^{\ast}\mathcal{K}_{U}^{p,q}$. 

Let $Z$ be a family of higher Chow cycles of type $(2, 3-g)$ on $X\to S$. 
When $g=3$, we assume that $Z$ is a family of nullhomologous cycles; 
as we explained in \S \ref{ssec: Chow}, this is automatically satisfied when $g\leq 2$. 
The infinitesimal invariant $\delta\nu_{Z}$ of $Z$ is a section of $H^1\mathcal{K}^{2,g-2}_{S}$ over $S$. 
We write $\delta\nu_{Z}^{+}$ for its primitive part according to the decomposition \eqref{eqn: Lef Koszul H1}. 
This is a section of 
\begin{equation*}
H^1\mathcal{K}_{g,S}\simeq f^{\ast}H^1\mathcal{K}_{g,U}\simeq f^{\ast}({\Sym}^4{\E}\otimes {\LL}^{-1}) 
\end{equation*}
over $S$. 

In order to obtain a Siegel modular form, 
we take symmetric polynomials of $\delta\nu_{Z}^{+}$ along the $f$-fibers. 
More precisely, for $1\leq i \leq d$, let $e_i$ be the $i$-th elementary symmetric polynomial in $d$ variables. 
We set 
\begin{equation*}
g_{Z}^{(i)}(u) = e_i(\delta\nu_{Z}^{+}(s_1), \cdots, \delta\nu_{Z}^{+}(s_d)), \qquad u\in U,  
\end{equation*} 
where $f^{-1}(u)= \{ s_1, \cdots, s_d \}$. 
This is well-defined by the symmetricity of $e_{i}$, 
and takes values in the $i$-th symmetric tensor of the fiber of ${\Sym}^4{\E}\otimes{\LL}^{-1}$ over $u$. 
Thus $g_{Z}^{(i)}$ is a holomorphic section of 
\begin{equation*}
{\Sym}^i({\Sym}^4{\E}\otimes{\LL}^{-1}) = {\Sym}^{i}{\Sym}^{4}{\E}\otimes{\LL}^{\otimes -i} 
\end{equation*}
over $U$. 
At this stage, the ${\rm GL}(g, {\C})$-representation ${\Sym}^{i}{\Sym}^{4}$ is reducible unless $g=1$. 
We send $g_{Z}^{(i)}$ by the natural projection 
${\Sym}^i{\Sym}^4 \to {\Sym}^{4i}$ 
and denote the image by $f_{Z}^{(i)}$.

\begin{theorem}\label{thm: main construction}
The section $f_{Z}^{(i)}$ is a meromorphic Siegel modular form 
of weight ${\Sym}^{4i}\otimes {\rm det}^{-i}$ 
which has at most pole of order $i$ along the complement of $U$. 
When $g=1$, $f_{Z}^{(i)}$ is holomorphic at the cusps. 
\end{theorem}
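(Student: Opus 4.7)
\smallskip

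The plan is to verify three properties of $f_{Z}^{(i)}$ in turn: (i) modularity on $U$, (ii) meromorphic extension to $\mathcal{A}_{\Gamma}$ with pole of order at most $i$ along $\mathcal{A}_{\Gamma}-U$, and (iii) in the case $g=1$, holomorphy at the cusps. Step (i) is essentially built in: since $U\subset \mathcal{A}_{\Gamma}^{\circ}$, the open set $\pi^{-1}(U)\subset \mathcal{D}$ is $\Gamma$-invariant, and the symmetrization $g_{Z}^{(i)}$ on $U$ lifts to a $\Gamma$-invariant holomorphic section of ${\rm Sym}^{i}{\rm Sym}^4 \mathcal{E}\otimes \mathcal{L}^{-i}$ on this open set; projecting via the canonical ${\rm GL}(g,\mathbb{C})$-equivariant map ${\rm Sym}^i {\rm Sym}^4 \to {\rm Sym}^{4i}$ preserves $\Gamma$-invariance.

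The main work is step (ii). I would argue at a generic point of each irreducible codimension-one component $T\subset \mathcal{A}_{\Gamma}-U$, where it suffices to control the order of pole along a generic smooth point of $T$. Pulling back by the \'etale map $f\colon S\to U$ and extending across the divisor $D_{S}=f^{-1}(T)$ (smooth at generic points), the admissibility of the normal function $\nu_{Z}$ (\cite{GG}, \cite{GGK}, \cite{SZ}, \cite{BPS}) yields an extension of $\delta\nu_{Z}$ to a holomorphic section of the logarithmic Koszul cohomology $H^{1}\mathcal{K}^{2,g-2}(\log D_{S})$. The Lefschetz decomposition \eqref{eqn: Lef Koszul H1} is defined over the Schur functor applied to the canonical extension of $\mathcal{E}$, hence persists in the log complex, so the primitive component $\delta\nu_{Z}^{+}$ extends to a section of $H^{1}\mathcal{K}_{g}(\log D_{S})$. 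Here lies the key conversion: in local coordinates with $D_{S}=\{t=0\}$, a representative cocycle has the shape $\alpha\otimes dt/t+(\text{holomorphic})$, and the identification of Lemma \ref{lem: H1Kg} (which is linear over $\mathcal{O}$ and respects multiplication by $t$) converts the $dt/t$ factor into a $1/t$ factor on $\operatorname{Sym}^4\mathcal{E}\otimes\mathcal{L}^{-1}$. Thus $\delta\nu_{Z}^{+}$ extends to a meromorphic section of ${\rm Sym}^4 \mathcal{E}\otimes \mathcal{L}^{-1}$ with pole of order at most one along $D_{S}$, and since $f_{Z}^{(i)}$ is a degree-$i$ symmetric polynomial in the values of $\delta\nu_{Z}^{+}$ on the fiber of $f$, its order of pole along $T$ is at most $i$. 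Finally, for $g\geq 2$, Lemma \ref{lem: mero SMF} combined with Koecher's principle ensures meromorphy over the Satake boundary.

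For step (iii), let $g=1$. By the Kodaira--Spencer isomorphism for elliptic curves, the canonical (Baily--Borel = toroidal) extension of $\Omega^{1}_{\mathcal{D}}$ to $\mathcal{A}_{\Gamma}^{*}$ equals $\Omega^{1}_{\mathcal{A}_{\Gamma}^{*}}(\log({\rm cusps}))\simeq \overline{\mathcal{L}}^{\otimes 2}$. Hence the canonical extension of $\mathcal{L}^{3}\simeq \mathcal{H}^{1,0}\otimes \Omega^{1}_{\mathcal{D}}$ is realized by the log complex $\mathcal{H}^{1,0}\otimes \Omega^{1}(\log({\rm cusps}))\simeq \overline{\mathcal{L}}^{3}$, and admissibility at the cusps implies $\delta\nu_{Z}$ (which here equals $\delta\nu_{Z}^{+}$) extends to a \emph{holomorphic} section of $\overline{\mathcal{L}}^{3}$ over $\mathcal{A}_{\Gamma}^{*}$. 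The symmetric polynomial construction preserves holomorphy (after choosing a finite, possibly ramified extension of $S\to U$ across the cusps and invoking continuity of elementary symmetric functions), so $f_{Z}^{(i)}$ is holomorphic at cusps.

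The main obstacle is the precise conversion in step (ii) between the logarithmic pole appearing in $H^{1}\mathcal{K}_{g}(\log D_{S})$ and a first-order pole in $\operatorname{Sym}^{4}\mathcal{E}\otimes\mathcal{L}^{-1}$: one must check that the representation-theoretic identification of Lemma \ref{lem: H1Kg} is compatible with the inclusion $\Omega^{\bullet}_{S}\hookrightarrow \Omega^{\bullet}_{S}(\log D_{S})$, so that log cocycles map cleanly to order-one meromorphic sections rather than producing unexpected cancellations or extra pole contributions from the coboundaries $\nabla(\mathcal{H}^{p,q})$.
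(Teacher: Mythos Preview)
Your proposal is correct and follows essentially the same route as the paper. The paper dispatches your ``main obstacle'' cleanly by first reducing to torsion-free $\Gamma$ and extending $f$ to a proper finite map $S^{+}\to U^{+}\subset\mathcal{A}_{\Gamma}$: since the abelian-variety family extends \emph{smoothly} over $S^{+}$ (being pulled back from $\mathcal{X}_{\Gamma}\to\mathcal{A}_{\Gamma}$), the Hodge bundles extend as pullbacks of automorphic vector bundles and the representation-theoretic splitting of Lemma~\ref{lem: H1Kg} is valid over all of $U^{+}$, yielding directly $H^{1}\mathcal{K}_{g,U^{+}}(\log D_{U})\subset(\mathrm{Sym}^{4}\mathcal{E}\otimes\mathcal{L}^{-1})(D_{U})$ without any local cocycle bookkeeping.
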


\begin{proof}
By construction, $f_{Z}^{(i)}$ is a holomorphic section of 
${\Sym}^{4i}{\E}\otimes{\LL}^{\otimes -i}$ over the Zariski open set $U$ of ${\AG}$. 
What has to be done is to bound its singularities along the complement of $U$. 

As the first reduction, we shrink ${\G}$. 
We choose a torsion-free subgroup ${\G}'<{\G}$ of finite index and 
let $X'\to S' \to U'$ 
be the pullback of $X\to S \to U$ by 
$\mathcal{A}_{{\G}'}\to {\AG}$, 
where $U'\subset \mathcal{A}_{{\G}'}$ is the inverse image of $U$. 
If $Z'$ is the base change of $Z$ to $X'\to S'$, 
then $f_{Z'}^{(i)}$ is the pullback of $f_{Z}^{(i)}$.  
In other words, $f_{Z'}^{(i)}=f_{Z}^{(i)}$ over ${\D}$. 
Hence our assertion for $f_{Z'}^{(i)}$ implies that for $f_{Z}^{(i)}$. 
Thus, replacing ${\G}$ with ${\G}'$, 
we may assume from the outset that ${\G}$ is torsion-free. 

We may take a smooth partial compactification $S^{+}$ of $S$ such that 
the etale covering $f\colon S\to U$ extends to a (proper) finite morphism 
$f\colon S^{+}\to U^{+}\subset {\AG}$ 
whose image $U^{+}$ has complement of codimension $>1$ in ${\AG}$. 
It suffices to bound the singularities of $f_{Z}^{(i)}$ along $U^{+}-U$. 
Shrinking $S^{+}$ and $U^{+}$ if necessary, 
we may assume that the complements $D_{S}=S^{+}-S$ and $D_{U}=U^{+}-U$ are smooth divisors. 
By the base change from ${\XG}\to {\AG}$, 
the family $X\to S$ extends to a smooth family over $S^{+}$. 
This shows that the relevant variations of Hodge structures naturally extend over $S^{+}$ 
with $\mathcal{H}^{p,q}_{S^{+}}\simeq f^{\ast}\mathcal{H}^{p,q}_{U^{+}}$. 
On the other hand, as for the cotangent bundles,  
$\Omega_{S^{+}}^{k}(\log D_S)$ is isomorphic to $f^{\ast}\Omega_{U^{+}}^{k}(\log D_U)$. 
It follows that 
$\mathcal{K}^{p,q}_{S^{+}}(\log D_S) \simeq f^{\ast}\mathcal{K}^{p,q}_{U^{+}}(\log D_U)$. 
In particular, we have 
\begin{eqnarray}\label{eqn: pullback H^1K}
H^{1}\mathcal{K}_{g,S^{+}}(\log D_S) 
& \simeq & 
f^{\ast} H^{1}\mathcal{K}_{g, U^{+}}(\log D_U) \\ 
& \subset & 
f^{\ast}( (H^{1}\mathcal{K}_{g, U^{+}})(D_U) ) \nonumber  \\ 
& \simeq & 
f^{\ast}({\Sym}^4{\E}\otimes {\LL}^{-1}(D_{U})). \nonumber 
\end{eqnarray} 

Now the admissibility of $\nu_{Z}$ 
tells us that $\delta\nu_{Z}$ extends holomorphically over $S^{+}$ 
as a section of $H^1\mathcal{K}_{S^{+}}^{2,g-2}(\log D_{S})$. 
Hence $\delta\nu_{Z}^{+}$ extends holomorphically over $S^{+}$ 
as a section of $H^1\mathcal{K}_{g, S^{+}}(\log D_{S})$. 
By \eqref{eqn: pullback H^1K} and the properness of $S^{+}\to U^{+}$, 
we find that $f_{Z}^{(i)}$ extends holomorphically over $U^{+}$ 
as a section of ${\Sym}^{4i}{\E}\otimes {\LL}^{\otimes -i}(iD_{U})$. 
This means that, as a section of ${\Sym}^{4i}{\E}\otimes {\LL}^{\otimes -i}$, 
$f_{Z}^{(i)}$ has at most pole of order $i$ along $D_{U}$. 
When $g\geq 2$, this suffices to conclude that $f_{Z}^{(i)}$ is a meromorphic Siegel modular form 
because cusp condition is unnecessary. 

It remains to prove the holomorphicity at the cusps in the case $g=1$. 
We may assume that ${\G}$ is neat. 
We write $\mathcal{A}={\AG}$. 
Let $f\colon \overline{S}\to \mathcal{A}^{\ast}$ be the full compactification of $S^{+}\to U^{+}=\mathcal{A}$, 
where $\overline{S}$ is a smooth projective curve and $\mathcal{A}^{\ast}$ is the compactified modular curve. 
We write $\Delta_{\mathcal{A}}=\mathcal{A}^{\ast}-\mathcal{A}$ for the set of cusps and 
$\Delta_{S}=f^{-1}(\Delta_{\mathcal{A}})=\overline{S}-S^{+}$. 
As is well-known, 
the descent of the local system $\underline{\Lambda}$ over $\mathcal{A}$ has unipotent monodromy around the cusps, 
and the Hodge-theoretic canonical extension of ${\LL}=\mathcal{H}_{\mathcal{A}}^{1,0}$ 
coincides with the extension defining holomorphicity of modular forms at the cusps. 
We use the same notation ${\LL}$ for this extension. 
Moreover, $\Omega^{1}_{\mathcal{A}^{\ast}}(\Delta_{\mathcal{A}})$ is isomorphic to ${\LL}^{\otimes 2}$. 
These show that $R^1\pi_{\ast}{\Z}$ has unipotent monodromy around $\Delta_{S}$, 
the canonical extension of $\mathcal{H}_{S^{+}}^{1,0}$ over $\overline{S}$ is isomorphic to $f^{\ast}{\LL}$, and 
\begin{equation*}
\Omega^{1}_{\overline{S}}(\Delta_{S}+D_{S}) 
\simeq f^{\ast}\Omega^{1}_{\mathcal{A}^{\ast}}(\Delta_{\mathcal{A}}+D_{U}) 
\simeq f^{\ast}{\LL}^{\otimes 2}(D_{U}). 
\end{equation*}
Therefore we have 
\begin{equation*}
H^1\mathcal{K}_{1, \overline{S}}(\log (\Delta_S+D_{S})) \simeq f^{\ast}{\LL}^{\otimes 3}(D_{U}) 
\end{equation*}
over $\overline{S}$. 
The admissibility of $\nu_{Z}$ along $\Delta_{S}$ now implies that 
$\delta\nu_{Z}^{+}=\delta\nu_{Z}$ extends holomorphically over $\Delta_{S}$ as a section of $f^{\ast}{\LL}^{\otimes 3}$. 
(The interior points $D_{S}$ do not affect the argument around $\Delta_{S}$.) 
This implies that $f_{Z}^{(i)}$ is holomorphic at $\Delta_{\mathcal{A}}$ as a section of $\mathcal{L}^{\otimes 3i}$. 
\end{proof}

When $d>1$, it would be often the case that 
$f_{Z}^{(i)}$ vanishes for some $i$ even if $\delta\nu_{Z}^{+}$ does not vanish identically. 
However, at least the top form survives: 

\begin{proposition}
When $\delta\nu_{Z}^{+}\not\equiv 0$, 
we have $f_{Z}^{(d)}\not\equiv 0$. 
\end{proposition}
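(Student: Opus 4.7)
The plan is to identify $f_{Z}^{(d)}$ with a fiberwise product of values of $\delta\nu_{Z}^{+}$ and then appeal to the fact that the symmetric algebra is an integral domain.

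First I would observe that $e_d(x_1,\ldots,x_d)=x_1\cdots x_d$, so for every $u\in U$ with $f^{-1}(u)=\{s_1,\ldots,s_d\}$,
$$g_{Z}^{(d)}(u) \;=\; \prod_{j=1}^{d}\delta\nu_{Z}^{+}(s_j) \;\in\; \Sym^d(\Sym^4\E_u)\otimes \LL_u^{-d},$$
using the canonical identifications $\E_{s_j}\simeq \E_u$ and $\LL_{s_j}\simeq \LL_u$ coming from the base change $X={\XG}\times_{{\AG}}S$. Under the projection $\Sym^d\Sym^4\to \Sym^{4d}$, which is just the natural polynomial multiplication, this maps to
$$f_{Z}^{(d)}(u) \;=\; \delta\nu_{Z}^{+}(s_1)\cdots \delta\nu_{Z}^{+}(s_d) \;\in\; \Sym^{4d}\E_u\otimes \LL_u^{-d}.$$

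Next I would locate a point where this product is nonzero. Assuming $S$ is connected (as is natural for a degree-$d$ etale cover), the zero locus $V\subset S$ of $\delta\nu_{Z}^{+}$ is a proper analytic subset of $S$, hence $\dim V<\dim S=\dim U$. Since $f$ is finite etale, $\dim f(V)=\dim V<\dim U$, so $f(V)$ is a proper analytic subset of $U$. For any $u\in U\setminus f(V)$ and every $s_j\in f^{-1}(u)$, the value $\delta\nu_{Z}^{+}(s_j)$ is a nonzero element of $\Sym^4\E_u\otimes \LL_u^{-1}$.

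Finally, a choice of basis of $\E_u^{\vee}$ and a nonzero element of $\LL_u^{-1}$ identifies $\Sym^{\bullet}\E_u$ with the polynomial ring ${\C}[x_1,\ldots,x_g]$, and each $\delta\nu_{Z}^{+}(s_j)$ with a nonzero homogeneous polynomial of degree $4$. Since the polynomial ring is an integral domain, their product is a nonzero homogeneous polynomial of degree $4d$, whence $f_{Z}^{(d)}(u)\ne 0$, and therefore $f_{Z}^{(d)}\not\equiv 0$. The only mild subtlety is the identification of the representation-theoretic projection $\Sym^d\Sym^4\to \Sym^{4d}$ with ordinary polynomial multiplication; this is standard for the symmetric algebra in characteristic zero and causes no real obstacle.
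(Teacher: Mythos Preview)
Your proof is correct and follows essentially the same approach as the paper: both recognize that $e_d$ is the full product, that the projection $\Sym^d\Sym^4\to\Sym^{4d}$ is polynomial multiplication, and that at a generic point of $U$ all $d$ fiber values are nonzero so their product is a nonzero degree-$4d$ form. Your version is somewhat more explicit about why such a generic point exists (via the dimension of the zero locus and finiteness of $f$), and you correctly flag the implicit connectedness assumption on $S$, which the paper leaves tacit.
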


\begin{proof}
Let $u\in U$. 
Let $F_1, \cdots, F_d$ be the quartic forms corresponding to 
the values of $\delta\nu_{Z}^{+}$ at the $d$ points $f^{-1}(u)$. 
If we send $g_{Z}^{(d)}(u)=e_{d}(F_1, \cdots, F_d)$ by ${\Sym}^d{\Sym}^4\to {\Sym}^{4d}$, 
these quartic forms are multiplied as polynomials 
to yield the degree $4d$ polynomial $F_{1}\cdots F_{d}$.  
Therefore, if $u$ is generic so that  
the $d$ values of $\delta\nu_{Z}^{+}$ at $f^{-1}(u)$ are all nonzero, 
then $f_{Z}^{(d)}$ is nonzero at $u$. 
\end{proof}

Let us comment on some other possible constructions. 

\begin{remark}
When $g\geq 2$, we could also look at other irreducible summands of ${\Sym}^i{\Sym}^4$. 
This yields Siegel modular forms of other weights. 
We decided to work only with the main component ${\Sym}^{4i}$ 
because of its explicit nature and also for the above nonvanishing property. 
\end{remark}

\begin{remark}
When $g\geq 2$, we could also look at the non-primitive part of $\delta\nu_{Z}$. 
By \eqref{eqn: non-primitive H1}, this produces Siegel modular forms of (virtual) weight 
$\wedge^{2}{\Sym}^{2}\otimes \det^{-1}$ when $g=3$, and weight ${\Sym}^2$ when $g=2$. 
The significance of considering the primitive part will be explained in 
\S \ref{ssec: decomposable} and \S \ref{ssec: rigidity}. 
\end{remark} 

\begin{remark}
We may replace ${\XG}\to {\AG}$ with other family of abelian varieties over ${\AG}$ 
which is relatively isogenous to ${\XG}$. 
Indeed, what was essential in Theorem \ref{thm: main construction} 
was the identification of the Hodge bundles and the automorphic vector bundles, 
and this is valid also for other such families. 
\end{remark}

\begin{remark}
When the etale cover $S$ itself has a nice moduli interpretation and 
a satisfactory theory of ``modular forms'' on $S$ is available, 
we could directly study $\delta\nu_{Z}$ itself as such a generalized modular form on $S$ 
without pushing it down to ${\AG}$. 
This is not the approach taken in this paper, 
but what we have in mind is the moduli space $\mathcal{M}_{3}\to \mathcal{A}_{3}$ of genus $3$ curves 
with the theory of Teichm\"uller modular forms (\cite{Ic}, \cite{CFvdG2}). 
See \cite{Hain3} for this direction.   
\end{remark}

\begin{remark}
Another interesting source of normal functions is 
the extension classes of the mixed Hodge structures on the fundamental groups of curves (\cite{Hain}). 
It is known that the first nontrivial extension class corresponds to the Ceresa cycle in the case $g=3$ (\cite{Pu}) 
and to the Collino cycle in the case $g=2$ (\cite{Colombo}). 
The higher extension classes also give Siegel modular forms; 
the admissibility in this case is proved in \cite{Hain2}. 
\end{remark}

\subsection{On the primitive part}\label{ssec: decomposable}

In this subsection we let $g=2, 3$. 
Since Griffiths, the primitive part of normal functions has been used to detect various nontriviality of cycles. 
In our situation, this is expressed in the following form. 

\begin{proposition}\label{prop: decomposable}
Let $Z$ be as in \S \ref{ssec: main construction}. 
Assume that for very general $s\in S$, 
the cycle $Z_{s}$ is algebraically equivalent to zero in the case $g=3$, 
and decomposable in the case $g=2$. 
Then $\delta\nu_{Z}^{+}\equiv 0$. 
\end{proposition}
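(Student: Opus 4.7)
The plan is to exploit the Lefschetz decomposition \eqref{eqn: Lef Koszul}. Since the polarization $\omega$ is a flat rational section of $\mathcal{H}^{2}$, the Lefschetz operator $L=\omega\wedge\cdot$ commutes with the Gauss-Manin connection and induces a splitting of torus bundles
\begin{equation*}
J^{p,n}(X/S)=J^{p,n}_{\rm prim}(X/S)\oplus L(J^{p-1,n-1}(X/S)),
\end{equation*}
and accordingly $\nu_{Z}=\nu_{Z}^{+}+\nu_{Z}^{\rm Lef}$, where $\delta\nu_{Z}^{+}$ is the infinitesimal invariant of the primitive normal function $\nu_{Z}^{+}$. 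I would then argue that it suffices to prove $\nu_{Z}^{+}\equiv 0$: any continuous section of the Hausdorff space $J^{p,n}_{\rm prim}(X/S)$ that agrees with the zero section on a Euclidean dense subset must equal the zero section. Very general points of $S$ form a Euclidean dense subset (Baire category), so the task reduces to showing that for very general $s\in S$, the class $\nu_{Z}(s)$ lies in the Lefschetz subtorus $L(J^{p-1,n-1}(X_{s}))$.

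For $g=3$, algebraic triviality of $Z_{s}$ places $\nu(Z_{s})$ in the algebraic part $J^{2}(X_{s})_{\rm alg}$ of the intermediate Jacobian. By Murre's representability theorem, $J^{2}(X_{s})_{\rm alg}$ is an abelian subvariety of $J^{2}(X_{s})$, and therefore corresponds to a sub-Hodge-structure of $H^{3}(X_{s},\Q)$ of pure type $\{(2,1),(1,2)\}$. For a very general abelian threefold the Mumford-Tate group equals $\mathrm{Sp}_{6}$, and $H^{3}(X_{s},\Q)=H^{3}_{\rm prim}\oplus L(H^{1}(X_{s}))$ is the decomposition into two distinct $\mathrm{Sp}_{6}$-isotypic components; since $H^{3}_{\rm prim}$ carries a nonzero $(3,0)$-component, the only admissible sub-HS is $L(H^{1}(X_{s}))$, whence $\nu(Z_{s})\in L(J^{1}(X_{s}))$.

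For $g=2$, a decomposable element of $CH^{2}(X_{s},1)$ is the image, under the external product $\mathrm{Pic}(X_{s})\otimes_{\Z}k^{\times}\to CH^{2}(X_{s},1)$, of a class $D\otimes \alpha$. By the KLM formula \cite{KLM}, its regulator in $J^{2,1}(X_{s})=H^{2}(X_{s},\C)/(F^{2}+H^{2}(X_{s},\Z))$ is a $\C$-multiple of $c_{1}(D)$, hence lies in the image of $\mathrm{NS}(X_{s})\otimes_{\Z}\C$. For a very general abelian surface $\mathrm{NS}(X_{s})=\Z\omega$, so this image equals $\C\omega$, which is exactly the Lefschetz summand of $J^{2,1}(X_{s})$.

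The main obstacle is the cycle-theoretic identification in Case $g=3$: that the algebraic part of $J^{2}$ coincides with the Lefschetz subtorus at very general moduli. This rests on Murre's theorem together with the Mumford-Tate analysis of $H^{3}$ of a generic abelian threefold. Case $g=2$ is more direct, reducing to the explicit KLM formula for decomposable classes combined with the Picard rank $1$ property of a very general abelian surface.
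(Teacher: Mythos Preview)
Your argument is correct and follows essentially the same route as the paper's: reduce to showing that $\nu_{Z}$ lands in the Lefschetz sub-torus for very general $s$ (via $J^{2}_{\rm alg}=J^{2}_{\rm Lef}$ when $g=3$, and ${\rm NS}=\Z\omega$ when $g=2$), then extend to all $s$ by continuity. Two minor remarks: the paper works with the quotient $J/J_{\rm Lef}$ rather than a direct-sum splitting (which is cleaner over $\Z$), and the index in your displayed splitting should read $J^{p-1,n}$ rather than $J^{p-1,n-1}$; on the other hand, your Mumford--Tate justification of $J^{2}_{\rm alg}=J^{2}_{\rm Lef}$ at very general moduli is more explicit than the paper's bare assertion.
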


Here ``very general points'' means 
points in the complement of countably many divisors. 
A $(2, 1)$-cycle on an abelian surface $A$ is called \textit{decomposable} 
if it is in the image of the intersection product 
${\rm Pic}(A)\otimes_{{\Z}} {\C}^{\ast} \to {\rm CH}^2(A, 1)$. 

\begin{proof}
We first consider the case $g=3$. 
For a polarized abelian $3$-fold $A=X_{s}$, 
let $J^{2,0}(A)_{alg}$ be the sub abelian variety of 
the intermediate Jacobian $J^{2,0}(A)$ (usually written as $J^3(A)$) 
corresponding to the maximal sub ${\Q}$-Hodge structure of $H^3(A, {\Q})$ contained in $F^1H^3(A)$. 
The Abel-Jacobi invariant of a $1$-cycle algebraically equivalent to zero takes values in $J^{2,0}(A)_{alg}$. 
On the other hand, we let 
$J^{2,0}(A)_{Lef}$ be the sub abelian variety of $J^{2,0}(A)$ corresponding to the image of the Lefschetz operator
$H^1(A, {\Q})\hookrightarrow H^3(A, {\Q})$. 
This is isogenous to ${\rm Pic}^{0}(A)$. 
Clearly we have 
$J^{2,0}(A)_{Lef} \subset J^{2,0}(A)_{alg}$, 
and $J^{2,0}(A)_{Lef} = J^{2,0}(A)_{alg}$ holds when $A$ is very general. 

Now, by our assumption on $Z$, 
$\nu(Z_{s})$ takes values in $J^{2,0}(A)_{alg} = J^{2,0}(A)_{Lef}$ for very general $s\in S$. 
Since $J^{2,0}(A)_{Lef}$ is stable under deformation (while $J^{2,0}(A)_{alg}$ is not so), 
we see that $\nu(Z_{s})$ takes values in $J^{2,0}(A)_{Lef}$ for every $s\in S$. 
Since $\delta\nu_{Z}^{+}$ is the infinitesimal invariant of the image of $\nu_{Z}$ in the quotient of 
$J^{2,0}(X/S)$ by $J^{2,0}(X/S)_{Lef}$, 
it vanishes identically. 

The case $g=2$ is similar: 
instead of $J^{2,0}(A)_{Lef} \subset J^{2,0}(A)_{alg}$ 
we use the sub tori ${\Z}H\otimes {\C}^{\ast}\subset {\rm NS}(A)\otimes {\C}^{\ast}$ of $J^{2,1}(A)$ 
where $H$ is the polarization. 
The Abel-Jacobi invariant of a decomposable cycle is contained in ${\rm NS}(A)\otimes {\C}^{\ast}$, 
while we have ${\rm NS}(A)={\Z}H$ for very general $A$. 
With this modification, the above argument works in the case $g=2$. 
\end{proof}

Proposition \ref{prop: decomposable} shows that 
passing to the primitive part of $\delta\nu_{Z}$ has the effect of dividing out 
the contribution from families of relatively accessible cycles as above.

\subsection{Rigidity}\label{ssec: rigidity}

In this subsection we let $1\leq g \leq 3$. 
We fix ${\AG}$, but allow $S\to U$ to vary. 
The second significance of considering the primitive part is the following rigidity property. 

\begin{proposition}\label{prop: rigidity}
There are only countably many Siegel modular forms that can be obtained as $f_{Z}^{(i)}$ 
for a cycle family $Z$ as in \S \ref{ssec: main construction}. 
\end{proposition}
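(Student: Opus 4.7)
The plan is to reduce the statement to a rigidity property of the primitive infinitesimal invariant, and then to invoke the countability of admissible normal functions modulo the ``accessible'' sub-bundle considered in \S \ref{ssec: decomposable}. First I would observe that the construction $\delta\nu_{Z}^{+}\rightsquigarrow f_{Z}^{(i)}$ is given by an explicit polynomial operation (the $i$-th elementary symmetric polynomial followed by projection to the ${\Sym}^{4i}$-summand), so it suffices to bound the cardinality of the set of primitive infinitesimal invariants $\delta\nu_{Z}^{+}$ that can arise, regarded as multi-valued sections over ${\AG}$. Next, since ${\rm Sp}(\Lambda)$ is finitely generated, it has only countably many finite-index subgroups ${\G}'$ not containing $-1$; by passing to a Galois closure, we may assume the etale cover $f\colon S\to U$ is the restriction of $\mathcal{A}_{{\G}'}\to {\AG}$ to some open subset. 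Because $f_{Z}^{(i)}$ is meromorphic on ${\AG}$ and independent of the choice of nonempty $U$, the only remaining continuous parameter is the cycle family $Z$ itself.

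The key step is a rigidity statement for admissible normal functions. For the universal family $\pi\colon \mathcal{X}_{{\G}'}\to \mathcal{A}_{{\G}'}$, I would assert that the set of admissible normal functions with values in $J^{2,3-g}(\mathcal{X}_{{\G}'}/\mathcal{A}_{{\G}'})$, modulo the subgroup generated by locally constant sections and by those factoring through the Lefschetz sub-bundle (resp.\ decomposable sub-bundle when $g=2$) of \S \ref{ssec: decomposable}, is countable. This is because such normal functions correspond to extension classes in the category of admissible variations of mixed Hodge structure in the sense of \cite{Sa} and \cite{GG}, and isomorphism classes of such extensions modulo the accessible part are controlled by integral Hodge-theoretic data on a smooth toroidal compactification of $\mathcal{A}_{{\G}'}$, which form a countable set. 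By the argument of Proposition \ref{prop: decomposable}, the primitive infinitesimal invariant $\delta\nu_{Z}^{+}$ depends only on the image of $\nu_{Z}$ in this countable quotient, whence the conclusion.

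The main obstacle lies in the precise formulation of the rigidity statement in the higher Chow setting $n=3-g\geq 1$, where the literature is less developed than in the classical case $n=0$ (compare \cite{SZ}, \cite{GGK}). A more direct fallback would be the following: any cycle $Z$ may be spread out over a finitely generated subfield of ${\C}$, giving countably many cycles up to this spreading; the topological invariant of the resulting normal function then takes countably many values, and the primitive part of $\delta\nu_{Z}$ eliminates the remaining continuous ambiguity coming from the Lefschetz/decomposable part, so that the induced $f_{Z}^{(i)}$ ranges over a countable set.
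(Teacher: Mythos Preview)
Your main proposed argument has a genuine gap at the ``key step.'' You assert that admissible normal functions modulo the accessible sub-bundle are countable because they ``correspond to extension classes \dots\ controlled by integral Hodge-theoretic data on a smooth toroidal compactification, which form a countable set.'' This is not justified, and in fact the paper explicitly warns against this reasoning: immediately after the statement, it says that the result ``is a consequence of some rigidity results in the theory of algebraic cycles, rather than a general property of normal functions (not necessarily of geometric origin).'' The space of admissible normal functions for a given variation is in general not countable modulo the Lefschetz/decomposable part; the countability only holds for those normal functions that actually arise from algebraic cycles, and establishing this requires input from K-theory, not just Hodge theory.

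What the paper does instead is argue pointwise on a very general fiber $A$. It shows (Lemma \ref{lem: rigidity NF}) that the image of the \emph{primitive} Abel-Jacobi map is countable for very general $A$: for $g=1$ this is Beilinson rigidity \cite{Be}, for $g=3$ it is the countability of the Griffiths group ${\rm Griff}^2(A)$ together with $J^{2,0}(A)_{\rm prim}=J^{2,0}(A)_{tr}$ generically, and for $g=2$ it is the result of M\"uller-Stach \cite{MS} on the image of the transcendental regulator for ${\rm CH}^2(A,1)_{\rm ind}$. Since a holomorphic normal function on a small open set $V$ is determined by its values at countably many (Zariski dense) points, the set of primitive normal functions $\nu_Z^{+}|_{\tilde V}$ is countable, and hence so is the set of $f_Z^{(i)}$. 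Your fallback spreading-out sketch is closer in spirit, but it still does not explain why passing to the primitive part kills all continuous variation; the paper's point is precisely that this vanishing of continuous parameters is a nontrivial theorem in each of the three cases.
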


This is a consequence of some rigidity results in the theory of algebraic cycles, 
rather than a general property of normal functions (not necessarily of geometric origin). 
We begin with some preliminaries. 
For a polarized abelian $g$-fold $A$, we denote 
\begin{equation*}
J^{2,3-g}(A)_{{\rm prim}} = 
\begin{cases}
J^{2,0}(A)/J^{2,0}(A)_{Lef} & g=3, \\
J^{2,1}(A)/{\Z}H\otimes {\C}^{\ast} & g=2, \\ 
J^{2,2}(A) & g=1, 
\end{cases}
\end{equation*}
where $J^{2,0}(A)_{Lef}$ and $H$ are as in the proof of Proposition \ref{prop: decomposable}. 
For a cycle family $Z$, we denote by $\nu_{Z}^{+}$ 
the image of $\nu_{Z}$ by the projection $J^{2, 3-g}(X/S)\to J^{2, 3-g}(X/S)_{{\rm prim}}$. 
The primitive part $\delta\nu_{Z}^{+}$ of $\delta\nu_{Z}$ is the infinitesimal invariant of $\nu_{Z}^{+}$. 

Let $V$ be a small open set of $\mathcal{A}_{{\G}}^{\circ}$ (in the classical topology). 
By a cycle family over $V$, we mean 
the restriction of a cycle family $Z$ on an etale base-changed family $X\to S$ 
to an open set $\tilde{V}\subset S$ which is projected isomorphically to $V$. 
We let $\mathcal{X}_{V}=\mathcal{X}_{{\D}}|_{V}$ 
be the universal family over $V$. 
Then, via the isomorphism $\tilde{V}\simeq V$, 
$\nu_{Z}^{+}|_{\tilde{V}}$ is regarded as a section of $J^{2,3-g}(\mathcal{X}_{V}/V)_{{\rm prim}}$ over $V$. 

\begin{lemma}\label{lem: rigidity NF}
There are only countably many sections of $J^{2,3-g}(\mathcal{X}_{V}/V)_{{\rm prim}}$ 
that arises as $\nu_{Z}^{+}|_{\tilde{V}}$ for a cycle family $Z$ over $V$. 
\end{lemma}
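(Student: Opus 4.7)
The plan is to show that the map $Z \mapsto \nu_Z^+|_{\tilde V}$ factors through a countable set. I will separate the argument into a rigidity step that reduces everything to a single fiber, and a countability step at that fiber.

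First, I would fix a very general point $v_{0}\in V$ and consider pointwise evaluation
\[ \nu_Z^+|_{\tilde V} \; \longmapsto \; \nu_Z^+(v_0) \: \in \: J^{2, 3-g}(\mathcal{X}_{v_0})_{\prim}. \]
The first key claim is that this evaluation is injective on the image of cycle families, i.e.~an admissible primitive normal function on $V$ is determined by its value at $v_0$. The point is that $J^{2,3-g}(\mathcal{X}_V/V)_{\prim}$ has been constructed precisely so that the ``trivial'' sub-tori (the Lefschetz part when $g=3$, the $\mathbb{Z}H \otimes \mathbb{C}^{\ast}$ part when $g=2$) which could support non-trivial flat sections are quotiented out. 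A standard monodromy argument for the $\G$-representation on the underlying VHS then shows that $J^{2,3-g}(\mathcal{X}_V/V)_{\prim}$ admits no non-zero global flat sections over $V$; combined with the admissibility of $\nu_Z^{+}$ from \S \ref{ssec: NF}, this forces the difference of two normal functions with the same value at $v_0$ to vanish identically.

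Second, I would bound the set of possible values $\nu_Z^+(v_0)$. For any cycle family $Z$ over $V$, the value $\nu_Z^+(v_0)$ is the image of the restriction $Z_{v_0}$ under the composition of the Abel-Jacobi/regulator map with the projection to $J^{2,3-g}(\mathcal{X}_{v_0})_{\prim}$. Hence what is needed is that the image of this quotient regulator is countable on a very general fiber. In each case, one appeals to a known rigidity/countability result for higher Chow cycles on generic abelian varieties:
\begin{itemize}
\item When $g=3$, the Abel-Jacobi image of $1$-cycles modulo the Lefschetz part on a very general principally polarized abelian $3$-fold is countable, a consequence of rigidity results going back to Nori \cite{No} (cf.~the proof of Proposition \ref{prop: decomposable}).
\item When $g=2$, the indecomposable part of the regulator image of $CH^2(A,1)$ on a very general abelian surface is countable.
\item When $g=1$, the regulator image of $K_2(E)$ on a generic elliptic curve $E$ is countable (Bloch, Beilinson).
\end{itemize}
Composing the injection of Step 1 with the countability of Step 2 yields the lemma.

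The main obstacle is the rigidity step: while the definition of the primitive Jacobian bundle was designed to kill the obvious sources of non-rigidity, one must verify carefully that the remaining space of admissible holomorphic sections vanishing at $v_{0}$ is zero. This requires using both the irreducibility of the relevant $\G$-representation on the primitive Hodge bundle and the admissibility of $\nu_Z^{+}$ across any boundary in the sense of \cite{GG}, \cite{Sa}, so that no non-trivial horizontal deformation of the zero section can arise from an algebraic cycle family.
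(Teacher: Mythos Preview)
Your Step~2 is correct and matches the paper's fiberwise input (Beilinson rigidity \cite{Be} for $g=1$, countability of the Griffiths group for $g=3$, and \cite{MS} Corollary~3.6 for $g=2$). The problem is Step~1. You argue that evaluation at a single very general point $v_{0}$ is injective on admissible primitive normal functions, because (a) the primitive Jacobian bundle has no nonzero flat sections and (b) the normal functions are admissible. But the difference $\nu_{1}-\nu_{2}$ of two admissible normal functions is merely another admissible normal function, not a flat section: quasi-horizontality only says that a local lift satisfies $\nabla\tilde\nu\in(\mathcal{F}^{1}/\mathcal{F}^{2})\otimes\Omega^{1}_{V}$, not that $\nabla\tilde\nu=0$. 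Admissibility is a growth condition at the boundary and confers no interior rigidity of this kind. So (a) simply does not apply to $\nu_{1}-\nu_{2}$, and there is no reason an admissible normal function vanishing at one point must vanish identically. Your final paragraph acknowledges this as ``the main obstacle'' but does not close it; invoking irreducibility of the monodromy still only controls \emph{flat} sections.

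The paper does not attempt single-point injectivity. It argues instead that the primitive Abel-Jacobi image is countable at \emph{every} very general fiber: for $g=2,3$ one uses that for very general $A$ the primitive Jacobian coincides with the transcendental quotient $J^{2,3-g}(A)_{tr}$, and then invokes countability of the Griffiths group (for $g=3$) or of the image of $\nu_{tr}$ via \cite{MS} (for $g=2$). Having countable image at each point of a countable Zariski dense set of very general points, together with the fact that a holomorphic section is determined by its values on such a set, gives the conclusion. The argument is thus purely pointwise and does not rely on any global rigidity of normal functions.
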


Proposition \ref{prop: rigidity} follows from Lemma \ref{lem: rigidity NF} 
because $f_{Z}^{(i)}$ is determined by its restriction to $V$, 
which in turn is a combination of $\delta\nu_{Z}^{+}|_{\tilde{V}}$ 
for various $\tilde{V}$ over $V$, 
hence a combination of some sections from a pool of countably many ones. 

\begin{proof}[(Proof of Lemma \ref{lem: rigidity NF})]
We first consider the case $g=1$ which is simplest. 
The Beilinson rigidity (\cite{Be}, see also \cite{MS} Corollary 3.3) 
says that the image of the Abel-Jacobi map 
${\rm CH}^{2}(A, 2)\to J^{2,2}(A)$ 
for an elliptic curve $A$ is countable. 
Hence the normal functions $\nu_{Z}|_{V}$ can take only countably many values at each point of $V$. 
Since a normal function is determined by its values at countably many (Zariski dense) points, 
this shows that there are only countably many sections of the form $\nu_{Z}|_{V}$. 

Next let $g=3$. 
For a polarized abelian $3$-fold $A$, we consider the commutative diagram 
\begin{equation*}
\xymatrix@C+1pc{
{\rm CH}^{2}(A)_{{\rm hom}} \ar[r]^{\nu^{+}} \ar@{->>}[d] & J^{2,0}(A)_{{\rm prim}} \ar@{->>}[d] \\ 
{\rm Griff}^2(A) \ar[r] & J^{2,0}(A)_{tr}  
}
\end{equation*}
where $J^{2,0}(A)_{tr}=J^{2,0}(A)/J^{2,0}(A)_{alg}$, 
${\rm CH}^{2}(A)_{{\rm hom}}$ is the Chow group of nullhomologous $1$-cycles, and 
${\rm Griff}^2(A)$ is their Griffiths group. 
If $A$ is very general, we have $J^{2,0}(A)_{{\rm prim}}=J^{2,0}(A)_{tr}$. 
Since ${\rm Griff}^2(A)$ is a countable group, 
the image of the primitive Abel-Jacobi map $\nu^{+}$ is countable for very general $A$. 
As in the case $g=1$, this implies that  
there are at most countably many sections of the form $\nu^{+}_{Z}|_{V}$. 

The case $g=2$ is similar. 
We replace the above diagram by 
\begin{equation*}
\xymatrix@C+1pc{
{\rm CH}^{2}(A, 1) \ar[r]^{\nu^{+}} \ar@{->>}[d] & J^{2,1}(A)_{{\rm prim}} \ar@{->>}[d] \\ 
{\rm CH}^{2}(A, 1)_{{\rm ind}} \ar[r]^{\nu_{tr}} & J^{2,1}(A)_{tr}  
}
\end{equation*}
where 
${\rm CH}^{2}(A, 1)_{{\rm ind}}= {\rm CH}^{2}(A, 1)/{\rm Pic}(A)\otimes {\C}^{\ast}$ 
is the indecomposable part of ${\rm CH}^{2}(A, 1)$ 
and 
$J^{2,1}(A)_{tr}= J^{2,1}(A)/{\rm NS}(A)\otimes {\C}^{\ast}$.  
Again we have $J^{2,1}(A)_{tr}= J^{2,1}(A)_{{\rm prim}}$ for very general $A$, 
and now the image of $\nu_{tr}$ is countable by \cite{MS} Corollary 3.6. 
\end{proof}

\section{Meromorphic Siegel operator}\label{sec: Siegel operator}

The Siegel operator for holomorphic vector-valued Siegel modular forms is well-established (\cite{We}). 
In this paper we need its extension to the meromorphic case. 
Since we could not find a systematic treatment in the literature, 
we prepare it in this section in a minimally required level. 
This is preliminaries for the next \S \ref{sec: elevator}. 
The \S \ref{ssec: toroidal} and \S \ref{ssec: Siegel auto VB} are recollection of 
the partial toroidal compactification and the holomorphic Siegel operator respectively. 
The meromorphic case is considered in \S \ref{ssec: Siegel meromorphic}. 
We keep the setting of the previous sections, but assume $g>1$ in this section. 

\subsection{Partial toroidal compactification}\label{ssec: toroidal}

Let $I$ be a $1$-dimensional subspace of $\Lambda_{{\Q}}$. 
In this subsection we recall the partial toroidal compactification of ${\AG}$ over the $I$-cusp following \cite{AMRT}. 

Let $P(I)$ be the stabilizer of $I$ in ${\rm Sp}(\Lambda_{{\Q}})$. 
This has the filtration 
\begin{equation*}
U(I) \lhd W(I) \lhd P(I), 
\end{equation*}
where $W(I)$ is the kernel of $P(I)\to {\rm GL}(I)\times {\rm Sp}(I^{\perp}/I)$ (the unipotent radical of $P(I)$) 
and $U(I)$ is the kernel of  $P(I)\to {\rm GL}(I^{\perp})$. 
It turns out that $W(I)$ has the structure of a Heisenberg group with center $U(I)$,  
and we have natural isomorphisms $U(I)\simeq {\Sym}^2I$ and $W(I)/U(I)\simeq (I^{\perp}/I)\otimes I$. 

Naturally associated to $I$ is a $P(I)$-equivariant two-step fibration 
\begin{equation}\label{eqn: Siegel domain}
{\D} \longrightarrow \mathcal{V}_{I} \stackrel{\phi}{\longrightarrow} {\D}_{I}, 
\end{equation}
known as the \textit{Siegel domain realization}, 
where ${\D}\to \mathcal{V}_{I}$ is a fibration of upper half planes 
and $\mathcal{V}_{I}\to {\D}_{I}$ is an affine space bundle over the $I$-cusp ${\D}_{I}$. 
If we choose a section of $\phi$, 
$\mathcal{V}_{I}$ gets isomorphic to the dual of the Hodge bundle on ${\D}_{I}$. 
The group $U(I)$ acts trivially on $\mathcal{V}_{I}$, 
and acts on the fibers of ${\D}\to \mathcal{V}_{I}$ by translation of real parts. 
In the $C^{\infty}$-level, $\mathcal{V}_{I}\to {\D}_{I}$ is a principal $W(I)_{{\R}}/U(I)_{{\R}}$-bundle. 

Given a finite-index subgroup ${\G}$ of ${\rm Sp}(\Lambda)$ with $-1\not\in {\G}$, we write 
\begin{equation*}
{\G}(I) = P(I)\cap {\G}, \qquad 
U(I)_{{\Z}} = U(I)\cap {\G}, \qquad 
\overline{{\G}(I)} = {\G}(I)/U(I)_{{\Z}}. 
\end{equation*}
The quotient ${\D}/U(I)_{{\Z}}$ is a punctured-disc bundle over $\mathcal{V}_{I}$. 
Filling the origins of the punctured-discs, we obtain a disc bundle 
$\overline{{\D}/U(I)_{{\Z}}} \to \mathcal{V}_{I}$. 
The boundary divisor of $\overline{{\D}/U(I)_{{\Z}}}$ 
is the zero section, and is naturally identified with $\mathcal{V}_{I}$. 
We write 
$\Delta_{I} = \mathcal{V}_{I}/\overline{{\G}(I)}$ 
for the boundary divisor of the quotient $\overline{{\D}/U(I)_{{\Z}}}/\overline{{\G}(I)}$. 

By the reduction theory, ${\G}$-equivalence reduces to ${\G}(I)$-equivalence 
in a deleted neighborhood of the boundary of $\overline{{\D}/U(I)_{{\Z}}}$. 
This enables us to glue the Siegel modular variety ${\AG}={\D}/{\G}$ and 
a neighborhood of $\Delta_{I}$ in $\overline{{\D}/U(I)_{{\Z}}}/\overline{{\G}(I)}$ 
along the deleted neighborhood. 
This produces a partial compactification $\mathcal{A}_{{\G}}'$ of ${\AG}$ with boundary divisor $\Delta_{I}$: 
\begin{equation*}
\mathcal{A}_{{\G}}' = {\AG}\sqcup \Delta_{I}.  
\end{equation*}
This $\mathcal{A}_{{\G}}'$ is contained in every full toroidal compactification of ${\AG}$ 
as the inverse image of the Zariski open set ${\AG}\sqcup {\AI}$ of ${\AGast}$. 
%
The boundary projection $\Delta_{I} \to {\AI}$ is the quotient of $\mathcal{V}_{I}\to {\D}_{I}$ by $\overline{{\G}(I)}$. 


\subsection{Siegel operator for automorphic vector bundles}\label{ssec: Siegel auto VB}

In this subsection we recall the Siegel operator for holomorphic Siegel modular forms (\cite{We}, \cite{vdG}).  
For our purpose, we need its reformulation at the level of partial toroidal compactification as given in \cite{Ma2} \S 3.  

Let $\lambda=(\lambda_{1}, \cdots, \lambda_{g})$ be a highest weight for ${\rm GL}(g, {\C})$. 
The vector bundle ${\El}$ descends to a vector bundle on ${\D}/U(I)_{{\Z}}$, again denoted by ${\El}$. 
We choose a maximal isotropic subspace $I_0\subset \Lambda_{{\Q}}$ containing $I$, 
which we fix in what follows. 
Since the $I_0$-trivialization ${\El}\simeq V(I_0)_{\lambda}\otimes \mathcal{O}_{{\D}}$ is 
invariant under $U(I_0)\supset U(I)$, it descends to ${\D}/U(I)_{{\Z}}$. 
We extend ${\El}$ to a vector bundle over $\overline{{\D}/U(I)_{{\Z}}}$, still denoted by ${\El}$, 
so that this isomorphism extends to ${\El}\simeq V(I_0)_{\lambda}\otimes \mathcal{O}_{\overline{{\D}/U(I)_{{\Z}}}}$. 
This is called the \textit{canonical extension} of ${\El}$. 
It is in fact independent of the choice of $I_0$. 

When $\lambda\geq 0$, every holomorphic modular form of weight $\lambda$ 
extends to a holomorphic section of ${\El}$ over $\overline{{\D}/U(I)_{{\Z}}}$. 
We denote by $\mu_{I}(f)$ the vanishing order of $f$ along the boundary divisor $\mathcal{V}_{I}$. 
By construction, this is the same as the vanishing order of $f$ as a $V(I_0)_{\lambda}$-valued function. 
Explicitly, it can be computed as follows. 
Let $f=\sum_{l}a(l)q^{l}$ be the Fourier expansion with respect to $I_{0}$ as in \eqref{eqn: Fourier expansion}. 
Let $v_{I}$ be the positive generator of $U(I)_{{\Z}}\simeq {\Z}$. 
For $l\in U(I_0)_{{\Z}}^{\vee}$, the pairing $(v_{I}, l)\in {\Z}$ measures the vanishing order of the function $q^l$ along $\mathcal{V}_{I}$. 
Therefore we have 
\begin{equation}\label{eqn: mu_I(f)}
\mu_{I}(f) = \min \{ \: (v_{I}, l) \: | \: a(l)\ne 0 \: \}. 
\end{equation}

Let $P(I, I_0)$ be the stabilizer of $I$ in ${\rm GL}(I_0)$,  
and $U(I, I_0)$ be the kernel of the natural map $P(I, I_0) \to {\rm GL}(I)\times {\rm GL}(I_0/I)$.  
Then $U(I, I_0)$ is the unipotent radical of $P(I, I_0)$ and isomorphic to $I\otimes (I_0/I)^{\vee}$. 
We denote by $V(I_0)_{\lambda}^{I}$ the $U(I, I_0)$-invariant part of $V(I_0)_{\lambda}$. 
Then 
\begin{equation}\label{eqn: U-inv decomp}
V(I_0)_{\lambda}^{I} \simeq V(I_0/I)_{\lambda'}\boxtimes (I_{{\C}}^{\vee})^{\otimes \lambda_{g}} 
\end{equation}
as a representation of 
${\rm GL}(I_{0}/I)\times {\rm GL}(I)$ 
where $\lambda'=(\lambda_1, \cdots, \lambda_{g-1})$. 
We let $\mathcal{E}_{\lambda}^{I} \subset {\El}$ be the image of 
$V(I_0)_{\lambda}^{I}\otimes \mathcal{O}_{\overline{{\D}/U(I)_{{\Z}}}}$ 
by the $I_0$-trivialization. 
This sub bundle does not depend on the choice of $I_0$. 
On the other hand, we write ${\E}_{I}$ for the Hodge bundle on ${\D}_{I}$ and ${\LL}_{I}=\det {\E}_{I}$. 

\begin{lemma}
The restriction $\mathcal{E}_{\lambda}^{I}|_{{\VI}}$ is isomorphic to $\phi^{\ast}({\E}_{I})_{\lambda'}$. 
\end{lemma}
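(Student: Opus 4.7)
The plan is to reduce the statement to the representation-theoretic identity \eqref{eqn: U-inv decomp} by trivializing both sides on ${\VI}$. The crucial geometric observation is that $I_{0}/I$ is a maximal rational isotropic subspace of the symplectic space $I^{\perp}/I$: it is isotropic because $I_{0}$ is; it lies in $I^{\perp}/I$ because $I_{0}\subset I^{\perp}$ (as $I\subset I_{0}$ and $I_{0}$ is isotropic); and it has the correct dimension $g-1$. Thus $I_{0}/I$ plays, for the cusp ${\D}_{I}$, exactly the role that $I_{0}$ plays for ${\D}$.

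First I would restrict the $I_{0}$-trivialization of the canonical extension of ${\El}$ to the boundary divisor ${\VI}$. Because $\mathcal{E}_{\lambda}^{I}$ is by definition the image of $V(I_{0})_{\lambda}^{I}\otimes \mathcal{O}$ under this trivialization, this immediately yields
\[
\mathcal{E}_{\lambda}^{I}|_{{\VI}} \;\simeq\; V(I_{0})_{\lambda}^{I}\otimes \mathcal{O}_{{\VI}}.
\]
In parallel, the analogous $(I_{0}/I)$-trivialization for the Hodge bundle ${\E}_{I}$ on ${\D}_{I}$ gives ${\E}_{I} \simeq (I_{0}/I)_{{\C}}^{\vee}\otimes \mathcal{O}_{{\D}_{I}}$; applying the Schur functor corresponding to $\lambda'$ and pulling back along $\phi$ then yields
\[
\phi^{\ast}({\E}_{I})_{\lambda'} \;\simeq\; V(I_{0}/I)_{\lambda'}\otimes \mathcal{O}_{{\VI}}.
\]

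Finally, the decomposition \eqref{eqn: U-inv decomp} furnishes a vector-space isomorphism $V(I_{0})_{\lambda}^{I} \simeq V(I_{0}/I)_{\lambda'}\otimes (I_{{\C}}^{\vee})^{\otimes \lambda_{g}}$. Since $I$ is one-dimensional, the factor $(I_{{\C}}^{\vee})^{\otimes \lambda_{g}}$ is a one-dimensional vector space, so once a generator of $I_{{\C}}$ is fixed it collapses to a trivial scalar factor, leaving $V(I_{0})_{\lambda}^{I} \simeq V(I_{0}/I)_{\lambda'}$ as vector spaces. Assembling the three identifications produces the claimed isomorphism of vector bundles on ${\VI}$. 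The only point requiring real care is that the first display genuinely holds for the canonical extension of ${\El}$ on ${\VI}$ and not merely in the interior; this is built into the construction of the canonical extension, whose $I_{0}$-trivialization extends across ${\VI}$ by definition, and $U(I, I_{0})$-invariance is a fiberwise linear condition preserved under this extension. The independence of $\mathcal{E}_{\lambda}^{I}$ from the choice of $I_{0}\supset I$, recorded in the paragraph preceding the lemma, then ensures that the resulting isomorphism is compatible with the intrinsic boundary geometry.
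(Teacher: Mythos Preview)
Your argument is correct and self-contained: both sides trivialize over ${\VI}$ via the $I_{0}$- and $(I_{0}/I)$-trivializations respectively, and the comparison of the constant fibers is exactly \eqref{eqn: U-inv decomp} up to the harmless one-dimensional factor $(I_{{\C}}^{\vee})^{\otimes \lambda_{g}}$.

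The paper proceeds differently. For $\lambda\geq 0$ it simply cites \cite{Ma2} Proposition 3.5, and then handles the general case by writing $\lambda=\lambda^{+}\otimes\det^{-k}$, observing that $U(I,I_{0})$ acts trivially on $\det I_{0}$ so that ${\E}_{\lambda}^{I}={\E}_{\lambda^{+}}^{I}\otimes{\LL}^{\otimes -k}$, and invoking the separately known isomorphism ${\LL}|_{{\VI}}\simeq\phi^{\ast}{\LL}_{I}$. In effect the paper splits the statement into a geometric core (outsourced to \cite{Ma2}) and an algebraic reduction by determinant twist; your proof instead runs the trivialization argument uniformly for all $\lambda$ at once. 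Your route is more elementary and avoids the external reference, while the paper's route makes the reduction-by-twist mechanism explicit, which is a pattern reused elsewhere (e.g.\ in Example \ref{example: Phi Sym det}). One small caveat: for the subsequent Siegel operator one eventually wants this isomorphism to be $\overline{{\G}(I)}$-equivariant so that it descends to $\Delta_{I}\to{\AI}$; your trivialization argument does yield this (both trivializations are equivariant for the relevant parabolic actions), but you may want to say so explicitly rather than leave it implicit.
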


\begin{proof}
When $\lambda \geq 0$, this is proved in \cite{Ma2} Proposition 3.5. 
If $\lambda_{g}<0$, we write $\lambda=\lambda^{+}\otimes {\rm det}^{-k}$ with $\lambda^{+}\geq0$. 
Then $V(I_0)_{\lambda}=V(I_0)_{\lambda^{+}}\otimes (\det I_0)_{{\C}}^{\otimes k}$. 
Since $U(I, I_0)$ acts on $\det I_0$ trivially, we have 
$V(I_0)_{\lambda}^{I}=V(I_0)_{\lambda^{+}}^{I}\otimes (\det I_0)_{{\C}}^{\otimes k}$. 
Therefore ${\E}_{\lambda}^{I}={\E}_{\lambda^{+}}^{I}\otimes {\LL}^{\otimes -k}$. 
Since ${\LL}|_{{\VI}}\simeq \phi^{\ast}{\LL}_{I}$, we have  
\begin{equation*}
{\E}_{\lambda}^{I}|_{\mathcal{V}_{I}} 
= ({\E}_{\lambda^{+}}^{I}|_{\mathcal{V}_{I}})\otimes ({\LL}^{\otimes -k}|_{{\VI}}) 
\simeq 
\phi^{\ast}({\E}_{I})_{(\lambda^{+})'}\otimes \phi^{\ast} {\LL}_{I}^{\otimes -k} 
\simeq 
\phi^{\ast}({\E}_{I})_{\lambda'}. 
\end{equation*}  
This proves our assertion. 
\end{proof}

We write $\Phi_{I}{\El}=({\E}_{I})_{\lambda'}$.  
The operation ${\El}\rightsquigarrow \Phi_{I}{\El}$ is composition of restriction and descent. 
This is the Siegel operator for automorphic vector bundles. 
In the level of representations of ${\rm GL}(I_0)\simeq {\rm GL}(g)$, 
this is the process of taking the invariant part for $U(I, I_0)$ and 
regarding it as a representation of ${\rm GL}(I_0/I)\simeq {\rm GL}(g-1)$ 
(i.e., forgetting the twist by $I$ in \eqref{eqn: U-inv decomp}). 

\begin{example}\label{example: Phi Sym det}
We have $\Phi_{I}{\Sym}^{j}{\E}={\Sym}^{j}{\E}_{I}$ and $\Phi_{I}{\LL}^{\otimes k}={\LL}_{I}^{\otimes k}$. 
More generally, we have 
\begin{equation*}
\Phi_{I}({\Sym}^{j}{\E} \otimes {\LL}^{\otimes k}) = {\Sym}^{j}{\E}_{I} \otimes {\LL}_{I}^{\otimes k}. 
\end{equation*}
\end{example}

For a holomorphic modular form $f$ of weight $\lambda \geq 0$, 
the restriction $f|_{{\VI}}$ takes values in $\mathcal{E}_{\lambda}^{I}|_{{\VI}}$ and  
is the pullback of a modular form $\Phi_{I}f$ of weight $\lambda'$ on ${\D}_{I}$, 
i.e., a section of $\Phi_{I}{\El}$ (\cite{Ma2} Proposition 3.6).  
This operation $f\mapsto \Phi_{I}f$, 
composition of restriction and descent, 
is the Siegel operator formulated at the level of toroidal compactification.  
We can pass to a more classical formulation as follows. 
If $f=\sum_{l}a(l)q^l$ is the Fourier expansion of $f$, then 
\begin{equation*}
f|_{{\VI}} = \sum_{l\in U(I)^{\perp}} a(l)q^{l}, \qquad a(l)\in V(I_{0})^{I}_{\lambda} \simeq V(I_{0}/I)_{\lambda'}. 
\end{equation*}
Since $U(I)^{\perp}\cap \mathcal{C}(I_0)$ equals to the boundary cone $\mathcal{C}(I_0/I)$ of $\mathcal{C}(I_0)$, 
the vectors $l$ here are actually contained in $\mathcal{C}(I_0/I)$. 
By the descent from ${\VI}$ to ${\D}_{I}$, 
we obtain the Fourier expansion of $\Phi_{I}f$: 
\begin{equation*}
\Phi_{I}f = \sum_{l\in \mathcal{C}(I_0/I)} a(l)q^{l}, \qquad a(l)\in V(I_{0}/I)_{\lambda'}. 
\end{equation*}
This is the classical formulation of Siegel operator via Fourier expansion. 
  
In the scalar-valued case, when ${\G}$ is neat, 
${\LL}$ extends to an ample line bundle on ${\AGast}$ by the Baily-Borel theory (again denoted by ${\LL}$), 
and we have ${\LL}|_{{\AI}}\simeq {\LL}_{I}$ naturally. 
Then, for a scalar-valued modular form $f$, 
we have $\Phi_{I}f=f|_{{\AI}}$ 
as sections of ${\LL}_{I}^{\otimes k}={\LL}^{\otimes k}|_{{\AI}}$.

\subsection{Meromorphic Siegel operator}\label{ssec: Siegel meromorphic}

We want to extend the formalism of Siegel operator to some \textit{meromorphic} modular forms. 
The definition of the vanishing order $\mu_{I}(f)$ and its expression \eqref{eqn: mu_I(f)} 
are valid even when $f$ is meromorphic. 
Clearly it is necessary that $\mu_{I}(f)\geq 0$ for defining the Siegel operator for $f$, 
but it seems that this is not sufficient (see Remark \ref{remark: Jacobi form}).  
We impose an additional condition as follows. 

Let $\lambda$ be a highest weight for ${\rm GL}(g, {\C})$ (not necessarily $\lambda\geq 0$), 
and $f$ a meromorphic modular form of weight $\lambda$. 
We denote by $P(f)$ the (reduced) image of the pole divisor of $f$ by ${\D}\to {\AG}$. 
This is an algebraic divisor of ${\AG}$. 

\begin{lemma}\label{lem: regularity at cusp}
The following conditions are equivalent. 

(1) The closure of $P(f)$ in ${\AGast}$ does not contain the $I$-cusp ${\AI}$. 

(2) We can write $f=f_1/f_0$ as in Lemma \ref{lem: mero SMF} with $\mu_{I}(f_{0})=0$. 
\end{lemma}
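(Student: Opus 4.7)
The plan is to exploit the dictionary between the Baily-Borel extension of scalar-valued modular forms to $\AGast$ (which relies on ampleness of $\LL$) and the vanishing order $\mu_{I}$ at the $I$-cusp. For (2)~$\Rightarrow$~(1) I would use the denominator $f_{0}$ to bound the pole locus of $f$, and the condition $\mu_{I}(f_{0})=0$ to prevent the closure of this pole locus from touching $\AI$. For (1)~$\Rightarrow$~(2) I would start from an arbitrary decomposition given by Lemma~\ref{lem: mero SMF} and replace its denominator by a better one produced via an ampleness/Serre vanishing argument on $\AGast$.

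For (2)~$\Rightarrow$~(1): given $f=f_{1}/f_{0}$ with $f_{0}$ scalar-valued of weight $k$ and $\mu_{I}(f_{0})=0$, the holomorphicity of $f_{0}f=f_{1}$ on $\AG$ forces $P(f)$ to be contained in the zero locus of $f_{0}$. By Baily-Borel $f_{0}$ extends to a section of the ample line bundle $\LL^{\otimes k}$ on $\AGast$, and under the identification $\LL^{\otimes k}|_{\AI}\simeq \mathcal{L}_{I}^{\otimes k}$ of Example~\ref{example: Phi Sym det} its restriction to $\AI$ equals $\Phi_{I}f_{0}$. The hypothesis $\mu_{I}(f_{0})=0$, together with \eqref{eqn: mu_I(f)} and the Fourier description of $\Phi_{I}$, forces $\Phi_{I}f_{0}\not\equiv 0$, so $f_{0}$ does not vanish identically on $\AI$. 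Hence the zero divisor of the extended $f_{0}$ does not contain $\AI$ in $\AGast$; since $\overline{P(f)}$ is contained in this zero divisor, it does not either.

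For (1)~$\Rightarrow$~(2): let $D\subset \AGast$ be the closure of the pole divisor of $f$ on $\AG$, taken as a Weil divisor with multiplicities equal to the pole orders of $f$ along each component; its support is $\overline{P(f)}$, which by hypothesis does not contain $\AI$. I would produce a scalar-valued holomorphic modular form $f_{0}$ of some weight $N$ with $\mathrm{div}(f_{0})\geq D$ and $f_{0}|_{\AI}\not\equiv 0$; then $f_{1}:=f_{0}f$ is holomorphic on $\AG$ and, by Koecher ($g\geq 2$), a holomorphic Siegel modular form of weight $\lambda\otimes\det^{N}$, giving a decomposition as in Lemma~\ref{lem: mero SMF} with $\mu_{I}(f_{0})=0$. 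Since $\AGast$ is normal projective and $\LL$ is ample, the reflexive coherent sheaf $\LL^{\otimes N}(-D)$ of sections of $\LL^{\otimes N}$ vanishing along $D$ to the prescribed orders satisfies, for $N\gg 0$, Serre vanishing for the ideal sheaf sequence of the closure $\overline{\AI}\subset \AGast$; consequently the restriction map $H^{0}(\AGast,\LL^{\otimes N}(-D))\to H^{0}(\overline{\AI},\LL^{\otimes N}(-D)|_{\overline{\AI}})$ is surjective. The closure $\overline{\AI}$ is itself the Baily-Borel compactification of a Siegel modular variety, $\LL|_{\overline{\AI}}\simeq \mathcal{L}_{I}$ is ample on it, and the restriction of $\LL^{\otimes N}(-D)$ to $\overline{\AI}$ is a nontrivial coherent sheaf because $\AI\not\subset \mathrm{supp}(D)$; hence its $H^{0}$ is nonzero for $N\gg 0$, and any $f_{0}$ lifting a nonzero section meets the requirements.

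The hard part will be executing this Serre vanishing and restriction argument rigorously in the presence of the singularities of $\AGast$: the boundary has codimension $\geq g\geq 2$, so $\AI$ typically lies entirely in the singular locus, and one must verify that the reflexive sheaf $\LL^{\otimes N}(-D)$ and its restriction to $\overline{\AI}$ behave as expected. A technically cleaner alternative is to work on a smooth toroidal compactification in which $\overline{\AI}$ is dominated by a smooth subvariety, perform the analogous ampleness construction there (where Serre-Kodaira vanishing is standard), and verify compatibility with $\mu_{I}$; that compatibility is essentially built into the canonical extension recalled in \S\ref{ssec: Siegel auto VB}.
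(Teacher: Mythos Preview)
Your argument for (2)~$\Rightarrow$~(1) is essentially identical to the paper's. For (1)~$\Rightarrow$~(2) your strategy is correct but more elaborate than necessary. The paper does not attempt to control the whole restriction map to $\overline{\AI}$; instead it picks a single point $p\in \AI\setminus \overline{P(f)}$, takes the ideal sheaf $\mathcal{I}$ of $\overline{P(f)}$, and uses only the global generation of $\mathcal{I}^{\ell}\otimes \LL^{\otimes k}$ for $k\gg 0$ (Serre's theorem, valid on any projective scheme) to produce a section $f_{0}$ with $f_{0}(p)\ne 0$. This immediately gives $\mu_{I}(f_{0})=0$ and, for $\ell$ large, the required cancellation of poles. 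Your Serre vanishing/surjectivity route would also work---coherent sheaf cohomology on projective schemes does not care about singularities, so your worries about the singular locus of $\AGast$ are largely unfounded---but the paper's pointwise global generation trick is shorter and sidesteps the issue entirely. One small omission: the paper also explicitly reduces from general $\Gamma$ to neat $\Gamma$ by passing to a neat normal subgroup and taking the average product of $f_{0}$ over $\Gamma/\Gamma'$; you should include this step, since the Baily-Borel extension of $\LL$ is typically stated for neat groups.
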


\begin{proof}
(2) $\Rightarrow$ (1): 
If $f=f_1/f_0$, then $P(f)$ is contained in the zero divisor of the holomorphic scalar-valued modular form $f_0$. 
By the theory of holomorphic Siegel operator, 
the intersection of $\overline{{\rm div}(f_0)}$ with ${\AI}$ is the zero divisor of $\Phi_{I}f_{0}\not\equiv 0$. 
Here $\Phi_{I}f_{0}\not\equiv 0$ holds because $\mu_{I}(f_0)=0$. 
Therefore $\overline{{\rm div}(f_0)}$ does not contain ${\AI}$. 
 
(1) $\Rightarrow$ (2): 
We take a point $p\in {\AI}$ not contained in $\overline{P(f)}$. 
We first consider the case ${\G}$ neat. 
Let $\mathcal{I}$ be the ideal sheaf of $\overline{P(f)}$ on ${\AGast}$. 
By the ampleness of ${\LL}$ over ${\AGast}$, for each $\ell>0$, 
the sheaf $\mathcal{I}^{\ell} \otimes {\LL}^{\otimes k}$ is globally generated for $k\gg 0$. 
This implies that we have a section $f_0$ of $\mathcal{I}^{\ell}\otimes {\LL}^{\otimes k}$ over ${\AGast}$ 
such that $f_{0}(p)\ne 0$. 
As a section of ${\LL}^{\otimes k}$, $f_0$ vanishes at $P(f)$ with order $\geq \ell$, 
while it does not vanish identically at ${\AI}$ and so $\mu_{I}(f_0)=0$. 
If we take $\ell$ sufficiently large, the zero of $f_0$ cancels the pole of $f$, 
so $f_1:= f_0\otimes f$ is holomorphic over ${\D}$. 

For general ${\G}$, we take a neat normal subgroup ${\G}'\lhd {\G}$ of finite index. 
Then we do the same argument over $\mathcal{A}_{{\G}'}^{\ast}$, 
where now $f_0$ is first taken to be nonvanishing at every point in the inverse image of $p$ and then 
corrected to be ${\G}/{\G}'$-invariant by taking the average product.  
\end{proof}

\begin{definition}\label{def: regularity at cusp}
When the meromorphic modular form $f$ satisfies the conditions in Lemma \ref{lem: regularity at cusp}, 
we say that $f$ is \textit{regular} at the $I$-cusp. 
\end{definition}

This condition is stronger than $\mu_{I}(f)\geq 0$. 
The latter is equivalent to $\mu_{I}(f_1)\geq \mu_{I}(f_0)$ 
and just means that $f$ as a meromorphic section over $\overline{{\D}/U(I)_{{\Z}}}$ 
is holomorphic at a general point of the boundary divisor ${\VI}$. 

\begin{example}\label{ex: non-regular at cusp}
Let $f_0, f_1$ be scalar-valued holomorphic modular forms with $\mu_{I}(f_0)=\mu_{I}(f_1)>0$. 
If ${\rm div}(f_0)$ has a component not contained in ${\rm div}(f_1)$ but containing ${\AI}$ in its closure, 
then $f=f_1/f_0$ satisfies $\mu_I(f)=0$ but is not regular at $I$ in the sense of Definition \ref{def: regularity at cusp}. 
\end{example}

The condition in Definition \ref{def: regularity at cusp} enables us to define the Siegel operator 
in the style of \S \ref{ssec: Siegel auto VB}: 

\begin{proposition}\label{prop: Siegel meromorphic}
Let $f=\sum_{l}a(l)q^l$ be a meromorphic modular form of weight $\lambda=(\lambda_1, \cdots, \lambda_{g})$ 
which is regular at the $I$-cusp. 
Then the restriction $f|_{{\VI}}$ to ${\VI}$ takes values in ${\E}_{\lambda}^{I}$ 
and is the pullback of a meromorphic modular form $\Phi_{I}f$ 
of weight $\lambda'=(\lambda_1, \cdots, \lambda_{g-1})$ on ${\D}_{I}$. 
If $l\in U(I)^{\perp}$ and $a(l)\ne 0$, then 
$l\in {\Sym}^2(I_0/I)^{\vee}$ and $a(l)\in V(I_0)_{\lambda}^{I}$. 
\end{proposition}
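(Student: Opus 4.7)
The plan is to reduce the meromorphic statement to the holomorphic Siegel operator reviewed in \S\ref{ssec: Siegel auto VB} by exploiting the presentation afforded by the regularity at the $I$-cusp. By Lemma \ref{lem: regularity at cusp} (2) we may write $f=f_{1}/f_{0}$ where $f_{1}$ is a holomorphic Siegel modular form of weight $\lambda^{+}=\lambda\otimes \det^{k}$ and $f_{0}$ is a scalar-valued holomorphic Siegel modular form of weight $k$ with $\mu_{I}(f_{0})=0$. The holomorphic Siegel operator of \S\ref{ssec: Siegel auto VB} produces $\Phi_{I}f_{1}$, a holomorphic modular form of weight $(\lambda^{+})'$ on ${\D}_{I}$, and $\Phi_{I}f_{0}$, a scalar-valued holomorphic modular form of weight $k$ on ${\D}_{I}$. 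Because $\mu_{I}(f_{0})=0$, the Fourier-expansion formula at the end of \S\ref{ssec: Siegel auto VB} shows that $\Phi_{I}f_{0}\not\equiv 0$.

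I then define
\begin{equation*}
\Phi_{I}f \; := \; \frac{\Phi_{I}f_{1}}{\Phi_{I}f_{0}},
\end{equation*}
which is a meromorphic modular form on ${\D}_{I}$ of weight $(\lambda^{+})'\otimes \det^{-k}=\lambda'$. Well-definedness requires checking independence of the decomposition $f=f_{1}/f_{0}$: if $f=\tilde{f}_{1}/\tilde{f}_{0}$ is another such presentation with $\mu_{I}(\tilde f_0)=0$, then $f_{1}\tilde{f}_{0}=\tilde{f}_{1}f_{0}$ is a holomorphic identity to which the multiplicativity of the holomorphic Siegel operator applies, yielding $\Phi_{I}f_{1}\cdot \Phi_{I}\tilde{f}_{0}=\Phi_{I}\tilde{f}_{1}\cdot \Phi_{I}f_{0}$, hence equality of the two candidate quotients.

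Next I verify the geometric statement that $f|_{{\VI}}$ is the pullback of $\Phi_{I}f$ along $\phi$. By \cite{Ma2} Proposition 3.6 applied to $f_{1}$ and $f_{0}$, the restrictions $f_{1}|_{{\VI}}$ and $f_{0}|_{{\VI}}$ take values in $\mathcal{E}_{\lambda^{+}}^{I}|_{{\VI}}=\phi^{\ast}(\mathcal{E}_{I})_{(\lambda^{+})'}$ and $\phi^{\ast}{\LL}_{I}^{\otimes k}$ respectively, and are the pullbacks of $\Phi_{I}f_{1}$ and $\Phi_{I}f_{0}$. Since $\Phi_{I}f_{0}$ is not identically zero and the restriction $f_{0}|_{{\VI}}$ does not vanish identically along ${\VI}$ (again by $\mu_{I}(f_{0})=0$), the quotient $f|_{{\VI}}=f_{1}|_{{\VI}}/f_{0}|_{{\VI}}$ is a well-defined meromorphic section of $\mathcal{E}_{\lambda^{+}}^{I}\otimes {\LL}^{\otimes -k}|_{{\VI}}=\mathcal{E}_{\lambda}^{I}|_{{\VI}}=\phi^{\ast}(\mathcal{E}_{I})_{\lambda'}$, and equals $\phi^{\ast}(\Phi_{I}f)$.

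The Fourier assertion then follows by combining this identification with the Fourier expansion of $\Phi_{I}f$ on ${\D}_{I}$. Write $\Phi_{I}f=\sum_{l}a'(l)q^{l}$ on ${\D}_{I}$; by Lemma \ref{lem: mero SMF} applied to $\Phi_{I}f$ and the analysis of Fourier support at the end of \S\ref{ssec: SMF}, the exponents $l$ lie in $U(I_{0}/I)_{{\Z}}^{\vee}\subset {\Sym}^{2}(I_{0}/I)^{\vee}$ and the coefficients $a'(l)$ lie in $V(I_{0}/I)_{\lambda'}$. Pulling back via $\phi$ corresponds to viewing ${\Sym}^{2}(I_{0}/I)^{\vee}\subset {\Sym}^{2}I_{0}^{\vee}$ and $V(I_{0}/I)_{\lambda'}\simeq V(I_{0})_{\lambda}^{I}$ via \eqref{eqn: U-inv decomp}. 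Comparing with $f|_{{\VI}}=\sum_{(v_{I},l)=0}a(l)q^{l}$, which is the $U(I)^{\perp}$-part of the Fourier expansion of $f$, one reads off that whenever $l\in U(I)^{\perp}$ and $a(l)\ne 0$ one has $l\in {\Sym}^{2}(I_{0}/I)^{\vee}$ and $a(l)\in V(I_{0})_{\lambda}^{I}$.

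The main technical point I expect to require care is the well-definedness of $\Phi_{I}f$ and the transition from the quotient $f_{1}/f_{0}$ of Fourier series (whose individual coefficients need not be supported in ${\Sym}^{2}(I_{0}/I)^{\vee}$ even after restriction) to the clean statement on the $U(I)^{\perp}$-part; the regularity hypothesis $\mu_{I}(f_{0})=0$, together with the fact that $\Phi_{I}f_{0}\not\equiv 0$, is exactly what is needed to sidestep the pathology of Example \ref{ex: non-regular at cusp} and make the meromorphic Siegel operator behave compatibly with the holomorphic one on numerator and denominator.
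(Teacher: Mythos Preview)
Your proposal is correct and follows essentially the same route as the paper: write $f=f_{1}/f_{0}$ with $\mu_{I}(f_{0})=0$ via Lemma~\ref{lem: regularity at cusp}(2), observe that $f_{0}|_{{\VI}}\not\equiv 0$ so that $f|_{{\VI}}=\phi^{\ast}(\Phi_{I}f_{1}/\Phi_{I}f_{0})$ takes values in ${\E}_{\lambda}^{I}$, and define $\Phi_{I}f$ as this quotient. Your explicit well-definedness check is a harmless addition; the paper bypasses it by noting that the equation $f|_{{\VI}}=\phi^{\ast}\Phi_{I}f$ already characterizes $\Phi_{I}f$ intrinsically.
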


\begin{proof}
We write $f=f_1/f_0$ with $\mu_I(f_{0})=0$ as in Lemma \ref{lem: regularity at cusp} (2). 
Then we have $f_{0}|_{{\VI}}\not\equiv 0$, and hence 
\begin{equation*}
f|_{{\VI}} = (f_{1}|_{{\VI}})/(f_{0}|_{{\VI}}) = \phi^{\ast}\Phi_{I}f_1/\phi^{\ast}\Phi_{I}f_0 
= \phi^{\ast}(\Phi_{I}f_{1}/\Phi_{I}f_{0}). 
\end{equation*}
The first expression shows that 
$f|_{{\VI}}$ takes values in ${\E}_{\lambda}^{I}={\E}_{\lambda^{+}}^{I}\otimes {\LL}^{\otimes -k}$. 
The last expression shows that 
it suffices to set $\Phi_{I}f = \Phi_{I}f_{1}/\Phi_{I}f_{0}$. 
The last assertion follows from the corresponding property of $f_{1}|_{{\VI}}$ and $(f_{0}|_{{\VI}})^{-1}$. 
\end{proof}

We call the operation $f\mapsto \Phi_{I}f$ the \textit{Siegel operator} for the meromorphic modular form $f$. 
Note that if $\mu_{I}(f)=\mu_{I}(f_1)>0$, then $\Phi_{I}f \equiv 0$. 

As in the holomorphic case, the Fourier expansion of $\Phi_{I}f$ is given by 
\begin{equation*}
\Phi_{I}f = \sum_{l\in {\Sym}^2(I_0/I)^{\vee}}a(l)q^l, 
\end{equation*} 
where now the index vectors $l$ run over the intersection of $U(I_0)_{{\Z}}^{\vee}$ 
with a translation of the cone $\mathcal{C}(I_{0}/I)$. 

\begin{remark}\label{remark: Jacobi form}
When $\mu_{I}(f)=0$ but $f$ is not regular at $I$, 
then $f|_{\mathcal{V}_{I}}$ is just a meromorphic Jacobi form of index $0$. 
This no longer descends to ${\D}_{I}$: 
the index vectors in the Fourier expansion are no longer confined to the subspace ${\Sym}^2(I_0/I)^{\vee}$ of $U(I)^{\perp}$. 
\end{remark}

\section{K-theory elevator for rank $1$ degeneration}\label{sec: elevator}

In this section we present the full version of Theorem \ref{thm: degeneration intro} (Theorem \ref{thm: elevator Siegel}). 
In \S \ref{ssec: rank 1} we explain the rank $1$ degeneration of the universal abelian variety. 
In \S \ref{ssec: elevator} we recall the K-theory elevator in the present setting. 
After these preliminaries, we state our result in \S \ref{ssec: main state}. 
Its proof will be given in \S \ref{sec: proof}. 

We keep the setting and notation of \S \ref{sec: Siegel operator}. 
We assume that the image of ${\G}(I)\to {\rm Sp}(I^{\perp}/I)$ does not contain $-1$ 
so that we have the universal abelian variety (rather than Kummer variety) over ${\AI}$ 
with the symplectic lattice $I^{\perp}\cap \Lambda / I\cap \Lambda$. 
This condition also guarantees that the projection 
$\overline{{\D}/U(I)_{\Z}}\to {\AGtor}$ 
is unramified at a general point of the boundary divisor $\mathcal{V}_{I}$ 
(i.e., $I$ is a so-called regular cusp). 
This is satisfied whenever ${\G}$ is neat. 


\subsection{Rank $1$ degeneration}\label{ssec: rank 1}

Several methods are known for extending the universal family ${\XG}\to {\AG}$ 
to a proper family ${\XGtor}\to {\AGtor}$ over the partial toroidal compactification ${\AGtor}$ 
(and furthermore over full toroidal compactifications). 
The first is the method of toroidal compactification with mixed fans, 
which is a generalization of the construction in \cite{AMRT} \S I.4. 
This was worked out in \cite{Na} \cite{HW} for some examples. 
The second method uses the so-called Mumford's construction \cite{Mu}, 
and is worked out in \cite{HKW} \S II.4 in the case $g=2$. 
It seems that over a corank $1$ cusp, both constructions are canonical and produce the same family.  
Anyway, some common properties of ${\XGtor}\to {\AGtor}$ can be summarized as follows. 

For our purpose, it does not matter even if we remove a codimension $>1$ locus from ${\AGtor}$ as necessary. 
We denote by $\pi\colon \mathcal{X}'\to \mathcal{A}'$ the resulting family. 
\begin{enumerate}
\item $\pi\colon \mathcal{X}'\to \mathcal{A}'$ is a projective morphism, 
and $\mathcal{A}'$ is a Zariski open set of ${\AGtor}$ which intersects with the boundary divisor $\Delta_{I}$. 
\item The total space $\mathcal{X}'$ is smooth, and its boundary $\pi^{-1}(\Delta_{I})$ 
is a divisor with normal crossings. 
\item The singular fiber $A=\pi^{-1}(p)$ over a point $p$ of $\Delta_{I}\cap \mathcal{A}'$ is a union 
\begin{equation*}
A = {\proj}_{(1)} \cup {\proj}_{(2)} \cup \cdots \cup {\proj}_{(\alpha)} 
\end{equation*}
of copies of a ${\proj}^1$-bundle ${\proj}(\mathcal{O}_{B}\oplus L)\to B$, 
where $B$ is the $(g-1)$-dimensional abelian variety corresponding to the image of $p$ in ${\AI}$ 
and $L$ is a line bundle on $B$. 
Two adjacent ${\proj}_{(i)}$ and ${\proj}_{(i+1)}$ are 
glued by identifying the $\infty$-section of ${\proj}_{(i)}$ and the $0$-section of ${\proj}_{(i+1)}$. 
We also identify the $\infty$-section of ${\proj}_{(\alpha)}$ and the $0$-section of ${\proj}_{(1)}$ 
with the shift by a point $b\in B$. 
\end{enumerate}
The line bundle $L$ and the shift parameter $b$ are related, 
and this data is parametrized by the fiber of $\Delta_{I} \to {\AI}$. 
In known examples, we have $L\simeq \mathcal{O}_{B}$ if $b=0$. 

The number $\alpha$ of components of $A$ is explicitly determined by $I\cap \Lambda$ and $U(I)_{{\Z}}$. 
For a cycle-theoretic technical reason (cf.~Remark \ref{remark: elevator with several components}), 
we need to add the following condition: 
\begin{enumerate}
\item[(4)] The singular fibers $A$ are irreducible, i.e., $\alpha =1$ in the above notation. 
Thus $A$ is a self-gluing of ${\proj}(\mathcal{O}_{B}\oplus L)$ 
which identifies the $0$-section and the $\infty$-section after a translation. 
\end{enumerate}
See \cite{HW} (and also \cite{HKW} Proposition II.4.5) for examples of such a cusp 
where ${\G}$ is the modular group of $(1, p)$-polarized abelian surfaces with canonical level structure.   

\begin{definition}
We define ${\AItilde}\subset \Delta_{I}$ to be the locus where $L\simeq \mathcal{O}_{B}$ and $b=0$, 
namely the singular fiber $A$ is the self-gluing of $B\times {\proj}^1$ 
which identifies the $0$-section and the $\infty$-section with trivial shift. 
More simply, $A=B\times Q_0$ where $Q_0$ is the irreducible rational curve with one node. 
We call ${\AItilde}$ the \textit{product locus}. 
\end{definition}

In known examples, since $b=0$ implies $L\simeq \mathcal{O}_{B}$, the following holds: 
\begin{enumerate}
\item[(5)] the projection $\phi_{0}\colon {\AItilde}\to {\AI}$ is dominant and has finite fibers. 
\end{enumerate}
We will come back to (1) -- (5) from a general point of view on another occasion. 
For the moment, these are taken as assumptions in the rest of this paper, 
which are satisfied for the examples in \cite{HW}. 
 
Shrinking ${\AItilde}$ if necessary, 
we may assume that the restriction of $\mathcal{X}'\to \mathcal{A}'$ over ${\AItilde}$ 
is isomorphic to $\mathcal{X}_{I}\times Q_0 \to {\AItilde}$ 
where $\mathcal{X}_{I}\to {\AItilde}$ is the pullback of the universal abelian variety over ${\AI}$. 
We fix one such isomorphism. 

\subsection{K-theory elevator}\label{ssec: elevator}

Let $f\colon S'\to U' \subset \mathcal{A}'$ be an etale cover of degree $d$ 
of a Zariski open set $U'$ of $\mathcal{A}'$ which intersects with the product locus ${\AItilde}$. 
Shrinking $U'$ and ${\AItilde}$ if necessary, 
we may assume that ${\AItilde}$ is a closed subset of $U'$. 
We write $S_{I}=f^{-1}({\AItilde})$. 
We denote by $X'\to S'$ the base change of $\mathcal{X}'\to \mathcal{A}'$ by $S'\to U'$. 
Then $X'|_{S_{I}}\simeq X_{I}\times Q_{0}$ where $X_{I}\to S_{I}$ is the base change of 
$\mathcal{X}_{I}\to {\AItilde}$. 
We write $\Delta_{S}\subset S'$ for the inverse image of $\Delta_{U}=U'\cap \Delta_{I}$. 
We also write $U=U'\cap {\AG}$ and 
denote by $X\to S$ the restriction of $X'\to S'$ over $U$. 
The situation is summarized by the commutative diagram 
\begin{equation*}
\xymatrix@C+1pc{
X_{I}\times Q_0 \ar@{^{(}->}[r] \ar[d] & X|_{\Delta_{S}}  \ar@{^{(}->}[r] \ar[d] & X' \ar[d]_{\pi} & X  \ar@{_{(}->}[l] \ar[d] \\ 
S_{I} \ar@{^{(}->}[r] \ar[d] & \Delta_{S} \ar@{^{(}->}[r] \ar[d] & S' \ar[d]_{f} & S \ar@{_{(}->}[l] \ar[d] \\ 
{\AItilde} \ar@{^{(}->}[r] \ar[rd]_{\phi_{0}} & \Delta_{U}  \ar[d]_{\phi} \ar@{^{(}->}[r] & U' & \ar@{_{(}->}[l] U  \\ 
& {\AI} & & 
}
\end{equation*}
Here all squares are base change, 
the middle vertical maps are etale coverings of degree $d$, 
and the upper vertical maps are a family of abelian varieties and its rank $1$ degeneration. 

Now suppose that we have a family $Z'$ of higher Chow cycles of type $(2, 3-g)$ on $X'\to S'$. 
We write $Z$ for the restriction of $Z'$ over $X\to S$. 
On the other hand, if we restrict $Z'$ over $S_{I}$, 
it is a codimension $2$ cycle on $(X'|_{S_{I}})\times {\cube}^{3-g} = X_{I}\times Q_0 \times {\cube}^{3-g}$. 
We pull it back by 
\begin{equation*}
X_{I}\times {\cube}^{4-g} \hookrightarrow X_{I}\times {\proj}^1\times {\cube}^{3-g} \to X_{I}\times Q_0\times {\cube}^{3-g}, 
\end{equation*}
where ${\proj}^1\to Q_{0}$ is the normalization map identifying $0$ and $\infty$. 
By construction, the resulting cycle $Z_{I}$ on $X_{I}\times {\cube}^{4-g}$ satisfies the cocycle construction. 
Hence it gives a family of higher Chow cycles of type $(2, 4-g)$ on $X_{I}\to S_{I}$. 

This type of operation $Z\rightsquigarrow Z_I$ is known as the \textit{K-theory elevator}. 
The first explicit example was discovered by Collino \cite{Co}; 
later a systematic treatment was given in \cite{elevator} \S 2 where this term was coined. 
In what follows, when $g=3$, we assume as before that 
$Z$ is a family of nullhomologous cycles.

\subsection{The statement}\label{ssec: main state}

Recall from Theorem \ref{thm: main construction} that 
the cycle family $Z$ gives a meromorphic Siegel modular form $f_{Z}^{(i)}$ 
of weight ${\Sym}^{4i}\otimes {\rm det^{-i}}$ on ${\AG}$ for each $1\leq i\leq d$. 
In Lemma \ref{lem: holomorphic at boundary}, 
we will prove that $f_{Z}^{(i)}$ is holomorphic at a general point of $\Delta_{I}$. 
However, as explained in \S \ref{ssec: Siegel meromorphic}, 
this is still weaker than the regularity at $I$ in the sense of Definition \ref{def: regularity at cusp}. 
Let us assume that the divisor components of ${\AG}-U$ do not contain ${\AI}$ in their closure. 
Then, by Lemma \ref{lem: regularity at cusp} (1), $f_{Z}^{(i)}$ is regular at $I$. 
This enables us to apply the meromorphic Siegel operator to $f_{Z}^{(i)}$. 
By Proposition \ref{prop: Siegel meromorphic}, 
the resulting $\Phi_{I}f_{Z}^{(i)}$ is a meromorphic Siegel modular form over ${\AI}$, 
again of weight ${\Sym}^{4i}\otimes {\rm det}^{-i}$. 

On the other hand, if consider the cycle family $Z_{I}$ on $X_{I}\to S_{I}$ and 
take the same product construction as 
$\delta\nu_{Z}^{+} \rightsquigarrow f_{Z}^{(i)}$ 
for the etale cover $S_{I}\to {\AItilde}$ of degree $d$ 
(rather than $S_{I}\to {\AI}$ which in general has larger degree), 
we obtain a meromorphic section $f_{Z_{I}}^{(i)}$ 
of $\phi_{0}^{\ast}({\Sym}^{4i}{\E}_{I}\otimes {\LL}_{I}^{\otimes -i})$ over ${\AItilde}$, 
where ${\E}_{I}$ and ${\LL}_{I}$ are the Hodge bundles on ${\AI}$. 

We can now state our main result. 

\begin{theorem}\label{thm: elevator Siegel}
The meromorphic section $f_{Z_{I}}^{(i)}$ over ${\AItilde}$ 
is the pullback of a meromorphic Siegel modular form of weight ${\Sym}^{4i}\otimes {\rm det}^{-i}$ 
over ${\AI}$ (again denoted by $f_{Z_{I}}^{(i)}$), 
and we have 
\begin{equation*}
\Phi_{I}f_{Z}^{(i)} = f_{Z_{I}}^{(i)} 
\end{equation*}
up to constant. 
\end{theorem}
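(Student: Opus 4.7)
The plan is to treat the partial toroidal compactification ${\AGtor}$ as the common geometric ground on which both the meromorphic Siegel operator and the K-theory elevator act, and then to match them component by component. First I would establish that $f_{Z}^{(i)}$ is regular at the $I$-cusp in the sense of Definition \ref{def: regularity at cusp}, so that $\Phi_{I}f_{Z}^{(i)}$ is defined via Proposition \ref{prop: Siegel meromorphic}. Since $Z'$ is a cycle family on $X'\to S'$, the normal function $\nu_{Z'}$ is admissible along $\Delta_{S}$, so $\delta\nu_{Z'}^{+}$ extends holomorphically across $\Delta_{S}$ as a section of $H^{1}\mathcal{K}_{g,S'}(\log \Delta_{S})$. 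Pushing down by the properness of $f\colon S'\to U'$, and combining with the hypothesis that no divisor component of ${\AG}-U$ meets ${\AI}$ in its closure, yields the required regularity at $I$.

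Next I would translate $\Phi_{I}f_{Z}^{(i)}$ into Koszul language. Via the canonical extension and $I_{0}$-trivialization, $f_{Z}^{(i)}|_{\VI}$ is forced to take values in the sub bundle $(\Sym^{4i}{\E}\otimes {\LL}^{\otimes -i})^{I}$, and $\Phi_{I}f_{Z}^{(i)}$ is the descent of this restriction along $\phi\colon \VI\to \D_{I}$. By Example \ref{example: Phi Sym det}, this sub bundle descends to $\Sym^{4i}{\E}_{I}\otimes {\LL}_{I}^{\otimes -i}$, matching via Lemma \ref{lem: H1Kg} (applied to both $\AG$ and $\AI$) exactly what is expected on the $\AI$-side. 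In particular, $\Phi_{I}f_{Z}^{(i)}$ is computed by taking the $U(I,I_{0})$-invariant part of the boundary value of $\delta\nu_{Z'}^{+}$ and then running the symmetric-product construction over the etale cover $S_{I}\to {\AItilde}$ of degree $d$.

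The core step is then to identify this $U(I,I_{0})$-invariant boundary value, restricted to $S_{I}$, with $\delta\nu_{Z_{I}}^{+}$ up to a universal constant. For this I would invoke the limit formula of \cite{elevator} for the normal function under rank $1$ degeneration: at a point of the product locus, the special fiber of $\pi$ is $B\times Q_{0}$, and the normalization $B\times \proj^{1}\to B\times Q_{0}$ is precisely the map used to elevate $Z'|_{S_{I}}$ to $Z_{I}$. The formula expresses the limit of $\nu_{Z'}$, read on the Kunneth summand $H^{\bullet}(B)\otimes H^{0}(\proj^{1})$ of the limit mixed Hodge structure, in terms of $\nu_{Z_{I}}$. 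The key point is that, under the $I_{0}$-trivialization, this Kunneth summand corresponds precisely to the $U(I,I_{0})$-invariant subspace $V(I_{0})_{\lambda}^{I}\simeq V(I_{0}/I)_{\lambda'}\otimes I^{\vee}_{\C}$ of \eqref{eqn: U-inv decomp}; this is the matching highlighted in the introduction between the reduction of automorphic vector bundles effected by the Siegel operator and the Kunneth-type hypothesis on differentials in \cite{elevator}. Applying the Gauss-Manin connection to both sides of the elevator formula and passing to the primitive Koszul cohomology then yields an equality $\delta\nu_{Z'}^{+}|_{S_{I}}^{I}=c\,\delta\nu_{Z_{I}}^{+}$ for some universal nonzero constant $c$.

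Finally, since the operation $\delta\nu^{+}\mapsto f^{(i)}$ is the projection to $\Sym^{4i}$ of the $i$-th elementary symmetric polynomial along the fibers, and since $S_{I}\to {\AItilde}$ has the same degree $d$ with fibers realized as the restrictions of the fibers of $f$, this construction intertwines with $\Phi_{I}$ and multiplies the constant by $c^{i}$, giving $\Phi_{I}f_{Z}^{(i)}=c^{i}f_{Z_{I}}^{(i)}$. The main obstacle will be the core step: rigorously matching the Kunneth component that carries the elevator limit formula with the $U(I,I_{0})$-invariant summand picked out by the Siegel operator. This requires careful bookkeeping of the weight filtration on the limit mixed Hodge structure at the $I$-cusp together with the Hodge-theoretic canonical extension, and a verification that the proper-intersection and Kunneth hypotheses of \cite{elevator} survive the reduction $V(I_{0})_{\lambda}\rightsquigarrow V(I_{0})_{\lambda}^{I}$ inherent to the Siegel operator.
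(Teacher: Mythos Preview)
Your overall strategy matches the paper's: use the partial toroidal compactification as common ground, establish regularity of $f_{Z}^{(i)}$ at the $I$-cusp via admissibility, invoke the limit formula of \cite{elevator}, and compare the K\"unneth reduction with the Siegel-operator reduction of automorphic vector bundles. The final symmetric-polynomial step is also the same.

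There is, however, a genuine gap. In your second paragraph you assert that $\Phi_{I}f_{Z}^{(i)}$ is obtained by first taking the $U(I,I_{0})$-invariant part of the boundary value of $\delta\nu_{Z'}^{+}$ and \emph{then} forming the symmetric product along $S_{I}\to{\AItilde}$. These two operations do not commute: $(\Sym^{4}{\E})^{I}$ is a sub bundle, not a quotient, and the invariant part of a product of quartics is not the product of their invariant parts unless each factor is already invariant. The limit formula of \cite{elevator} only tests $\nu_{Z}$ against forms converging to $\eta_{0}\wedge dz/z$, hence only sees the component of $\delta\nu_{Z}^{+}|_{S_{I}}$ lying in $f^{\ast}({\Sym}^{4}{\E}^{I}\otimes{\LL}^{-1})$; it gives no control over the other weight components for $\mathrm{GL}(I)\simeq{\C}^{\ast}$. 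So from the limit formula alone you get only the equality of invariant projections, which is not enough to conclude $\Phi_{I}f_{Z}^{(i)}=f_{Z_{I}}^{(i)}$.

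The paper closes this gap by a separate lemma (Lemma~\ref{lem: cusp reduction cover}): since $f_{Z}^{(d)}$ is regular at $I$, its boundary value lies in $({\Sym}^{4d}{\E})^{I}\otimes{\LL}^{\otimes -d}$ by Proposition~\ref{prop: Siegel meromorphic}; but $f_{Z}^{(d)}$ at a point of $\Delta_{U}$ is the product $P_{1}\cdots P_{d}$ of the $d$ quartics $P_{j}=\delta\nu_{Z}^{+}(s_{j})$, and if this product has degree $0$ in the last variable $x_{g}$ then so does each $P_{j}$. This polynomial factorization argument is what forces $\delta\nu_{Z}^{+}|_{\Delta_{S}}$ itself to land in $f^{\ast}({\Sym}^{4}{\E}^{I}\otimes{\LL}^{-1})$, after which the limit formula suffices. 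You should insert this step between your first and third paragraphs. (A minor point: the K\"unneth summand picked out by the Siegel operator on $\mathcal{H}^{p,q}_{\mathrm{prim}}$ is $H^{p,q-1}_{\mathrm{prim}}(B)\wedge d\bar{z}/\bar{z}$, not $H^{\bullet}(B)\otimes H^{0}(\proj^{1})$; see Lemma~\ref{prop: Siegel Hodge geometric II}.)
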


This means that the construction $Z\rightsquigarrow f_{Z}^{(i)}$ is functorial with respect to rank $1$ degeneration, 
where the K-theory elevator corresponds to the Siegel operator. 
The proof of Theorem \ref{thm: elevator Siegel} will be given in \S \ref{sec: proof}.


\begin{remark}\label{remark: elevator with several components}
The K-theory elevator can also be defined when the degenerate abelian variety $A$ has several components (with trivial shift): 
\begin{equation}\label{eqn: several components chain}
A = B\times {\proj}^1 \cup B\times {\proj}^1 \cup \cdots \cup B\times {\proj}^1. 
\end{equation}
Indeed, let $Z_i$ be the restriction of the given cycle on $A$ 
to the $i$-th component $B\times {\cube} \subset B\times {\proj}^1$. 
We take the sum $Z=\sum_{i}Z_i$ regarded as a cycle on a single $B\times {\cube}$. 
Since $Z_i|_{B\times (\infty)} = Z_{i+1}|_{B\times (0)}$, 
we see that $Z|_{B\times (\infty)} = Z|_{B\times (0)}$, 
namely $Z$ is a higher Chow cycle on $B$. 

In this paper we restrict ourselves to the case $A$ irreducible 
because we need to use the limit formula in \cite{elevator} 
which is available only in this setting. 
It would be desirable to extend the limit formula to the reducible case. 
\end{remark}

\section{Siegel operator for Koszul complex}\label{sec: Siegel Koszul}

This section is preliminaries for the proof of Theorem \ref{thm: elevator Siegel}. 
Recall from \S \ref{ssec: Siegel auto VB} that the Siegel operator ${\El} \rightsquigarrow \Phi_{I}{\El}$ 
is defined for automorphic vector bundles. 
In this section we calculate $\Phi_{I}{\El}$ for the components of our Koszul complex $\mathcal{K}_{g}$, 
and compare $\Phi_{I}\mathcal{K}_{g}$ with $\mathcal{K}_{g-1}$. 

\subsection{Cotangent bundle}\label{ssec: Siegel cotangent}

The bundle $\Omega_{{\D}}^{k}$ corresponds to the ${\rm GL}(g, {\C})$-representation $\wedge^{k}{\Sym}^2$.  
This is in general reducible as an automorphic vector bundle, 
but remains irreducible when $k\leq 2$. 
The Siegel operator in this case is as follows. 

\begin{lemma}\label{lem: Siegel cotangent}
Let $k\leq 2$ with $k<g$. 
Then we have $\Phi_{I}\Omega_{{\D}}^{k}=\Omega^{k}_{{\D}_{I}}$. 
\end{lemma}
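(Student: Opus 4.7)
The plan is to exploit the fact, noted just before the lemma, that $\wedge^k \operatorname{Sym}^2 \operatorname{St}$ is irreducible as a $\operatorname{GL}(g,\mathbb{C})$-representation for $k \leq 2$, so that $\Omega_{\mathcal{D}}^k$ coincides with an automorphic vector bundle $\mathcal{E}_\lambda$ for a single highest weight $\lambda$. Once $\lambda$ is identified, the Siegel operator is computed by the rule $\Phi_I \mathcal{E}_\lambda = (\mathcal{E}_I)_{\lambda'}$ from Example \ref{example: Phi Sym det} and the general formalism of \S \ref{ssec: Siegel auto VB}, with $\lambda' = (\lambda_1, \ldots, \lambda_{g-1})$.

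First I would handle $k = 1$ as a direct application: $\Omega_{\mathcal{D}}^1 \simeq \operatorname{Sym}^2 \mathcal{E} = \mathcal{E}_{(2,0,\ldots,0)}$, and Example \ref{example: Phi Sym det} gives $\Phi_I(\operatorname{Sym}^2 \mathcal{E}) = \operatorname{Sym}^2 \mathcal{E}_I = \Omega_{\mathcal{D}_I}^1$, which is valid as soon as $g - 1 \geq 1$.

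For $k = 2$, the task reduces to pinning down the highest weight of $\wedge^2 \operatorname{Sym}^2 \operatorname{St}$. By the classical plethysm identity $\wedge^2 \operatorname{Sym}^2 = V_{(3,1,0,\ldots,0)}$ (see e.g.\ \cite{FH} Exercise 6.16), so $\Omega_{\mathcal{D}}^2 \simeq \mathcal{E}_\lambda$ with $\lambda = (3,1,0,\ldots,0)$ of length $g$. The Siegel operator then outputs $(\mathcal{E}_I)_{\lambda'}$ with $\lambda' = (3,1,0,\ldots,0)$ of length $g-1$. Since the hypothesis $k = 2 < g$ forces $g - 1 \geq 2$, the plethysm identity applies again in the opposite direction on $\mathcal{D}_I$ to identify $(\mathcal{E}_I)_{\lambda'}$ with $\wedge^2 \operatorname{Sym}^2 \mathcal{E}_I = \Omega_{\mathcal{D}_I}^2$.

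There is no real obstacle here beyond bookkeeping; the mild subtlety is that, for $k = 2$, one must verify that the irreducible decomposition $\wedge^2 \operatorname{Sym}^2 = V_{(3,1,0,\ldots,0)}$ is stable under passing from $\operatorname{GL}(g)$ to $\operatorname{GL}(g-1)$, which is exactly what the assumption $k < g$ secures (otherwise the partition would become inadmissible on the boundary side). No separate degeneration or Hodge-theoretic input is needed, since the isomorphism $\Omega_{\mathcal{D}}^1 \simeq \operatorname{Sym}^2 \mathcal{E}$ and the Siegel operator formula already encode everything.
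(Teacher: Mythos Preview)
Your proof is correct and follows essentially the same approach as the paper: identify the highest weights of $\operatorname{Sym}^2$ and $\wedge^2\operatorname{Sym}^2$ as $(2,0^{g-1})$ and $(3,1,0^{g-2})$ (citing \cite{FH} Exercise 6.16), apply $\Phi_I$ by truncating the last entry, and recognize the resulting weights $(2,0^{g-2})$ and $(3,1,0^{g-3})$ as those of $\operatorname{Sym}^2$ and $\wedge^2\operatorname{Sym}^2$ for $\operatorname{GL}(g-1)$. Your explicit remark that the condition $k<g$ is what keeps the partition $(3,1)$ admissible on the $\operatorname{GL}(g-1)$ side is a useful clarification not spelled out in the paper's version.
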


\begin{proof}
The ${\rm GL}(g, {\C})$-representations ${\Sym}^2$ and $\wedge^{2}{\Sym}^2$ 
are irreducible with highest weights 
$(2, 0^{g-1})$ and $(3, 1, 0^{g-2})$ respectively 
(see \cite{FH} Exercise 6.16 for $\wedge^{2}{\Sym}^2$). 
If we apply $\Phi_{I}$ to these weights, we obtain 
$(2, 0^{g-2})$ and $(3, 1, 0^{g-3})$, 
which correspond to ${\Sym}^2$ and $\wedge^{2}{\Sym}^2$ 
of ${\rm GL}(g-1, {\C})$ respectively. 
This implies $\Phi_{I}\Omega_{{\D}}^{k}=\Omega^{k}_{{\D}_{I}}$. 
\end{proof}

The canonical extension of $\Omega^{k}_{{\D}}$ is 
the logarithmic bundle $\Omega^{k}(\log {\VI})$ of $\overline{{\D}/U(I)_{{\Z}}}$. 
Combining Lemma \ref{lem: Siegel cotangent} with \cite{Ma2} Proposition 4.3, 
we see that $\Omega^{k}(\log {\VI})^{I}|_{{\VI}}$ 
coincides with the sub bundle $\phi^{\ast}\Omega_{{\D}_{I}}^{k}$ of $k$-forms pulled back from ${\D}_{I}$.

\subsection{Hodge bundles}\label{ssec: Siegel Hodge}

We denote by $\mathcal{H}^{p,q}_{I}$, $\mathcal{H}^{p,q}_{I, {\rm prim}}$ 
the (primitive) Hodge bundles on ${\D}_{I}$. 

\begin{lemma}\label{lem: Siegel Hodge}
The following holds. 

(1) $\Phi_{I}\mathcal{H}^{p,0}=\mathcal{H}^{p,0}_{I}$ if $p<g$,  
and $\Phi_{I}\mathcal{H}^{g,0}=\mathcal{H}^{g-1,0}_{I}$. 

(2) $\Phi_{I}\mathcal{H}^{p,q}_{{\rm prim}}=\mathcal{H}^{p,q-1}_{I, {\rm prim}}$ if $q>0$. 
\end{lemma}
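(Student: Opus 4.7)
The plan is to apply the recipe from \S\ref{ssec: Siegel auto VB}: for a highest weight $\lambda=(\lambda_{1}, \ldots, \lambda_{g})$ of ${\rm GL}(g, {\C})$, the Siegel operator acts on automorphic vector bundles by $\Phi_{I}{\El} = ({\E}_{I})_{\lambda'}$ with $\lambda'=(\lambda_{1}, \ldots, \lambda_{g-1})$. So I would identify the highest weight of each Hodge bundle appearing in the statement, truncate by dropping the last coordinate, and recognize the truncation as the claimed bundle on ${\D}_{I}$.

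For part (1), $\mathcal{H}^{p,0} \simeq \wedge^{p}{\E}$ corresponds to $\wedge^{p}{\St}$, with highest weight $(1^{p}, 0^{g-p})$. When $p<g$, the truncation $(1^{p}, 0^{g-1-p})$ is the highest weight of $\wedge^{p}{\St}$ for ${\rm GL}(g-1, {\C})$, so $\Phi_{I}\mathcal{H}^{p,0} = \wedge^{p}{\E}_{I} = \mathcal{H}^{p,0}_{I}$. When $p=g$, the bundle $\mathcal{H}^{g,0} \simeq {\LL}$ has weight $(1^{g})$, and its truncation $(1^{g-1})$ is the determinant character of ${\rm GL}(g-1, {\C})$, so $\Phi_{I}{\LL} = {\LL}_{I} = \wedge^{g-1}{\E}_{I} = \mathcal{H}^{g-1,0}_{I}$. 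For part (2), I would invoke \eqref{eqn: Lef decomp GL}: $\mathcal{H}^{p,q}_{{\prim}}$ corresponds to $V_{\{1^{p}; 1^{q}\}}$ with highest weight $(1^{p}, 0^{g-p-q}, (-1)^{q})$. Since $q>0$, the last coordinate is $-1$, and the truncation $(1^{p}, 0^{g-p-q}, (-1)^{q-1})$, of length $g-1$, is the highest weight of $V_{\{1^{p}; 1^{q-1}\}}$ for ${\rm GL}(g-1, {\C})$. Hence $\Phi_{I}\mathcal{H}^{p,q}_{{\prim}} = \mathcal{H}^{p,q-1}_{I, {\prim}}$.

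The argument is entirely parallel to that of Lemma \ref{lem: Siegel cotangent} and is fundamentally bookkeeping of highest weights; I do not expect any substantive obstacle. The point to watch is the Hodge degree shift visible in the two cases with nonzero last coordinate ($p=g$ in (1) and $q>0$ in (2)): both of these shifts result from the same mechanism, namely that when $\lambda_{g} \neq 0$, truncation from ${\rm GL}(g, {\C})$ to ${\rm GL}(g-1, {\C})$ absorbs the corresponding determinant twist into the reinterpretation of the weight on the smaller group.
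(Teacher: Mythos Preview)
Your proposal is correct and is essentially identical to the paper's own proof: both identify the relevant highest weights $(1^{p},0^{g-p})$ and $(1^{p},0^{g-p-q},(-1)^{q})$, drop the last coordinate to pass from ${\rm GL}(g,{\C})$ to ${\rm GL}(g-1,{\C})$, and recognize the resulting weights. Your final paragraph about the mechanism behind the degree shift is a helpful gloss but not present in the paper's terse version.
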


\begin{proof}
(1) The bundle $\mathcal{H}^{p,0}$ corresponds to the highest weight $(1^{p}, 0^{g-p})$. 
If we apply $\Phi_{I}$, we obtain $(1^{p}, 0^{g-p-1})$ if $p<g$, and $(1^{g-1})$ if $p=g$. 
These correspond to $\mathcal{H}^{p,0}_{I}$ and $\mathcal{H}^{g-1,0}_{I}$ respectively. 

(2) The bundle $\mathcal{H}^{p,q}_{{\rm prim}}$ corresponds to the highest weight 
\begin{equation*}
\{ 1^{p}; 1^{q} \} = (1^p, 0^{g-p-q}, (-1)^{q}). 
\end{equation*}
When $q>0$, the Siegel operator sends it to 
\begin{equation*}
(1^p, 0^{g-p-q}, (-1)^{q-1}) = \{ 1^{p}; 1^{q-1} \}, 
\end{equation*}
which corresponds to $\mathcal{H}^{p,q-1}_{I, {\rm prim}}$. 
\end{proof}

We explain a geometric interpretation of the isomorphisms in Lemma \ref{lem: Siegel Hodge} on the product locus ${\AItilde}$. 
We begin with (1). 
Let $\Omega^{p}_{\pi}(\log )$ be the bundle of relative logarithmic $p$-forms for $\pi\colon \mathcal{X}'\to \mathcal{A}'$. 
The canonical extension of $\mathcal{H}^{p,0}$ is naturally isomorphic to $\pi_{\ast}\Omega^{p}_{\pi}(\log )$. 
Let $A=B\times Q_0$ be the degenerate abelian variety over a point of ${\AItilde}$. 
The space $H^{0}(A, \Omega^{p}_{\pi}(\log )|_{A})$ 
consists of logarithmic $p$-forms on $B\times {\proj}^1$ 
which is holomorphic outside $B\times (0)$ and $B\times (\infty)$ 
and has opposite residues there. 
This sits in the exact sequence 
\begin{equation}\label{eqn: log form A}
0 \to H^0(\Omega^{p}_{B}) \to H^{0}(A, \Omega^{p}_{\pi}(\log )|_{A}) \to H^0(\Omega^{p-1}_{B}) \to 0, 
\end{equation}
where the first map is the pullback by $B\times {\proj}^1\to B$ 
and the second map takes the residue at $B\times (0)$. 
This sequence can be splitted as 
\begin{equation}\label{eqn: split log form}
H^{0}(A, \Omega^{p}_{\pi}(\log )|_{A}) = H^0(\Omega^{p}_{B}) \; \oplus \; (H^0(\Omega^{p-1}_{B})  \wedge  dz/z), 
\end{equation}
where $z$ is the coordinates on ${\proj}^1$. 

By the $I_0$-trivialization $\mathcal{H}^{p,0}\simeq \wedge^{p}I_{0}^{\vee}\otimes \mathcal{O}_{{\D}}$, 
the exact sequence \eqref{eqn: log form A} corresponds to 
\begin{equation*}\label{eqn: p-form I_0}
0 \to \wedge^{p}(I_0/I)^{\vee} \to \wedge^{p}I_{0}^{\vee} \to I^{\vee}\otimes \wedge^{p-1}(I_0/I)^{\vee} \to 0. 
\end{equation*}
In particular, the residue map 
$H^{0}(A, \Omega^{p}_{\pi}(\log )|_{A}) \to H^0(\Omega^{p-1}_{B})$ 
is identified with the map 
$\wedge^{p}I_{0}^{\vee} \to \wedge^{p-1}(I_0/I)^{\vee}$ 
given by the contraction with a generator of $I\cap \Lambda$. 
When $p<g$, the $U(I, I_0)$-invariant part of $\wedge^{p}I_{0}^{\vee}$ is the subspace $\wedge^{p}(I_0/I)^{\vee}$. 
Thus the fiber of $(\mathcal{H}^{p,0})^{I}$ over $[A]\in {\AItilde}$ 
is identified with the subspace $H^0(\Omega^{p}_{B})$ of $H^{0}(A, \Omega^{p}_{\pi}(\log )|_{A})$. 
On the other hand, when $p=g$, the subspace $H^0(\Omega_{B}^{g})=\wedge^{g}(I_{0}/I)^{\vee}$ vanishes for degree reason. 
In this case, the isomorphism $(\mathcal{H}^{g,0})^{I}\simeq \mathcal{H}^{g-1,0}_{I}$ 
corresponds instead to the second map in \eqref{eqn: log form A}. 

Next we consider (2) of Lemma \ref{lem: Siegel Hodge}. 
We write 
\begin{equation*}
H^{p,q}(A) = H^{0}(A, \Omega^{p}_{\pi}(\log )|_{A}) \otimes \overline{H^{0}(A, \Omega^{q}_{\pi}(\log )|_{A})}. 
\end{equation*}
This is identified with the fiber of $\mathcal{H}^{p,q}$ over $[A]$. 

\begin{lemma}\label{prop: Siegel Hodge geometric II}
Let $q>0$. 
The fiber of $(\mathcal{H}^{p,q}_{{\rm prim}})^{I}$ over $[A]\in {\AItilde}$ 
corresponds to the subspace 
$H^{p,q-1}_{{\rm prim}}(B)\wedge (d\bar{z}/\bar{z})$ of $H^{p,q}(A)$. 
\end{lemma}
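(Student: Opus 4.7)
The plan is to combine the Jacquet-module formula \eqref{eqn: U-inv decomp}, which controls $(\mathcal{H}^{p,q}_{{\rm prim}})^{I}$ in the $I_{0}$-trivialization, with the geometric splittings of log differentials from the proof of part (1) and their complex conjugates. The content of the statement is that the vanishing-cycle line $I_{\mathbb{C}}\subset I_{0}\simeq \mathcal{E}^{\vee}$ is, at $[A]\in \AItilde$, precisely the $d\bar{z}/\bar{z}$-direction in $H^{0,1}(A)$, so that the representation-theoretic ``tensor with $I$'' translates geometrically into ``wedge with $d\bar{z}/\bar{z}$''.

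First I would apply the splitting \eqref{eqn: split log form} to both $H^{0}(A,\Omega^{p}_{\pi}(\log)|_{A})$ and its complex conjugate $\overline{H^{0}(A,\Omega^{q}_{\pi}(\log)|_{A})}$, obtaining the K\"unneth-type decomposition
\begin{equation*}
H^{p,q}(A) \;=\; H^{p,q}(B) \;\oplus\; (H^{p,q-1}(B)\wedge d\bar{z}/\bar{z}) \;\oplus\; (H^{p-1,q}(B)\wedge dz/z) \;\oplus\; (H^{p-1,q-1}(B)\wedge dz/z\wedge d\bar{z}/\bar{z}).
\end{equation*}
Under the $I_{0}$-trivialization these four summands match the four blocks of $\wedge^{p}I_{0}^{\vee}\otimes \wedge^{q}I_{0}$ induced by a chosen splitting of $I_{0}=(I_{0}/I)\oplus I$.

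Next, applying \eqref{eqn: U-inv decomp} to $\lambda=\{1^{p};1^{q}\}$ (whose last entry is $-1$ since $q\geq 1$) gives
\begin{equation*}
V(I_{0})^{I}_{\{1^{p};1^{q}\}} \;\simeq\; V(I_{0}/I)_{\{1^{p};1^{q-1}\}}\,\otimes\, I_{\mathbb{C}}.
\end{equation*}
The left-hand side is the fiber of $(\mathcal{H}^{p,q}_{{\rm prim}})^{I}$ at $[A]$, and the first factor on the right, via the $(I_{0}/I)$-trivialization of $\mathcal{H}^{p,q-1}_{I,{\rm prim}}$, is $H^{p,q-1}_{{\rm prim}}(B)$. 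It remains to identify the factor $I_{\mathbb{C}}\subset \mathcal{E}^{\vee}|_{[A]}=H^{0,1}(A)$ with the line $\mathbb{C}\cdot d\bar{z}/\bar{z}$: this is the dual/complex-conjugate counterpart of the residue computation on the $\mathcal{H}^{p,0}$-side carried out in part (1). Concretely, part (1) identifies the residue quotient $I^{\vee}=I_{0}^{\vee}/(I_{0}/I)^{\vee}$ with $\mathbb{C}\cdot[dz/z]$, so dualizing the short exact sequence $0\to (I_{0}/I)^{\vee}\to I_{0}^{\vee}\to I^{\vee}\to 0$ and matching with the conjugated splitting places $I_{\mathbb{C}}$ in the $\overline{dz/z}=d\bar{z}/\bar{z}$-direction. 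Combining with the Step 1 decomposition then yields $(\mathcal{H}^{p,q}_{{\rm prim}})^{I}|_{[A]}=H^{p,q-1}_{{\rm prim}}(B)\wedge d\bar{z}/\bar{z}$ as the second summand of $H^{p,q}(A)$.

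The main difficulty is the geometric identification of $I_{\mathbb{C}}$ with the $d\bar{z}/\bar{z}$-line. Note that for a nilpotent group action, invariants in a tensor product are strictly larger than the tensor product of invariants, so the naive guess ``$(\wedge^{p}I_{0}^{\vee}\otimes \wedge^{q}I_{0})^{U}=\wedge^{p}(I_{0}/I)^{\vee}\otimes \wedge^{q}(I_{0}/I)$'' is wrong; what \eqref{eqn: U-inv decomp} supplies is precisely the correct ``correction factor'' $I_{\mathbb{C}}$, and Step 2 is what pins down its geometric meaning. Once this is in place, the remaining steps are formal consequences of \eqref{eqn: U-inv decomp} and the bi-log splitting.
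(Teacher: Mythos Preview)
Your proposal is correct and follows essentially the same route as the paper. Both arguments split $H^{p,q}(A)\simeq \wedge^{p}I_{0}^{\vee}\otimes\wedge^{q}I_{0}$ into the four K\"unneth blocks coming from a splitting $I_{0}=(I_{0}/I)\oplus I$, identify the relevant block as $\wedge^{p}(I_{0}/I)^{\vee}\otimes(\wedge^{q-1}(I_{0}/I)\wedge I)$, and read off its geometric meaning as $H^{p,q-1}_{\mathrm{prim}}(B)\wedge d\bar{z}/\bar{z}$. The only organizational difference is that the paper picks out this block as the lowest-weight piece for $\mathrm{GL}(I^{\vee})\simeq\mathbb{C}^{\ast}$ and then passes to the primitive part at the end, whereas you apply \eqref{eqn: U-inv decomp} directly to the irreducible $V_{\{1^{p};1^{q}\}}$ and then identify the factor $I_{\mathbb{C}}$ with the $d\bar{z}/\bar{z}$-line; the underlying identification is the same.
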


\begin{proof}
By the $I_0$-trivialization, 
$H^{p,q}(A)$ corresponds to $\wedge^{p}I_{0}^{\vee}\otimes \wedge^{q}I_{0}$. 
For simplicity we take a splitting $I_0=I\oplus (I_0/I)$ which is dual to \eqref{eqn: split log form}. 
This induces the splitting 
\begin{eqnarray*}
\wedge^{p}I_{0}^{\vee}\otimes \wedge^{q}I_{0} 
& = & 
\wedge^{p}(I_0/I)^{\vee}\otimes \wedge^{q}(I_0/I) \\ 
& & 
\oplus \; 
(\wedge^{p-1}(I_0/I)^{\vee}\wedge I^{\vee} )\otimes \wedge^{q}(I_0/I) \\ 
& & 
\oplus \; 
\wedge^{p}(I_0/I)^{\vee}\otimes (\wedge^{q-1}(I_0/I) \wedge I) \\ 
& & 
\oplus \; 
(\wedge^{p-1}(I_0/I)^{\vee}\wedge I^{\vee} ) \otimes (\wedge^{q-1}(I_0/I) \wedge I). 
\end{eqnarray*}
The Siegel operator singles out the third term 
(the lowest weight part for ${\rm GL}(I^{\vee})\simeq {\C}^{\ast}$). 
This corresponds to 
$H^{p,q-1}(B)\wedge (d\bar{z}/\bar{z})$. 
Taking the primitive part, we obtain 
$H^{p,q-1}_{{\rm prim}}(B)\wedge (d\bar{z}/\bar{z})$. 
\end{proof}

By the above calculation, the isomorphism 
$(\mathcal{H}^{p,q}_{{\rm prim}})^{I}\simeq \mathcal{H}^{p,q-1}_{I, {\rm prim}}$ 
over $[A]\in {\AItilde}$ is identified with the residue map 
\begin{equation*}
H^{p,q-1}_{{\rm prim}}(B) \wedge (d\bar{z}/\bar{z}) \to H^{p,q-1}_{{\rm prim}}(B), \qquad 
\omega\wedge \, d\bar{z}/\bar{z} \mapsto \omega. 
\end{equation*}

\subsection{Koszul complex}\label{ssec: Siegel Koszul}

Let $\mathcal{K}_{g}$ be the Koszul complex over ${\D}$ defined in \S \ref{ssec: Koszul}. 
By Lemma \ref{lem: H1Kg} and Example \ref{example: Phi Sym det}, 
we have $\Phi_{I}(H^1\mathcal{K}_{g})=H^1\mathcal{K}_{g-1}$. 
Let us look at the Siegel operator for the Koszul complex itself, not just its $H^1$. 
We denote by $\Phi_{I}\mathcal{K}_{g}$ the complex in genus $g-1$ 
obtained by applying the Siegel operator to each term of $\mathcal{K}_{g}$. 

\begin{proposition}\label{prop: Siegel 3 to 2}
We have $\Phi_{I}\mathcal{K}_{3}=\mathcal{K}_{2}$. 
Moreover, this coincides with the complex 
\begin{equation}\label{eqn: Siegel 3 to 2}
\Phi_{I}\mathcal{H}^{2,1}_{{\rm prim}} \to 
\Phi_{I}\mathcal{H}^{1,2}_{{\rm prim}} \otimes \Phi_{I}\Omega_{{\D}}^1 \to 
\Phi_{I}\mathcal{H}^{0,3} \otimes \Phi_{I}\Omega_{{\D}}^2,  
\end{equation}
namely $\Phi_{I}$ and $\otimes$ commute. 
\end{proposition}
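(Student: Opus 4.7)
The plan is to compute $\Phi_I$ on each term of $\mathcal{K}_3$, match it with the corresponding term of $\mathcal{K}_2$, verify tensor commutativity, and then check that the differentials agree.

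On the individual factors, Lemma \ref{lem: Siegel Hodge}(2) gives $\Phi_I \mathcal{H}^{2,1}_{\prim} = \mathcal{H}^{2,0}_{I, \prim} = \mathcal{H}^{2,0}_I$ (the Lefschetz decomposition is trivial at the top), $\Phi_I \mathcal{H}^{1,2}_{\prim} = \mathcal{H}^{1,1}_{I, \prim}$, and, noting $\mathcal{H}^{0,3} = \mathcal{H}^{0,3}_{\prim}$ in genus $3$, also $\Phi_I \mathcal{H}^{0,3} = \mathcal{H}^{0,2}_I$. Lemma \ref{lem: Siegel cotangent} (valid since $1, 2 < g = 3$) gives $\Phi_I \Omega^k_{\D} = \Omega^k_{\D_I}$ for $k = 1, 2$. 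Each factor of $\mathcal{K}_3$ therefore maps under $\Phi_I$ to the corresponding factor of $\mathcal{K}_2$.

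For the tensor-commutativity statement, the inclusion $\Phi_I V \otimes \Phi_I W \subseteq \Phi_I (V \otimes W)$ is automatic from $V^{U(I, I_0)} \otimes W^{U(I, I_0)} \subseteq (V \otimes W)^{U(I, I_0)}$. Equality for the two relevant pairs is a direct ${\rm GL}(3, {\C})$-representation-theoretic calculation. I would decompose each $\mathcal{H}^{p,q}_{\prim} \otimes \Omega^k$ into irreducibles (using again the identity ${\Sym}^2 \otimes {\Sym}^2 = {\Sym}^4 \oplus ({\Sym}^2 \wedge^2) \oplus (\wedge^2 {\Sym}^2)$ that appeared in the proof of Lemma \ref{lem: H1Kg}), apply $\Phi_I$ to each summand via the highest-weight formula of \S\ref{ssec: Siegel auto VB}, and compare the result with the independent decomposition of $\Phi_I \mathcal{H}^{p,q}_{\prim} \otimes \Phi_I \Omega^k$ as a ${\rm GL}(2, {\C})$-representation.

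Finally, the differentials. The Koszul differentials of $\mathcal{K}_3$ are $\mathcal{O}$-linear maps induced by Gauss--Manin; under the $I_0$-trivialization they are fixed linear contractions equivariant for the Levi subgroup of $P(I, I_0)$. Restriction to $\VI$ and passage to $U(I, I_0)$-invariants commutes with such a contraction, and the geometric identifications of \S\ref{ssec: Siegel Hodge} (the residue of logarithmic forms along the zero section, and wedging with $d\bar{z}/\bar{z}$) confirm that the resulting invariant subcomplex on $\VI$ descends along $\phi \colon \VI \to \D_I$ to the Koszul complex $\mathcal{K}_2$.

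The main obstacle is the tensor commutativity: in general $(V \otimes W)^{U(I, I_0)}$ is strictly larger than $V^{U(I, I_0)} \otimes W^{U(I, I_0)}$, so the claim reflects a specific feature of the representations in play. The cleanest confirmation is the explicit Schur-functor decomposition above, which essentially reuses the ingredients already assembled for Lemma \ref{lem: H1Kg}.
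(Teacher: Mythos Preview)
Your proposal is correct and follows essentially the same route as the paper: identify $\Phi_I$ on the individual factors via Lemmas~\ref{lem: Siegel cotangent} and~\ref{lem: Siegel Hodge}, then verify the tensor commutativity by decomposing into irreducibles (the paper first twists by $\mathcal{L}$ so that both factors of the middle term become ${\Sym}^2$, and uses that $\Phi_I$ commutes with $\mathcal{L}$-twist to handle the last term in one line, but this is the same calculation you describe). Your additional paragraph on the differentials is not in the paper's proof---the paper treats the statement purely as a termwise identification of automorphic vector bundles and leaves compatibility of the Koszul maps implicit---so your version is slightly more complete on that point.
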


\begin{proof}
The coincidence of $\mathcal{K}_{2}$ with \eqref{eqn: Siegel 3 to 2} follows from 
Lemma \ref{lem: Siegel cotangent} and Lemma \ref{lem: Siegel Hodge}.  
Since $\Phi_{I}$ commutes with the twist by ${\LL}=\mathcal{H}^{3,0}$, we find that 
$\Phi_{I}(\mathcal{H}^{0,3} \otimes \Omega_{{\D}}^2)$ equals to 
$\mathcal{H}^{0,2}_{I} \otimes \Omega_{{\D}_{I}}^2$. 
It remains to verify 
\begin{equation*}
\Phi_{I}(\mathcal{H}^{1,2}_{{\rm prim}} \otimes \Omega_{{\D}}^1) = \mathcal{H}^{1,1}_{I, {\rm prim}} \otimes \Omega_{{\D}_{I}}^1. 
\end{equation*} 
We take the twist by ${\LL}$. 
Both $\mathcal{H}^{1,2}_{{\rm prim}}\otimes {\LL}$ and $\Omega_{{\D}}^1$ 
correspond to the ${\rm GL}(3, {\C})$-representation ${\Sym}^2$. 
If we apply $\Phi_{I}$ to the ${\rm GL}(3, {\C})$-representation 
\begin{equation*}
{\Sym}^2\otimes {\Sym}^2 = 
{\Sym}^4 \oplus ({\Sym}^2 \! \wedge^2 ) \oplus (\wedge^2{\Sym}^2), 
\end{equation*}
we obtain the ${\rm GL}(2, {\C})$-representation 
\begin{equation*}
{\Sym}^4 \oplus {\rm det}^{\otimes 2} \oplus (\wedge^2{\Sym}^2) = 
{\Sym}^2\otimes {\Sym}^2. 
\end{equation*}
This corresponds to $\mathcal{H}^{1,1}_{I, {\rm prim}} \otimes {\LL}_{I} \otimes \Omega_{{\D}_{I}}^1$. 
\end{proof}

In the next genus $g=2$, we have $\Phi_{I} \mathcal{K}_{2}\ne \mathcal{K}_{1}$. 
Instead the following holds. 

\begin{proposition}\label{prop: Sigel Koszul g=2}
The complex $\mathcal{K}_{1}$ equals to 
$0\to \Phi_{I}\mathcal{H}^{1,1}_{{\rm prim}} \otimes \Phi_{I} \Omega_{{\D}}^1 \to 0$. 
\end{proposition}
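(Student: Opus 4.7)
The plan is to verify the asserted equality term by term, using the Siegel operator computations for Hodge bundles and for the cotangent bundle already established in Lemma \ref{lem: Siegel Hodge} and Lemma \ref{lem: Siegel cotangent}. Since the explicit form of $\mathcal{K}_{1}$ recalled in \S \ref{ssec: Koszul} is
\begin{equation*}
0 \;\to\; \mathcal{H}^{1,0}_{I}\otimes \Omega^{1}_{{\D}_{I}} \;\to\; 0,
\end{equation*}
with zero differentials on both sides, no compatibility of differentials has to be checked: the claim reduces to a natural isomorphism of the single nontrivial object.

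For the Hodge factor, I would apply Lemma \ref{lem: Siegel Hodge}(2) with $(p,q)=(1,1)$ to obtain $\Phi_{I}\mathcal{H}^{1,1}_{{\rm prim}} = \mathcal{H}^{1,0}_{I,{\rm prim}}$. Since ${\D}_{I}$ is a one-dimensional Siegel upper half space, the Lefschetz decomposition on $H^{1}$ of an elliptic curve is trivial in bidegree $(1,0)$ (there is no lower-bidegree summand to subtract off), so $\mathcal{H}^{1,0}_{I,{\rm prim}}=\mathcal{H}^{1,0}_{I}$. For the cotangent factor, Lemma \ref{lem: Siegel cotangent} applies with $k=1$ (since $k\leq 2$ and $k<g=2$) and gives $\Phi_{I}\Omega^{1}_{{\D}}=\Omega^{1}_{{\D}_{I}}$. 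Tensoring the two identifications yields
\begin{equation*}
\Phi_{I}\mathcal{H}^{1,1}_{{\rm prim}}\otimes \Phi_{I}\Omega^{1}_{{\D}} \;\simeq\; \mathcal{H}^{1,0}_{I}\otimes \Omega^{1}_{{\D}_{I}},
\end{equation*}
which is precisely the middle term of $\mathcal{K}_{1}$.

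Since the argument is a direct application of previously proved identities, no genuine obstacle arises. It is worth emphasizing, however, why the statement takes this asymmetric form: unlike the genus-$3$-to-genus-$2$ situation of Proposition \ref{prop: Siegel 3 to 2}, the Siegel operator applied to the full complex $\mathcal{K}_{2}$ does \emph{not} recover $\mathcal{K}_{1}$. Indeed, Lemma \ref{lem: Siegel Hodge}(1) gives $\Phi_{I}\mathcal{H}^{2,0}=\mathcal{H}^{1,0}_{I}$, and a similar computation (using Example \ref{example: Phi Sym det} after twisting by ${\LL}$) shows that $\Phi_{I}(\mathcal{H}^{0,2}\otimes \Omega^{2}_{{\D}})$ is also nonzero. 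Only the middle term of $\mathcal{K}_{2}$ matches the middle term of $\mathcal{K}_{1}$; the outer zeros of $\mathcal{K}_{1}$ are supplied by the definition of the genus-$1$ Koszul complex itself, not by the Siegel operator killing anything.
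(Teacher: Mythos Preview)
Your proof is correct and follows essentially the same approach as the paper: both invoke Lemma \ref{lem: Siegel Hodge}(2) to identify $\Phi_{I}\mathcal{H}^{1,1}_{{\rm prim}}$ with $\mathcal{H}^{1,0}_{I}$, and Lemma \ref{lem: Siegel cotangent} to identify $\Phi_{I}\Omega^{1}_{{\D}}$ with $\Omega^{1}_{{\D}_{I}}$, then compare with the explicit form of $\mathcal{K}_{1}$. Your added paragraph explaining why $\Phi_{I}\mathcal{K}_{2}\neq\mathcal{K}_{1}$ is useful context not present in the paper's terse proof, and is consistent with the remark preceding the proposition.
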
 

\begin{proof}
By Lemma \ref{lem: Siegel cotangent} and Lemma \ref{lem: Siegel Hodge}, we have 
$\Phi_{I} \mathcal{H}^{1,1}_{{\rm prim}} = \mathcal{H}^{1,0}_{I} = {\LL}_{I}$ and 
$\Phi_{I} \Omega_{{\D}}^1 = \Omega_{{\D}_{I}}^1$. 
\end{proof}

In both cases, $H^1\mathcal{K}_{g-1}=\Phi_{I}(H^1\mathcal{K}_{g})$ 
is naturally realized as a sub automorphic vector bundle of 
$\Phi_{I} \mathcal{H}^{1,g-1}_{{\rm prim}} \otimes \Phi_{I} \Omega_{{\D}}^1$. 
This implies that $(H^1\mathcal{K}_{g})^{I}$ is realized as a sub bundle of 
$(\mathcal{H}^{1,g-1}_{{\rm prim}})^{I} \otimes (\Omega_{{\D}}^1)^{I}$. 
This is the property used in the next \S \ref{sec: proof}.

\section{Proof of Theorem \ref{thm: elevator Siegel}}\label{sec: proof}

In this section we prove Theorem \ref{thm: elevator Siegel}. 
We keep the setting and notation of \S \ref{sec: elevator}.

\subsection{Extension to the boundary}\label{ssec: cusp reduction cover}

Recall from \S \ref{ssec: main construction} that 
$\delta\nu_{Z}^{+}$ is a holomorphic section of $f^{\ast}({\Sym}^4{\E}\otimes {\LL}^{-1})$ over $S$. 
We regard it as a meromorphic section of $f^{\ast}({\Sym}^4{\E}\otimes {\LL}^{-1})$ over $S'$,  
where 
${\Sym}^4{\E}\otimes {\LL}^{-1}$ extends over $U'\subset \mathcal{A}_{{\G}}'$ by the canonical extension. 

\begin{lemma}\label{lem: holomorphic at boundary}
As a section of $f^{\ast}({\Sym}^4{\E}\otimes {\LL}^{-1})$, 
$\delta\nu_{Z}^{+}$ extends holomorphically over the boundary divisor $\Delta_{S}$. 
\end{lemma}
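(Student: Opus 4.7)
The plan is to invoke admissibility of the normal function $\nu_{Z}$ on the partial toroidal compactification $\AGtor$ and then to identify the logarithmic extension of the Koszul cohomology sheaf with the canonical extension of the automorphic vector bundle $\Sym^{4}\E\otimes \LL^{-1}$.

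First I would apply admissibility. Since $X'\to S'$ is a rank $1$ degeneration and $\Delta_{S}$ is a normal crossing divisor, the local system $R^{2p-n-1}\pi_{\ast}\Z$ has unipotent monodromy around $\Delta_{S}$. As recalled in \S\ref{ssec: NF}, admissibility of $\nu_{Z}$ implies that $\delta\nu_{Z}$ extends holomorphically over $S'$ as a section of $H^{1}\mathcal{K}^{2,g-2}_{S'}(\log \Delta_{S})$. Projection onto the primitive part via the decomposition \eqref{eqn: Lef Koszul H1} then yields a holomorphic section
$\delta\nu_{Z}^{+} \in H^{0}(S',\, H^{1}\mathcal{K}_{g, S'}(\log \Delta_{S}))$.

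The heart of the proof is the identification
\begin{equation*}
H^{1}\mathcal{K}_{g, S'}(\log \Delta_{S}) \;\simeq\; f^{\ast}(\Sym^{4}\E \otimes \LL^{-1}),
\end{equation*}
where the right-hand side is the canonical extension over $U'\subset \AGtor$. Over $U$ this is exactly Lemma~\ref{lem: H1Kg}. To extend the isomorphism across $\Delta_{I}$, I would use two facts from the theory of toroidal compactifications: first, the canonical Deligne extensions of the Hodge bundles $\mathcal{H}^{p,q}$ across $\Delta_{I}$ agree with the corresponding automorphic vector bundles on $\AGtor$; second, the logarithmic cotangent sheaves satisfy $\Omega^{k}_{\AGtor}(\log \Delta_{I}) \simeq \wedge^{k}\Sym^{2}\E$ for $k\leq 2$, again as canonical extensions. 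With these identifications, the logarithmic Koszul complex $\mathcal{K}^{p,q}_{U'}(\log \Delta_{U})$ is realised as a complex of automorphic vector bundles in canonical extension, and the representation-theoretic computation of Lemma~\ref{lem: H1Kg} applies verbatim. Pulling back by the \'etale morphism $f$ and combining with the admissibility step then produces the desired holomorphic extension of $\delta\nu_{Z}^{+}$.

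The main obstacle is the second step: explaining why the logarithmic factor, which would a priori produce a pole of order $\leq 1$ along $\Delta_{S}$, is fully absorbed by the canonical extension of $\Sym^{4}\E \otimes \LL^{-1}$. The conceptual content of this absorption is encoded in the isomorphism $\Omega^{1}_{\AGtor}(\log \Delta_{I}) \simeq \Sym^{2}\E$: the Gauss--Manin connection in the Koszul complex carries a logarithmic factor near $\Delta_{S}$, but it lands in a sheaf whose canonical extension already incorporates this factor, so the resulting cohomology sheaf extends holomorphically without any singularity.
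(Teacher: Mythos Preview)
Your proposal is correct and follows essentially the same approach as the paper: use admissibility of $\nu_{Z}$ to extend $\delta\nu_{Z}^{+}$ into $H^{1}\mathcal{K}_{g,S'}(\log\Delta_{S})$, then identify this sheaf with $f^{\ast}(\Sym^{4}\E\otimes\LL^{-1})$ by recognizing the logarithmic Koszul complex over $U'$ as the canonical extension of the automorphic-bundle complex (so that Lemma~\ref{lem: H1Kg} applies verbatim). Your explicit remark that $\Omega^{1}_{\AGtor}(\log\Delta_{I})\simeq\Sym^{2}\E$ absorbs the logarithmic pole is exactly the point the paper summarizes by saying the logarithmic extension ``coincides with the canonical extension as a complex of automorphic vector bundles''.
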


\begin{proof}
When $g=1$, this is proved in the proof of Theorem \ref{thm: main construction}. 
The case $g>1$ is similar, and the argument proceeds as follows. 
The logarithmic extension $\mathcal{K}_{g, U'}(\log \Delta_{U})$ 
of the Koszul complex $\mathcal{K}_{g, U}$ over $U'$ 
coincides with the canonical extension as a complex of automorphic vector bundles. 
It follows that $H^{1}\mathcal{K}_{g, U'}(\log \Delta_{U})$ coincides with the canonical extension of 
$H^1\mathcal{K}_{g} \simeq {\Sym}^4{\E}\otimes {\LL}^{-1}$ as an automorphic vector bundle. 
Then, by the admissibility of $\nu_{Z}$ along $\Delta_{S}$, we see that $\delta\nu_{Z}^{+}$ as a section of 
\begin{equation*}
H^1\mathcal{K}_{g, S'}(\log \Delta_{S}) \simeq f^{\ast}H^1\mathcal{K}_{g, U'}(\log \Delta_{U}) 
\simeq  f^{\ast}({\Sym}^4{\E}\otimes {\LL}^{-1}) 
\end{equation*}
extends holomorphically over $S'$. 
\end{proof}

Next we show that a property in the Siegel operator (Proposition \ref{prop: Siegel meromorphic}) 
holds already for $\delta\nu_{Z}^{+}$ before pushing it down to ${\AG}$. 

\begin{lemma}\label{lem: cusp reduction cover}
The restriction of $\delta\nu_{Z}^{+}$ to $\Delta_{S}$ takes values in the sub vector bundle 
$f^{\ast}({\Sym}^{4}{\E^{I}}\otimes {\LL}^{-1})$ of $f^{\ast}({\Sym}^{4}{\E}\otimes {\LL}^{-1})$. 
\end{lemma}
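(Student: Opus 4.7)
The plan is to verify the inclusion at a generic point of $\Delta_S\cap S_I$ (where $S_I = f^{-1}(\AItilde)$) via the K\"unneth decomposition of product degenerations, and then propagate to all of $\Delta_S$. By the discussion at the end of \S\ref{ssec: Siegel Koszul}, the canonical extension of $H^1\mathcal{K}_g$ embeds into $\mathcal{H}^{1,g-1}_{\prim}\otimes \Omega^1_{\D}$, and on $\VI$ the sub-bundle $(H^1\mathcal{K}_g)^I$ is precisely the intersection of this embedding with $(\mathcal{H}^{1,g-1}_{\prim})^I\otimes(\Omega^1_{\D})^I$. By Proposition \ref{prop: Siegel Hodge geometric II} and Lemma \ref{lem: Siegel cotangent}, these invariant sub-bundles on $\VI$ correspond geometrically, over a product-type degenerate fiber $A=B\times Q_0$, to the K\"unneth summand $H^{1,g-2}_{\prim}(B)\wedge(d\bar z/\bar z)$ and to $\phi^\ast\Omega^1_{\D_I}$ respectively. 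Thus the statement at a point of $S_I$ reduces to showing that the Hodge factor of $\delta\nu_Z^+$ at the boundary is a K\"unneth residue class, and that its $\Omega^1$-factor is horizontal.

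To establish this, I would choose a local base coordinate $s$ on $S'$ transverse to $\Delta_S$ at the chosen point of $S_I$. By Lemma \ref{lem: holomorphic at boundary} and admissibility of $\nu_Z$, the infinitesimal invariant $\delta\nu_Z$ extends holomorphically as a section of $H^1\mathcal{K}_{g,S'}(\log \Delta_S)$; under the logarithmic Gauss--Manin connection, the $ds/s$-component of its $\Omega^1$-factor is paired with the nilpotent monodromy $N = v_I \smile \cdot$ acting on the Hodge factor. Since $A$ is a product, the Steenbrink limit mixed Hodge structure together with the vanishing cycle both split via K\"unneth, which combined with admissibility forces the boundary value of $\delta\nu_Z^+$ into the K\"unneth summand with horizontal $\Omega^1$-factor. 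This verifies the inclusion on $\Delta_S\cap S_I$. The containment then propagates to the rest of $\Delta_S$ because each fiber of $\Delta_S\to \AI$ meets $S_I$ under assumption (5) of \S\ref{ssec: rank 1}, and the $U(I,I_0)$-invariance condition is $P(I)$-equivariant along such fibers.

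The main obstacle is the K\"unneth identification of the boundary residue of $\delta\nu_Z$. While it follows in principle from the Steenbrink--Clemens--Schmid description of the limit MHS on a product nodal degeneration, the most efficient route is to invoke the limit formula of \cite{elevator}, which is designed for exactly this product-degeneration setting and serves as the principal input for Theorem \ref{thm: elevator Siegel} in the subsequent section. In this sense the present lemma may be viewed as a partial shadow, at the level of the ambient automorphic vector bundle, of the main theorem.
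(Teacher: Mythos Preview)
Your approach diverges substantially from the paper's, and it has a real gap.

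The paper's proof is purely algebraic and very short. It uses the standing assumption (made in \S\ref{ssec: main state}) that the divisor components of $\AG - U$ do not contain $\AI$ in their closure, so that the meromorphic modular form $f_{Z}^{(d)}$ is regular at $I$. By Proposition~\ref{prop: Siegel meromorphic}, the restriction $f_{Z}^{(d)}|_{\Delta_{U}}$ therefore takes values in $(\Sym^{4d}\E)^{I}\otimes\LL^{\otimes -d}$. But $f_{Z}^{(d)}$ at a point $p\in\Delta_{U}$ is the product $P_{1}\cdots P_{d}$ of the quartic forms $P_{i}=\delta\nu_{Z}^{+}(s_{i})$ over the $d$ preimages $s_{i}\in f^{-1}(p)$. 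In coordinates $x_{1},\dots,x_{g}$ on $I_{0}^{\vee}$ with $x_{1},\dots,x_{g-1}$ spanning $(I_{0}/I)^{\vee}$, the sub-bundle condition says $P_{1}\cdots P_{d}$ has degree $0$ in $x_{g}$; hence each $P_{i}$ already has degree $0$ in $x_{g}$. This is the entire argument, and it works at every point of $\Delta_{S}$ at once.

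Your route instead tries to verify the condition geometrically on $S_{I}$ via K\"unneth and the limit formula, then propagate along fibers of $\Delta_{S}\to\AI$. The propagation step is the gap: $S_{I}$ has codimension $g-1$ in $\Delta_{S}$, and $\delta\nu_{Z}^{+}$ is \emph{not} $\G$-invariant (only its symmetric polynomials are), so there is no equivariance principle that forces its values on an entire $\phi$-fiber from its values on the finite subset $S_{I}$. The phrase ``the $U(I,I_{0})$-invariance condition is $P(I)$-equivariant along such fibers'' asserts equivariance of the \emph{sub-bundle}, which is true, but says nothing about the section living in it away from $S_{I}$. Your argument on $S_{I}$ itself is also only sketched, and in effect appeals to the same limit computation that drives Proposition~\ref{prop: equality SI}; even granting it, you would still be missing the rest of $\Delta_{S}$. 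The paper sidesteps all of this by passing through the genuinely modular object $f_{Z}^{(d)}$ and using unique factorization of polynomials.
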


\begin{proof}
Since the meromorphic Siegel modular form $f_{Z}^{(d)}$ is regular at $I$ by our assumption, 
it takes values in the sub bundle $({\Sym}^{4d}{\E})^{I}\otimes {\LL}^{\otimes -d}$ at $\Delta_{U}$ 
by Proposition \ref{prop: Siegel meromorphic}. 
This is the product of $\delta\nu_{Z}^{+}|_{\Delta_{S}}$ along the fibers of the etale covering 
$f|_{\Delta_{S}} \colon \Delta_{S}\to \Delta_{U}$. 
We take a basis $x_1, \cdots, x_{g}$ of $I_{0}^{\vee}$ such that 
$(I_0/I)^{\vee}$ is spanned by $x_1, \cdots, x_{g-1}$. 
Then the fiber of ${\Sym}^{\ell}{\E}$ over a point $p$ of $\Delta_{U}$ is 
identified with the space of homogeneous polynomials of $x_1, \cdots, x_{g}$ of degree $\ell$, 
and that of $({\Sym}^{\ell}{\E})^{I}={\Sym}^{\ell}({\E}^{I})$ consists of polynomials of $x_1, \cdots, x_{g-1}$. 

Now let $P_1, \cdots, P_d$ be the quartic forms corresponding to the values of $\delta\nu_{Z}^{+}$ 
at the $d$ points $f^{-1}(p)$. 
(We ignore the twist by ${\LL}$.) 
The property of $f_{Z}^{(d)}$ stated above means that 
the product $P_1 \cdots P_d$ is a polynomial of $x_1, \cdots, x_{g-1}$, 
i.e., has degree $0$ with respect to $x_{g}$. 
This implies that each $P_i$ is already a polynomial of $x_1, \cdots, x_{g-1}$. 
This in turn means that $\delta\nu_{Z}^{+}$ takes values in $f^{\ast}({\Sym}^{4}{\E}^{I}\otimes {\LL}^{-1})$ 
at every point of $f^{-1}(p)$. 
\end{proof}

\subsection{The limit formula}\label{ssec: limit formula}

In this subsection we recall the limit formula of \cite{elevator} \S 2.5 in the present setting. 
Let $X\to T$ be a $1$-parameter degeneration of abelian varieties of dimension $g$, 
where $T$ is a small disc, 
such that the total space $X$ is smooth and 
the central fiber is of the form $B\times Q_0$ 
with $B$ a smooth abelian variety of dimension $g-1$. 
Let $Z$ be a family of nullhomologous higher Chow cycles of type $(2, 3-g)$ on $X/\Delta$. 
For $t\ne 0$ the Abel-Jacobi invariant $\nu(Z_{t})$ of $Z_t$ takes value in 
\begin{equation*}
\frac{H^g(X_{t}, {\C})}{F^2+H^g(X_{t}, {\Z})} \simeq \frac{F^{g-1}H^g(X_{t}, {\C})^{\vee}}{H_g(X_{t}, {\Z})}. 
\end{equation*}
For $t=0$, let $Z_0$ be the K-theory elevator of $Z$. 
Recall that this is a higher Chow cycle of type $(2, 4-g)$ on $B$. 
Its Abel-Jacobi invariant $\nu(Z_{0})$ takes value in 
\begin{equation*}
\frac{H^{g-1}(B, {\C})}{F^2+H^{g-1}(B, {\Z})} \simeq \frac{F^{g-2}H^{g-1}(B, {\C})^{\vee}}{H_{g-1}(B, {\Z})}. 
\end{equation*}

Now we take a $C^{\infty}$-family of test differential forms $\omega_{t} \in F^{g-1}H^g(X_{t})$ 
which converges as $t\to 0$ to a differential form on $B\times Q_0$ of the form 
$(2\pi i)^{-1} \eta_{0}\wedge (dz/z)$ 
where $\eta_{0}\in F^{g-2}H^{g-1}(B)$. 
Then Theorem 2.2 of \cite{elevator} says that 
\begin{equation}\label{eqn: limit formula}
\lim_{t\to 0} \langle \nu(Z_{t}), \omega_{t} \rangle = \langle \nu(Z_{0}), \eta_0 \rangle. 
\end{equation}

\begin{remark}\label{remark: limit formula}
(1) Strictly speaking, the paper \cite{elevator} works with cycles with rational coefficients, 
i.e., modulo torsion cycles, 
but it seems that the calculation in \cite{elevator} \S 2.5 is valid for cycles with integral coefficients. 
We may also work with rational coefficients, 
as this does not affect the infinitesimal invariant. 

(2) What is actually proved in \cite{elevator} \S 2.5 is equality of the KLM currents. 
Thus, if we interpret $\langle \nu(Z_{t}), \, \cdot \, \rangle$ as the KLM current given by 
\cite{elevator} Equation (2.4), 
it can be paired with closed $g$-forms not necessarily in $F^{g-1}$, 
and \eqref{eqn: limit formula} is valid for such a family $(\omega_{t})$. 
The KLM current represents the Abel-Jacobi invariant when tested against forms in $F^{g-1}$. 
\end{remark}

\subsection{The case $g=2$}\label{ssec: g=2}

Now we prove Theorem \ref{thm: elevator Siegel}, first in the case $g=2$. 
The bulk of proof is to establish the equality over $S_{I}$. 
Note that $\delta\nu_{Z_{I}}^{+}=\delta\nu_{Z_{I}}$ in this case. 
By Lemma \ref{lem: cusp reduction cover}, 
both $\delta\nu_{Z}^{+}|_{S_{I}}$ and $\delta\nu_{Z_{I}}^{+}$ are sections of 
the vector bundle 
\begin{equation*}\label{eqn: VB over SI}
f^{\ast}({\Sym}^4{\E}^{I}\otimes {\LL}^{-1})|_{S_{I}} 
= f^{\ast}\phi^{\ast}({\Sym}^4{\E}_{I}\otimes {\LL}_{I}^{-1})|_{S_{I}} 
= (f|_{S_{I}})^{\ast}\phi_{0}^{\ast}({\Sym}^{4}{\E}_{I}\otimes {\LL}_{I}^{-1}) 
\end{equation*}
over $S_{I}$. 

\begin{proposition}\label{prop: equality SI}
We have $\delta\nu_{Z}^{+}|_{S_{I}} = \delta\nu_{Z_{I}}^{+}$ up to constant 
as sections of the above vector bundle. 
\end{proposition}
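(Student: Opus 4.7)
The strategy is to verify the equality pointwise at a general $s_0 \in S_I$ by testing both sides against common data and applying the limit formula \eqref{eqn: limit formula}. Fix such $s_0$, let $B = X_I|_{s_0}$ denote the corresponding elliptic curve, so that $X'|_{s_0} = B \times Q_0$. By the Siegel-operator calculation of the Koszul complex (Proposition \ref{prop: Sigel Koszul g=2} combined with Lemma \ref{lem: Siegel Hodge} and Lemma \ref{prop: Siegel Hodge geometric II}), both $\delta\nu_{Z}^{+}|_{s_0}$ and $\delta\nu_{Z_I}^{+}|_{s_0}$ take values in the same fibre $\mathcal{L}_{I}\otimes \Omega^{1}_{S_I}|_{s_0}$, and the identification $(\mathcal{H}^{1,1}_{{\rm prim}})^{I}|_{[B \times Q_0]} \simeq \mathcal{H}^{1,0}_{I}|_{[B]}$ is the residue map $\eta \wedge d\bar{z}/\bar{z} \mapsto \eta$ for $\eta \in H^{1,0}(B)$.

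Pick a tangent vector $v \in T_{s_0}S_I$ and a class $\eta_0 \in H^{0,1}(B)$, so that $\omega_0 := (2\pi i)^{-1}\eta_0 \wedge dz/z$ is a $(1,1)$-type logarithmic form on $B \times Q_0$ lying in the $F^{1}$-piece relevant to the limit formula. Choose a holomorphic transverse arc $\gamma \colon (T,0) \to (S',s_0)$ with coordinate $t$ and extend $v$ to a local vector field on $S'$ with $dt(v) = 0$. Using admissibility of $\nu_Z$ along $\Delta_S$ together with the logarithmic extension of $\mathcal{H}^{2}_{{\rm prim}}$ and its Gauss--Manin connection, I would construct a $C^{\infty}$-family $\omega_t \in F^{1}H^{2}(X_t)_{{\rm prim}}$ which is horizontal along $v$ for the canonical-extension connection and which converges as $t \to 0$ to $\omega_0$ in the sense of Remark \ref{remark: limit formula}(2).

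By $v$-horizontality, the $v$-derivative of the KLM pairing $\langle \nu_Z, \omega_t \rangle$ equals $\langle \delta\nu_Z^{+}(v), \omega_t \rangle$; the $\mathcal{H}^{2,0}$-component of $\omega_t$ drops out, as it pairs trivially with $\mathcal{H}^{1,1}_{{\rm prim}}$ inside $H^{2}(X_t)$ (dimension reasons). By the current form of the limit formula in Remark \ref{remark: limit formula}(2), the pairing $\langle \nu_Z, \omega_t \rangle$ converges to $\langle \nu_{Z_I}, \eta_0 \rangle$ as $t \to 0$. The interchange of the $v$-derivative with this limit is legitimated by Lemma \ref{lem: holomorphic at boundary}, which ensures that $\delta\nu_Z^{+}$ itself already extends smoothly across $\Delta_S$. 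One therefore obtains
\[
\langle \delta\nu_Z^{+}(v)|_{s_0},\ \omega_0 \rangle \;=\; (2\pi i)^{-1} \langle \delta\nu_{Z_I}^{+}(v),\ \eta_0 \rangle.
\]
Letting $v$ and $\eta_0$ vary and invoking the residue identification of Lemma \ref{prop: Siegel Hodge geometric II} to decode the left-hand side, one recovers the claimed equality of $\mathcal{L}_{I}\otimes \Omega^{1}_{S_I}$-valued sections, with constant $(2\pi i)^{-1}$.

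The main obstacle will be the construction of the horizontal test-form family $\omega_t$ with the prescribed logarithmic limit and, paired with it, the justification for commuting the $v$-derivative with the $t \to 0$ limit. Both rest on the admissibility of $\nu_Z$ and on the special geometry of the product locus (trivial shift and trivial line bundle $L$), by virtue of which the degenerate fibre takes the clean product form $B \times Q_0$ and the residue identifications of \S \ref{sec: Siegel Koszul} apply verbatim. Once Proposition \ref{prop: equality SI} is established in this pointwise form on $S_I$, the subsequent passage to a statement on $\mathcal{A}_I$ and to the full Theorem \ref{thm: elevator Siegel} proceeds by descent along $\phi_0$ and by symmetrisation in the $d$ sheets of $f$.
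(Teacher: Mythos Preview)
Your strategy matches the paper's: localise near a general point of the product locus, test both sides against $\partial_\tau\otimes\omega_t$-type data, and invoke the limit formula \eqref{eqn: limit formula}. The one substantive difference is that you try to impose $v$-horizontality on the test family $\omega_t$ so that the Leibniz correction $\langle \nu_Z,\nabla_v\omega_t\rangle$ drops out, whereas the paper takes $\omega$ to be an arbitrary $C^\infty$-section of the sub bundle $(\mathcal{H}^{1,1}_{\prim})^I$ (defined over the whole neighbourhood, not just the boundary) and keeps both Leibniz terms, applying the limit formula separately to each. For the second term the paper uses $\lim_{t\to 0}\nabla_{\partial_\tau}\omega_t=(2\pi i)^{-1}(\nabla_{\partial_\tau}\eta_0)\wedge dz/z$ together with Remark \ref{remark: limit formula}(2), and then reassembles the two limits into $\langle\delta\nu_{Z_I},\partial_\tau\otimes\eta_0\rangle$.

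Your shortcut has a snag: the Gauss--Manin connection does not preserve $F^1$, so you cannot simultaneously demand $\omega_t\in F^1H^2(X_t)_{\prim}$ and $\nabla_v\omega_t=0$; a flat section starting in $H^{1,1}$ will generically acquire an $H^{0,2}$-component under parallel transport in the $v$-direction. This is repairable---drop the $F^1$ requirement and work with flat sections of $\mathcal{H}^2$, appealing to Remark \ref{remark: limit formula}(2) throughout for the KLM-current pairing---but as written the construction is inconsistent. The paper's Leibniz-rule approach is precisely what sidesteps this issue: by not demanding horizontality, it may keep $\omega$ inside $(\mathcal{H}^{1,1}_{\prim})^I$ over the whole neighbourhood, so that the geometric residue identification of Lemma \ref{prop: Siegel Hodge geometric II} applies directly at $t=0$. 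One further small omission: to assert that $\delta\nu_Z^+|_{S_I}$ lands in the sub bundle $(H^1\mathcal{K}_2)^I=(\mathcal{H}^{1,1}_{\prim})^I\otimes\Omega^1(\log\Delta_I)^I$ you need Lemma \ref{lem: cusp reduction cover}, not merely the representation-theoretic identifications of \S\ref{sec: Siegel Koszul}.
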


\begin{proof}
Since the problem is local, we may work in a neighborhood of a general point of $S_{I}$. 
Since $S'$ is etale over $U'$, 
we may pass to a general point $p$ of ${\AItilde}$, 
and assume that the cycle family is defined on a neighborhood of $p$ in $\mathcal{A}_{{\G}}'$. 

Let $(\tau, w, t)$ be the local coordinates around $p$ induced by the Siegel domain realization \eqref{eqn: Siegel domain}, 
where $\tau$ is coordinates on the cusp ${\D}_{I}$ (the upper half plane), 
$w$ is fiber coordinates of ${\VI}\to {\D}_{I}$, and 
$t$ is fiber coordinates of $\overline{{\D}/U(I)_{{\Z}}}\to {\VI}$. 
In particular, the boundary divisor $\Delta_{I}$ is defined by $t=0$. 
Let $(\tau_{0}, w_{0}, 0)$ be the coordinates of $p$. 
By a change of variable with $w$ if necessary, 
we may assume that ${\AItilde}\subset \Delta_{I}$ is locally defined by $w=w_0$ around $p$, 
i.e., locally the curve $\tau\mapsto (\tau, w_{0}, 0)$. 
It suffices to prove 
\begin{equation*}
\lim_{t\to 0}\delta\nu_{Z}^{+}(\tau_{0}, w_{0}, t) = \delta\nu_{Z_{I}}(\tau_{0}, w_{0}, 0). 
\end{equation*}
%

By Proposition \ref{prop: Sigel Koszul g=2},  
$\Phi_{I}H^1\mathcal{K}_{2}$ is identified with   
$\Phi_{I}\mathcal{H}^{1,1}_{{\rm prim}}\otimes \Phi_{I}\Omega_{{\D}}^{1}$ 
and hence 
\begin{equation*}
(H^1\mathcal{K}_{2})^{I} = (\mathcal{H}^{1,1}_{{\rm prim}})^{I} \otimes \Omega^{1}(\log \Delta_{I})^{I}.
\end{equation*} 
By Lemma \ref{lem: Siegel cotangent}, 
$\Omega^{1}(\log \Delta_{I})^{I}$ is generated by $d\tau$ and hence 
the vector field $\partial_{\tau}=\frac{\partial}{\partial \tau}$ is a dual section of $\Omega^{1}(\log \Delta_{I})^{I}$. 
By Lemma \ref{prop: Siegel Hodge geometric II}, 
the fiber of $(\mathcal{H}^{1,1}_{{\rm prim}})^{I}$ over $p$ is identified with 
$H^{1,0}(B)\wedge (d\bar{z}/\bar{z})$ 
where $B\times Q_0$ is the degenerate abelian surface over $p$. 
We take a dual $C^{\infty}$-section $\omega = (\omega_{t})_{t}$ of $(\mathcal{H}^{1,1}_{{\rm prim}})^{I}$. 
When $t\ne 0$, $\omega_{t}$ is represented by a primitive $(1, 1)$-form on the abelian surface $X_t$, 
while $\omega_0$ is represented by $(2\pi i)^{-1}\eta_{0}\wedge (dz/z)$ 
with $\eta_{0}$ a closed $1$-form on the elliptic curve $B$. 
Since we know that 
both $\lim_{t\to 0} \delta\nu_{Z}^{+}(\tau_0, w_0, t)$ and $\delta\nu_{Z_{I}}$ take value in $(H^1\mathcal{K}_{2})^{I}$, 
it suffices to prove the equality after taking the pairing with $\partial_{\tau}\otimes \omega$ as above. 

Since the isomorphism $(\mathcal{H}^{1,1}_{{\rm prim}})^{I}|_{\Delta_{I}}\simeq \phi^{\ast}\mathcal{H}^{1,0}_{I}$ 
is given by getting rid of $d\bar{z}/\bar{z}$ from $H^{1,0}(B)\wedge (d\bar{z}/\bar{z})$ 
and since $d\bar{z}/\bar{z}$ is dual to $(2\pi i)^{-1}dz/z$ up to constant, 
it suffices to prove  
\begin{equation}\label{eqn: equality to be proved}
\lim_{t\to 0} \, \langle \delta\nu_{Z}(\tau_{0}, w_{0}, t), \partial_{\tau}\otimes \omega_{t} \rangle 
= \langle \delta\nu_{Z_{I}}(\tau_{0}, w_{0}, 0), \partial_{\tau}\otimes \eta_{0} \rangle. 
\end{equation} 
The left hand side is equal to 
\begin{eqnarray*}
& & \lim_{t\to 0} \, \langle (\nabla_{\partial_{\tau}}\nu_{Z})(\tau_{0}, w_{0}, t), \omega_{t} \rangle \\ 
& = &  
\lim_{t\to 0} \, \{ \partial_{\tau}\langle \nu_{Z}, \omega_{t} \rangle (\tau_{0}, w_{0}, t) 
- \langle \nu_{Z}(\tau_0, w_0, t), \nabla_{\partial_{\tau}}\omega_{t} \rangle \} \\ 
& = & 
\partial_{\tau} \lim_{t\to 0} \, \langle \nu_{Z}, \omega_{t} \rangle (\tau_{0}, w_{0}, t) 
- \lim_{t\to 0} \langle \nu_{Z}(\tau_0, w_0, t), \nabla_{\partial_{\tau}}\omega_{t} \rangle. 
\end{eqnarray*}
By \eqref{eqn: limit formula}, this is equal to 
\begin{equation}\label{eqn: equality to be proved II}
\partial_{\tau} \langle \nu_{Z_{I}}, \eta_{0} \rangle (\tau_{0}, w_{0}, 0) - 
\langle \nu_{Z_{I}}(\tau_0, w_0, 0), \nabla_{\partial_{\tau}}\eta_{0} \rangle. 
\end{equation} 
Here, as for the second term, 
we used Remark \ref{remark: limit formula} (2) and the equality 
\begin{eqnarray*}
\lim_{t\to 0} \nabla_{\partial_{\tau}} \omega_{t} 
& = & 
\nabla_{\partial_{\tau}} \lim_{t\to 0}\omega_{t}  
 =  
(2\pi i)^{-1}\nabla_{\partial_{\tau}}(\eta_{0}\wedge dz/z) \\ 
& = & 
(2\pi i)^{-1}(\nabla_{\partial_{\tau}}\eta_{0})\wedge dz/z.  
\end{eqnarray*}
Finally, \eqref{eqn: equality to be proved II} is equal to 
\begin{equation*}
\langle \nabla_{\partial_{\tau}}\nu_{Z_{I}}(\tau_{0}, w_{0}, 0), \eta_{0} \rangle  =  
\langle \delta\nu_{Z_{I}}(\tau_{0}, w_{0}, 0), \partial_{\tau}\otimes \eta_{0} \rangle. 
\end{equation*}
This proves \eqref{eqn: equality to be proved}. 
\end{proof} 

A few argument remains for completing the proof of Theorem \ref{thm: elevator Siegel}.  
For $1\leq i \leq d$, let $f_{Z_{I}}^{(i)}$ be the $i$-th relative symmetric polynomial of $\delta\nu_{Z_{I}}^{+}$ 
taken along the fiber of $S_{I}\to {\AItilde}$, 
composed with ${\Sym}^{i}{\Sym}^{4}\to {\Sym}^{4i}$. 
This is a section of 
$\phi_{0}^{\ast}({\Sym}^{4i}{\E}_{I}\otimes {\LL}_{I}^{\otimes -i})$ 
over ${\AItilde}$. 
By Proposition \ref{prop: equality SI}, we have 
\begin{equation}\label{eqn: equality AItilde}
f_{Z_{I}}^{(i)} = f_{Z}^{(i)}|_{{\AItilde}} 
\end{equation}
as an equality over ${\AItilde}$. 
Since $f_{Z}^{(i)}$ is a meromorphic modular form, 
we see from Proposition \ref{prop: Siegel meromorphic} that 
$f_{Z}^{(i)}|_{{\AItilde}}$ descends from ${\AItilde}$ to ${\AI}$. 
Then the equality \eqref{eqn: equality AItilde} guarantees that 
$f_{Z_{I}}^{(i)}$ descends from ${\AItilde}$ to ${\AI}$, 
the result again denoted by $f_{Z_{I}}^{(i)}$. 
By descending the equality \eqref{eqn: equality AItilde} from ${\AItilde}$ to ${\AI}$, 
we obtain 
$\Phi_{I}f_{Z}^{(i)} = f_{Z_{I}}^{(i)}$ 
over ${\AI}$. 
This proves Theorem \ref{thm: elevator Siegel} in the case $g=2$.

\subsection{The case $g=3$}\label{ssec: g=3}

The proof of Theorem \ref{thm: elevator Siegel} in the case $g=3$ is similar to the case $g=2$. 
In this case 
$(H^1\mathcal{K}_{3})^{I}$ no longer coincides with  
$(\mathcal{H}^{1,2}_{{\rm prim}})^{I} \otimes \Omega^{1}(\log \Delta_{I})^{I}$,  
but is still realized as its sub vector bundle  
by Proposition \ref{prop: Siegel 3 to 2}. 
The cusp ${\D}_{I}$ is now the $3$-dimensional Siegel upper half space, 
so we use $3$-variable coordinates $(\tau_{ij})_{i,j}$ instead of a single variable $\tau$. 
Then we replace $\partial_{\tau}\otimes \omega$ by 
$\partial_{\tau_{ij}}\otimes \omega$ as the test dual sections where now 
$\omega_{t}\in H^{2,1}_{{\rm prim}}(X_t)$ 
converging to $(2\pi i)^{-1}\eta_{0}\wedge (dz/z)$ 
with $\eta_{0}\in H^{1,1}_{{\rm prim}}(B)$.  
After this modification, the proof of Proposition \ref{prop: equality SI} is valid in the case $g=3$. 
The step deducing Theorem \ref{thm: elevator Siegel} from Proposition \ref{prop: equality SI} 
remains the same.


\end{document}